\def\R{\mathbb{R}}
\newcommand{\Rmnum}[1]{\expandafter\@slowromancap\romannumeral #1@}
\newcommand{\D}{\displaystyle}
\newtheorem{thm}{Theorem}[section]
\newcommand{\norm}[1]{\left\lVert#1\right\rVert}
\newtheorem{lemma}[thm]{Lemma}
\newtheorem{theorem}[thm]{Theorem}
\newtheorem{proposition}[thm]{Proposition}
\begin{document}

\author{Qingqing Liu}
\address{
School of Mathematics, South China University of Technology,
Guangzhou, 510641, P. R. China} \email{maqqliu@scut.edu.cn}

\author{Hongyun Peng}
\address{School of Mathematics and Statistics, Guangdong University of Technology, Guangzhou, 510006,
China} \email{penghy010@163.com}

\author{Zhi-an Wang*}
\address{Corresponding Author,
Department of Applied Mathematics, Hong Kong Polytechnic University,
Hung Hom, Kowloon, Hong Kong} \email{mawza@polyu.edu.hk}

\title[Asymptotic Stability of a quasi-linear hyperbolic-parabolic model]{Convergence to  nonlinear diffusion waves for a  hyperbolic-parabolic chemotaxis system modelling vasculogenesis}

\begin{abstract}
\normalsize{In this paper, we are concerned with a quasi-linear hyperbolic-parabolic system of persistence and endogenous chemotaxis modelling vasculogenesis in $\mathbb{R}$. Under some suitable structural assumption on the pressure function, we first predict the system admits a nonlinear diffusion wave in $\mathbb{R}$ based on the empirical results in the literature. Then we show that the solution of the concerned system will locally and asymptotically converges to this nonlinear diffusion wave if the wave strength is small. By using the time-weighted energy estimates, we further prove that the convergence rate of the nonlinear diffusion wave is algebraic.}

\vspace*{4mm}
\noindent{\sc MSC (2020).} 35A01, 35B40, 35B45, 35K57, 35Q92, 92C17

\vspace*{1mm}
 \noindent{\sc Keywords. }Chemotaxis, hyperbolic-parabolic system, diffusion wave, asymptotic stability
 
\end{abstract}



\maketitle

\numberwithin{equation}{section}
\bigbreak
\section{Introduction}

In order to depict the key characteristics of the {\it in vitro} experiment of blood vessels, showing that the cells randomly scattered on the gel matrix will automatically organize into a network of connected blood vessels,  Ambrosi et al. in \cite{ambrosi2005review, Gamba} proposed the following quasi-linear hyperbolic-parabolic  chemotaxis  system
\begin{eqnarray}\label{HPV1}
	&&\left\{\begin{aligned}
		& \partial_{t}\rho+\nabla\cdot(\rho u)=0,\\
		&\partial_t(\rho u)+\nabla\cdot(\rho u\otimes u)+\nabla p(\rho)=\mu \rho\nabla \phi-\alpha \rho u,\\
		&\partial_t \phi=D\Delta \phi+a \rho-b \phi.
	\end{aligned}\right.
\end{eqnarray}
Here the unknowns $\rho=\rho(x,t)\geq 0 $ and $
u=u(x,t)\in \mathbb{R}^{n} (n\geq 1)$ denote the density and velocity of endothelial  cells, respectively, and  $\phi=\phi(x,t)\geq 0$ denotes the concentration of the chemoattractant secreted by the endothelial  cells. The convection term $\nabla \cdot (\rho u \otimes u)$ models the cell movement persistence (inertial effect), $p(\rho)$ is the cell-density dependent pressure function accounting for the fact that closely packed cells resist to compression due to the impenetrability of cellular matter, the parameter $\mu>0$ measures the intensity of cell response to the chemoattractant concentration gradient and $-\alpha \rho u$ corresponds to a damping (friction) force with coefficient $\alpha>0$ as a result of the interaction between cells and the underlying substratum; $D>0$ is the diffusivity of the chemoattractant, the positive constants $a$ and $b$ denote the secretion and death rates of the chemoattractant, respectively.

The hyperbolic or hyperbolic-parabolic chemotaxis models with different structures from \eqref{HPV1} have been studied in the literature (cf. \cite{HLWW,LW2009,Perthame-book, MWZ}). Nevertheless the mathematical structure of \eqref{HPV1} is analogous to the well-known damped Euler-Poisson system where $\phi$ is called the flow potential satisfying a Poisson (elliptic) equation: $-\Delta \phi= \rho$, which has numerous essential applications such as propagation of electrons in semiconductor devices (cf. \cite{markowich2012semiconductor}) and the transport of ions in plasma physics (cf. \cite{choudhuri1998physics}) when $\mu<0$ and $\alpha\geq0$, as well as the collapse of gaseous stars due to self-gravitation \cite{chandrasekhar1957introduction} when $\mu>0$ and $\alpha=0$. Here the sign of $\mu$ corresponds to attractive or repulsive forces similar to the attractive or repulsive chemotaxis (cf. \cite{Jin-W}). Due to the essential difference of analysis between elliptic and parabolic equations, the analytical tools developed for the damped Euler-Poisson system is not directly applicable to the system \eqref{HPV1}. Up to date, there are not many analytical results available to the system \eqref{HPV1}.
When the initial value $({\rho}_{0}(x), {u}_{0}(x), {\phi}_0(x)) \in H^s(\R^n) (s>n/2+1)$  is a small perturbation of the constant ground state (i.e. equilibrium) $({\rho}_c, 0, {\phi}_c)$ with  ${\rho}_{c}>0$ sufficiently small, it was shown in \cite{Russo12, Russo13} that the system \eqref{HPV1} admits global strong solutions without vacuum  converging to $({\rho}_c, 0, {\phi}_c)$ in $L^2$-norm  with an algebraic decay rate. As $\alpha\rightarrow \infty$ (strong damping), it was formally derived in \cite{chavanis2007kinetic} by the asymptotic analysis and subsequently justified in \cite{FD} that the solution of \eqref{HPV1} converges to that of a parabolic-elliptic Keller-Segel type chemotaxis system. The asymptotic behavior of solutions to \eqref{HPV1} and its limiting Keller-Segel system was further compared numerically in \cite{NRT2}. By adding a viscous term $\Delta u $ to the second equation of \eqref{HPV1}, the linear stability of the constant ground state $({\rho}_c, 0, {\phi}_c)$ was obtained in \cite{kowalczyk2004stability} under the condition
\begin{equation}\label{pressure}
bp'({\rho}_c)- a \mu {\rho}_c>0.
\end{equation}
A typical form of $p$ fulfilling \eqref{pressure} is $ p(\rho) = \frac{K}{2}\rho ^{2}$ with $ K > \frac{a \mu}{b}$.
The stationary solutions of one dimensional \eqref{HPV1} with vacuum (bump solutions) in a bounded interval with zero-flux boundary condition were constructed in \cite{berthelin2016stationary, carrillo2019phase, NRT1}. The model \eqref{HPV1} with $p(\rho)=\rho$ and periodic boundary conditions in one dimension was numerically explored in \cite{filbet2005approximation}. Recently the stability of transition layer stationary solutions of \eqref{HPV1} on $\R_+=[0,\infty)$ was established in \cite{HPWZ-JLMS}.

It is well known that damping is a factor triggering diffusion waves in many hyperbolic system such as the $p$-system or damped Euler equations (cf. \cite{HL,JZ1,JZ2,Mei1,Mei2, MM,N1,N2,Nishihara2,NY,ZJ} without vacuum and \cite{DP,HMP, MM-JDE, Pa} with vacuum), as well as the  Euler-Poisson system of semiconductors \cite{GHL}, bipolar Euler-Maxwell equation \cite{DLZ1}, Timoshenko system \cite{IK} and the radiating gas model \cite{LK}.
Motivated by the structural analogue between the Euler-Poisson system and \eqref{HPV1} with dampings, the authors have shown recently in \cite{LPW} that the hyperbolic-parabolic system \eqref{HPV1} admits linear diffusion waves in $\mathbb{R}^3$ which are locally asymptotically stable by the Fourier and spectral analysis. Under the framework of \cite{LPW}, the time decay of solutions decreases with respect to the space dimension and the energy estimates will lose time integrability in $\mathbb{R}^n (n=1,2)$. Therefore whether the hyperbolic-parabolic chemotaxis system \eqref{HPV1} admits stable diffusion waves in $\R$ or $\mathbb{R}^2$ remains unknown. The purpose of this paper is to show that the system \eqref{HPV1} in $\R$ admits nonlinear diffusion waves which are stable against a small perturbation by using the technique of taking antiderivative, unlike the framework of \cite{LPW}.

To demonstrate our ideas, we  set $m=\rho u$ for convenience, namely $m$ denotes the momentum of cells, and recast the system (\ref{HPV1}) for $(x,t)\in \R\times \R^+$  as
\begin{equation}\label{va1d}
\left\{\begin{array}{ll}
			 \rho_t +m_x =0, & (x,t)\in \R\times \R^+,\\[1mm]
			m_t +\big(\frac{m^2}{\rho}+p(\rho) \big)_x= \mu \rho\phi_x-\alpha m, & (x,t)\in \R\times \R^+,\\[1mm]
			\phi_t=D\phi_{xx}+a\rho-b\phi, & (x,t)\in \R\times \R^+,
\end{array}\right.
\end{equation}
where we prescribe the initial data
\begin{eqnarray}\label{initial}
	U|_{t=0}=U_{0}:=(\rho_{0},m_{0},\phi_{0})\longrightarrow
	(\rho_{\pm},0,\phi_{\pm}),\ \ \ \ \ \ \ \mathrm{as}\ \ \ x\rightarrow
	\pm\infty,
\end{eqnarray}
with $\rho_{\pm}>0, \ \ \rho_{+}\neq \rho_{-}$ and $ \phi_{\pm}=\frac{a}{b}\rho_{\pm}$, where we assume $\rho_{\pm}>0$ to avoid possible vacuum. Moreover we impose the following conditions on the pressure function
\begin{equation}\label{assumpp}
	p(\rho)\in C^{3}(\mathbb{R}^{+}),\ \ \ p'(\rho)-\frac{a\mu}{b}\rho>0, \ \ \mathrm{for\ any}\ \rho >0.
\end{equation}
Due to the external damping (frictional) force, one may expect that the inertial term in the momentum equation of \eqref{va1d} decays to zero faster than other terms so that the pressure gradient force is balanced by the frictional force plus the potential force.  Hence we may predict that the solution $(\rho, m, \phi)$ of  \eqref{va1d}-\eqref{initial} will behave time asymptotically as the solution $(\bar{\rho}, \bar{m}, \bar{\phi})$ to the following equations
\begin{equation}\label{va1d1}
	\begin{cases}
		\begin{split}
			& \bar{\rho}_t +\bar{m}_x =0, \\
			&(p(\bar{\rho}) )_x= \mu \bar{\rho}\bar{\phi}_x-\alpha\bar{m},\\
			&a\bar{\rho}=b\bar{\phi},
		\end{split}
	\end{cases}
\end{equation}
or equivalently, by denoting $q(\bar{\rho})=p(\bar{\rho})-\frac{a\mu}{2b}\bar{\rho}^2$,
\begin{subequations}\label{va1d1rh}
\begin{numcases}
	\/\bar{\rho}_t-\Big(\frac{1}{\alpha}q(\bar{\rho})\Big)_{xx}=0, \label{va1d1rh1}\\
			\bar{m}=-\frac{1}{\alpha}\left[p(\bar{\rho})-\frac{a\mu}{2b}\bar{\rho}^2\right]_{x}, \label{va1d1rh2}\\
			\bar{\phi}=\frac{a}{b}\bar{\rho}, \label{va1d1rh3}
\end{numcases}
\end{subequations}
with the following asymptotic states at far fields
\begin{equation}\label{barrho}
	(\bar{\rho}(x,t),\bar{m}(x,t),\bar{\phi}(x,t)) \longrightarrow \left (\rho_{\pm},0,\frac{a}{b}\rho_{\pm}\right), \ {\rm as} \ x\rightarrow \pm \infty.
\end{equation}
In our paper, without loss of generality, we assume $\rho_{-}<\rho_{+}$. As shown in \cite{Duyn}, the equation  $\eqref{va1d1rh1}$ admits  unique nonlinear diffusion wave
$\bar{\rho}(x,t)=\varphi(\xi)$ with $\xi=\frac{x}{\sqrt{1+t}}$ under the condition \eqref{assumpp} such that $\varphi(\xi)=\rho_{\pm}$. Then substituting this into  $\eqref{va1d1rh2}$ and $\eqref{va1d1rh3}$, we find a unique solution $(\bar{\rho}, \bar{m},\bar{\phi})(x,t)$ of diffusion wave for $\eqref{va1d1rh}$ with
\begin{equation}\label{dw}
\bar{\rho}(x,t)=\varphi\bigg(\frac{x}{\sqrt{1+t}}\bigg), \ \bar{m}(x,t)=-\frac{1}{\alpha}\left[p(\bar{\rho})-\frac{a\mu}{2b}\bar{\rho}^2\right]_{x}, \ \bar{\phi}(x,t)=\frac{a}{b}\bar{\rho}.
\end{equation}

The aim of this paper is to show if the initial value $(\rho_0, m_0, \phi_0)$ satisfying \eqref{initial} is a small perturbation of $(\bar{\rho}(x,0), \bar{m}(x,0), \bar{\phi}(x,0)$,  then the system \eqref{va1d} with \eqref{assumpp} admits a unique solution whose asymptotic profile is the nonlinear diffusion wave $(\bar{\rho}, \bar{m},\bar{\phi})(x,t)$ given by \eqref{dw}. To state our main results, we first define the perturbation of $(\bar{\rho}(x,0), \bar{m}(x,0), \bar{\phi}(x,0)$ as follows
	\begin{equation}\label{pert}
	\left\{	\begin{array}{lll}
				V_{0}(x)={\normalsize \int_{-\infty}^x}(\rho_{0}(y)-\bar{\rho}(y+x_0,0))dy, \\[1mm]
				M_{0}(x)=m_{0}(x)-\bar{m}(x+x_0,0), \\[1mm]
				\Phi_{0}(x)=\phi_{0}(x)-\bar{\phi}(x+x_0,0),
\end{array}\right.
	\end{equation}	
	where $x_0$ is a constant uniquely determined (see section 2.2)  such 	that the initial perturbation from the spatially shifted diffusion waves with shift $x_0$ is of integral zero, namely $V_0(\infty)=0$. Above we define $V_{0}(x)$ in a form of anti-derivative of the perturbation because the first equation of \eqref{va1d} is a conservation law for which the technique of taking anti-derivative is usually invoked (cf. \cite{smoller}). By the method of weighted energy estimates, we shall prove the following results in this paper.
\begin{theorem}\label{1-1}
	Let \eqref{assumpp} hold. Then there exists a constant $\epsilon>0$, such that if $(V_{0},M_{0},\Phi_{0})\in H^3(\R)\times H^2(\R)\times H^4(\R)$ satisfies
	\begin{equation*}
		\begin{split}
		\norm{V_{0}}^2_{H^3(\R)}+\norm{M_{0}}^2_{H^2(\R)}+\norm{\Phi_{0}}^2_{H^4(\R)}+|\rho_{+}-\rho_{-}|\leq \epsilon^2,
		\end{split}
	\end{equation*}
	where $(V_{0},M_{0},\Phi_{0})$ is defined in \eqref{pert}, the system \eqref{va1d}-\eqref{initial} possesses a unique global classical solution
	$(\rho, m, \phi)(x, t)$ which converges to the shifted diffusion wave $(\bar{\rho}, \bar{m}, \bar{\phi})(x+x_0,t)$ solving \eqref{va1d1rh} and \eqref{barrho} in $L^\infty(\R)$ with algebraic decay rates:
	\begin{equation}\label{infinitydecay}
		\begin{split}
			\|\partial_{x}^{k}(\rho-\bar{\rho})(t)\|_{L^{\infty}(\R)}&\le C \epsilon(1+t)^{-3 / 4-k/2}, \  k=0,1,\\
			\|\partial_{x}^{k}(m-\bar{m})(t)\|_{L^{\infty}(\R)}&\le C\epsilon(1+t)^{-5 / 4-k/2},\  k=0,1, \\
			\|\partial_{x}^{k}(\phi-\bar{\phi})(t)\|_{L^{\infty}(\R)}&\le C\epsilon(1+t)^{-3 / 4-k/2},\  k=0,1,
		\end{split}
	\end{equation}
where $C>0$ is a constant independent of $t$.
\end{theorem}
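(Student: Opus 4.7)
The plan is to follow the anti-derivative framework of Hsiao-Liu and Nishihara for damped hyperbolic conservation laws, adapted to the hyperbolic-parabolic chemotactic coupling here. First I would introduce the perturbation
\[
v(x,t)=\int_{-\infty}^{x}\bigl(\rho(y,t)-\bar\rho(y+x_0,t)\bigr)dy,\ \ w=m-\bar m(\cdot+x_0,t),\ \ \psi=\phi-\bar\phi(\cdot+x_0,t);
\]
the shift $x_0$ in \eqref{pert} is exactly what makes $v(\pm\infty,t)=0$. Since $(\bar\rho,\bar m)$ satisfies $\bar\rho_t+\bar m_x=0$ exactly by \eqref{va1d1rh}, the conservation law yields the clean identity $v_t+w=0$. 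Substituting $\rho=\bar\rho+v_x$, $m=\bar m+w$, $\phi=\bar\phi+\psi$ into \eqref{va1d} and subtracting the profile equations produces
\[
v_t+w=0,\ \ w_t+\alpha w+\bigl(A(\bar\rho)v_x\bigr)_x-\mu\bar\rho\,\psi_x=R_1+N_1,\ \ \psi_t-D\psi_{xx}+b\psi-av_x=R_2,
\]
where $A(\bar\rho):=p'(\bar\rho)-(a\mu/b)\bar\rho>0$ by \eqref{assumpp}. The remainders $R_1=-\bar m_t$ and $R_2=D\bar\phi_{xx}-\bar\phi_t$ are linear in derivatives of the self-similar profile, so by scaling $\norm{\partial_x^k R_j(t)}_{L^2}\lesssim\abs{\rho_+-\rho_-}(1+t)^{-3/4-k/2}$; the nonlinearity $N_1$ gathers $(w^2/\rho)_x$, the Taylor remainder of $p(\rho)-p(\bar\rho)$, and the product $\mu v_x\psi_x$.

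Next, standard hyperbolic-parabolic local theory gives a unique classical solution in the stated Sobolev classes on some $[0,T_*)$; by continuation it suffices to derive a uniform a priori bound
\[
N(T)^{2}:=\sup_{0\le t\le T}\bigl(\norm{v(t)}_{H^3}^2+\norm{w(t)}_{H^2}^2+\norm{\psi(t)}_{H^4}^2\bigr)\le C\epsilon^2
\]
under the smallness hypothesis $N(T)\le\epsilon_0\ll 1$. The base $L^2$ step is to test the $w$-equation against $w$ and combine it with $A(\bar\rho)v_x$ multiplied against $v_t+w=0$; after integration by parts the hyperbolic energy $E_0=\tfrac12\int(A(\bar\rho)v_x^2+w^2)\,dx$ satisfies $\tfrac{d}{dt}E_0+\alpha\int w^2\,dx=\mu\int\bar\rho\,\psi_x\,w\,dx+(\text{source}+\text{nonlinearity})$. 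Parallel parabolic estimates for $\psi$, obtained by testing the $\psi$-equation against $\psi$, $-\psi_{xx}$, $\psi_{xxxx}$ up to fourth order, furnish time-integrated control of $\norm{\psi_x}_{H^2}^2$, which absorbs the cross-term via Young's inequality against the damping. The same pattern, differentiated up to top order, closes the estimate in $H^3\times H^2\times H^4$; the asymmetric regularity reflects that the top-order $v$-equation involves $\partial_x^3\psi$, which costs one additional derivative in the parabolic energy.

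To promote this uniform bound to the algebraic decay rates in \eqref{infinitydecay}, I would carry out a time-weighted hierarchy, multiplying each spatially differentiated equation by an appropriate power of $(1+t)$ in the spirit of Nishihara and Matsumura-Nishihara. Because $\norm{\partial_x^k R_j(t)}_{L^2}^2\lesssim\epsilon^2(1+t)^{-3/2-k}$, the corresponding source integrals remain bounded, and the nonlinearities are super-quadratic in $N(T)$ and so are swallowed by the dissipation. An induction on the derivative order yields $(1+t)^{k+1}\norm{\partial_x^{k+1} v(t)}_{L^2}^2+(1+t)^{k+2}\norm{\partial_x^k w(t)}_{L^2}^2+(1+t)^{k+1}\norm{\partial_x^k\psi(t)}_{L^2}^2\le C\epsilon^2$, and the one-dimensional interpolation $\norm{f}_{L^\infty(\R)}^2\le 2\norm{f}_{L^2}\norm{f_x}_{L^2}$ converts these $L^2$ rates into the pointwise estimates of \eqref{infinitydecay}.

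The main obstacle I foresee is closing the $L^2$ energy against the chemotactic cross-coupling $\mu\bar\rho\,\psi_x$. In the analogous damped Euler-Poisson setting an elliptic equation ties $\phi$ to $\rho$ pointwise and permits instantaneous diagonalization; here $\phi$ obeys a parabolic equation, so $\psi$ and $v_x$ are only asymptotically proportional through the equilibrium $a\rho=b\phi$. The cleanest resolution is to introduce the deviation-from-equilibrium variable $\eta:=\psi-(a/b)v_x$, whose equation carries faster-decaying sources; the coupling then splits as $\mu\bar\rho\psi_x=\mu\bar\rho\bigl((a/b)v_{xx}+\eta_x\bigr)$, and combining the first piece with the linearized pressure term produces exactly $(A(\bar\rho)v_x)_x$, while the $\eta_x$ remainder is absorbed by the parabolic dissipation in $\eta$. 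The structural assumption \eqref{assumpp} enters precisely here to render the resulting quadratic form positive-definite; balancing the time-weight exponents against the interplay of the damping $\alpha w^2$, the parabolic dissipation $D\psi_x^2$, and the slowly decaying profile derivatives is the other delicate technical point.
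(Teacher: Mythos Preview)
Your overall architecture---anti-derivative variable, reformulated perturbation system, local existence plus a-priori bounds, then a time-weighted hierarchy \`a la Nishihara, and finally the Sobolev interpolation $\|f\|_{L^\infty}^2\le 2\|f\|\|f_x\|$---is exactly the route the paper takes. The divergence is entirely in how you propose to neutralize the linear cross-coupling $\mu\bar\rho\,\psi_x$ versus $a\,v_x$, and there your plan has a genuine gap.

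First, a small inconsistency: the linearized momentum equation carries $(p'(\bar\rho)v_x)_x$, not $(A(\bar\rho)v_x)_x$; the reduced coefficient $A=p'-\frac{a\mu}{b}\bar\rho$ can only appear \emph{after} you have already traded part of the chemotactic term, so writing both $(A(\bar\rho)v_x)_x$ and $-\mu\bar\rho\psi_x$ in the same line double-counts. More importantly, your proposed fix via $\eta=\psi-\frac{a}{b}v_x$ does not yield an equation with ``faster-decaying sources'': a direct computation gives
\[
\eta_t-D\eta_{xx}+b\eta=\frac{Da}{b}\,v_{xxx}+\frac{a}{b}\,w_x+R_2,
\]
so the $\eta$-equation is forced by one \emph{extra} spatial derivative of the hyperbolic unknown at every level. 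This creates an upward cascade in the hierarchy rather than a closed estimate. If instead you try to absorb the remaining cross-terms $\mu\!\int\!\bar\rho\,v_x\eta$ and $\frac{a}{b}\!\int\! w_x\eta$ by Young's inequality, closing requires a quantitative bound of the type $\mu^2\bar\rho^2<4b\,q'(\bar\rho)$, which is strictly stronger than \eqref{assumpp} (take $p'(\rho)=\frac{a\mu}{b}\rho+\delta$ with $\delta$ small to see this fail). Your first suggestion---using parabolic dissipation of $\psi_x$ to swallow $\mu\!\int\!\bar\rho\psi_x w$---has the same defect: testing the $\psi$-equation by $\psi$ leaves a source $a\!\int\! v_x\psi$, and the hyperbolic damping supplies time-integrability of $w^2$, not of $v_x^2$, so the loop does not close without an additional mechanism.

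The paper resolves this not by changing variables but by a \emph{symmetrizing multiplier}: at each order $k$ it pairs $\partial_x^k$ of the $V$-equation tested against $\partial_x^k V$ (respectively $\partial_x^k V_t$) with $\partial_x^k$ of the $\Phi$-equation tested against $\frac{\mu\bar\rho}{a}\partial_x^k\Phi$ (respectively $\frac{\mu\bar\rho}{a}\partial_x^k\Phi_t$). The weight $\frac{\mu\bar\rho}{a}$ turns the $aV_x$ source in the $\Phi$-equation into $\mu\bar\rho V_x$, so the two cross-terms become identical and combine into the quadratic form
\[
\int\Big(p'(\bar\rho)(\partial_x^k V_x)^2-2\mu\bar\rho\,\partial_x^k V_x\,\partial_x^k\Phi+\frac{b\mu\bar\rho}{a}(\partial_x^k\Phi)^2\Big)dx,
\]
which is positive definite precisely under \eqref{assumpp}. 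This simultaneously yields dissipation of $V_x$ and $\Phi$ at the same derivative level, with no loss and no extra structural hypothesis; the time-weighted argument then proceeds as you outline. If you want to salvage your route, the cleanest repair is to adopt this weighted pairing in place of the $\eta$-substitution.
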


The rest of the paper is organized as follows. In Section \ref{sec2},  we present some known results on the diffusion wave solution $\bar{\rho}(x,t)$ of \eqref{va1d1rh} with far field states \eqref{barrho}, and reformulate the original equation \eqref{va1d} against a suitable perturbation. In Section \ref{sec3}, we derive the uniform {\it a-priori} estimates and hence establish the existence of global solutions of reformulated problem. In Section \ref{sec4}, we show the algebraic time asymptotic rate of solutions convergent to the nonlinear diffusion wave and prove the main Theorem \ref{1-1}.

\section{Reformulation of the problem}\label{sec2}
In this section, we shall prove the global existence of solutions to \eqref{va1d1}. We first introduce some notations frequently throughout the paper.

\medskip
\noindent{\it Notations.}  In the sequel, $C$ denotes a generic positive  constant where $C$ may vary in the context. For two
quantities $a$ and $b$, $a\sim b$ means $\lambda a \leq  b \leq
\frac{1}{\lambda} a $ for some constant $0<\lambda<1$. For any
integer $m\geq 0$, we use $H^{m}$ to denote the usual
Sobolev space $H^{m}(\mathbb{R})$. For simplicity, the norm of $ H^{m}$ is denoted by
$\|\cdot\|_{m} $ with $\|\cdot \|=\|\cdot\|_{0}$, and we set
\begin{equation*}
	\|[A,B]\|_{X}=\|A\|_{X}+\|B\|_{X}.
\end{equation*}
Without confusion, we shall abbreviate $\|\cdot\|_{L^p(\R)}$ as $\|\cdot\|_{L^p}$ for $1\leq p\leq \infty$ in the sequel.
\subsection{Nonlinear diffusion waves} In this subsection, we give some properties of nonlinear diffusion wave profile \eqref{dw} that will be used in the paper.
Let
\begin{equation*}
	\bar{\rho}(x,t)=\varphi \left(\frac{x}{\sqrt{1+t}}\right)=\varphi(\xi),\ \ \ \ \ -\infty<\xi<\infty.
\end{equation*}
Substituting the above self-similar structure form into  $\eqref{va1d1rh1}$, we find that  $\varphi(\xi)$ satisfies
\begin{equation*}
	\begin{cases}
		\begin{split}
			&\varphi''(\xi)+\frac{\frac{\alpha}{2}\xi+q''(\varphi(\xi))\varphi'(\xi)}{q'(\varphi(\xi))}\varphi'(\xi)=0,\\[2mm]
			&\varphi(\pm\infty)=\rho_{\pm},
		\end{split}
	\end{cases}
\end{equation*}
where $q'(\bar{\rho})=p'(\bar{\rho})-\frac{a\mu}{b}\bar{\rho}>0$.  The existence of unique solution of the above equations has been shown in \cite{HL}. For any $\xi_{0}$, it further follows that
\begin{equation*}
	\varphi'(\xi)=\frac{ \varphi'(\xi_{0})q'( \varphi(\xi_{0}))}{q'( \varphi(\xi))}\exp\left(-\int_{\xi_{0}}^{\xi}\frac{\alpha \eta}{2q'( \varphi(\eta))} d\eta\right).
\end{equation*}
The solution $\varphi(\xi)$ is increasing if $\rho_{-}<\rho_{+}$ and decreasing if $\rho_{-}>\rho_{+}$, and satisfies
\begin{equation*}
	\sum_{k=1}^{6}\left|\frac{d^{k}}{d\xi^{k}}\varphi(\xi)\right|+|\varphi(\xi)-\rho_{+}|_{\xi>0}+|\varphi(\xi)-\rho_{-}|_{\xi<0}\leq C|\rho_{+}-\rho_{-}|\exp(-c\alpha \xi^2),
\end{equation*}
where $c$ is a positive constant independent of $x$ and $t$.  Moreover, we can obtain following $L^p$-estimates of the derivatives of $\bar{\rho}$ (cf. \cite{HMP, HP1, ZCJ, ZJ}).
\begin{lemma}\label{prodecay}
	Let $\bar{\rho}(x,t)$ be the self-similar  solution of $\eqref{va1d1rh1}$ and \eqref{barrho}. Then for $p\in [2,+\infty]$ and $1\leq l+k\leq 6$, we have
	\begin{equation*}
		\norm{\partial_{t}^{l}\partial_{x}^{k}\bar{\rho}(\cdot,t)}_{L^{p}}\leq C |\rho_{+}-\rho_{-}|(1+t)^{-\frac{k}{2}-l+\frac{1}{2p}}.
	\end{equation*}
\end{lemma}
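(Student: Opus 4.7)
The plan is to exploit the self-similar ansatz $\bar{\rho}(x,t)=\varphi(\xi)$ with $\xi=x/\sqrt{1+t}$ together with the pointwise Gaussian-type bound on $\varphi$ and its derivatives already displayed in the excerpt. Essentially everything follows from the chain rule, a change of variables in the integral, and (for time derivatives) reinvesting in the PDE \eqref{va1d1rh1}. The main obstacle is not any hard analytic estimate but rather the bookkeeping for mixed derivatives when $q$ is nonlinear.

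First I treat the pure spatial case $l=0$, $1\le k\le 6$. By the chain rule,
\begin{equation*}
\partial_x^k \bar{\rho}(x,t)=(1+t)^{-k/2}\varphi^{(k)}(\xi).
\end{equation*}
Using the pointwise estimate $|\varphi^{(k)}(\xi)|\le C|\rho_+-\rho_-|\exp(-c\alpha\xi^2)$ given in the excerpt and changing variables $x\mapsto\xi$ (Jacobian $\sqrt{1+t}$) gives
\begin{equation*}
\|\partial_x^k\bar{\rho}(\cdot,t)\|_{L^p}^p
=(1+t)^{-pk/2}\sqrt{1+t}\int_{\mathbb{R}}|\varphi^{(k)}(\xi)|^p\,d\xi
\le C|\rho_+-\rho_-|^p (1+t)^{-pk/2+1/2},
\end{equation*}
and the endpoint $p=\infty$ follows directly from the pointwise bound without the Jacobian factor. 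Both cases can be combined into the stated exponent $-k/2+1/(2p)$.

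For $l\ge 1$ I trade each time derivative for two spatial derivatives using the profile equation $\bar{\rho}_t=\frac{1}{\alpha}(q(\bar{\rho}))_{xx}$. Iterating gives, by Faà di Bruno / Leibniz,
\begin{equation*}
\partial_t^l\partial_x^k\bar{\rho}=\sum_{\boldsymbol{\alpha}} C_{\boldsymbol{\alpha}}\;Q_{\boldsymbol{\alpha}}(\bar{\rho})\prod_j \partial_x^{k_j}\bar{\rho},
\end{equation*}
where each term is a product of functions of $\bar{\rho}$ (uniformly bounded on the range of $\varphi$) with spatial derivatives of $\bar{\rho}$ of total order $2l+k$. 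The self-similar bound of the preceding step shows each factor $\partial_x^{k_j}\bar{\rho}$ carries an exponentially decaying profile in $\xi$ and a factor $(1+t)^{-k_j/2}$; the product rule then yields total time-decay $(1+t)^{-l-k/2}$, while the $L^p$ integration in $x$ produces the usual Jacobian $(1+t)^{1/(2p)}$ since the product of Gaussian factors remains integrable. This delivers the claimed estimate. The only subtlety is verifying that all Leibniz-generated terms respect the same decay rate, which is immediate from the total-derivative counting $2l+k$.
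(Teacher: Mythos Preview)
The paper does not actually prove this lemma; it is stated with a citation to \cite{HMP, HP1, ZCJ, ZJ} and used as a black box. So there is no in-paper argument to compare against, and your write-up would serve as the missing proof.

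Your treatment of the pure spatial case $l=0$ is correct and is exactly the standard computation.

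For $l\ge 1$, your idea of trading each $\partial_t$ for two $\partial_x$'s via the profile equation $\bar\rho_t=\alpha^{-1}q(\bar\rho)_{xx}$ gives the right time-decay exponent, but it has a bookkeeping gap you glossed over: after $l$ iterations the expansion contains factors $\partial_x^{k_j}\bar\rho$ with individual $k_j$ as large as $2l+k$. Since the lemma ranges over $l+k\le 6$, this forces you to invoke the Gaussian bound on $\varphi^{(j)}$ for $j$ up to $12$, whereas the excerpt only records that bound for $1\le j\le 6$ (and with $p\in C^3$ one cannot simply differentiate the profile ODE six more times to recover it). The cleaner route, which stays within the stated pointwise estimate, is to differentiate the self-similar form directly: from $\partial_t\xi=-\tfrac12(1+t)^{-1}\xi$ one gets by induction
\[
\partial_t^l\partial_x^k\bar\rho(x,t)=(1+t)^{-l-k/2}\sum_{j=1}^{l+k}P_{j}(\xi)\,\varphi^{(j)}(\xi),
\]
with $P_j$ polynomials in $\xi$. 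Now only $\varphi^{(j)}$ with $j\le l+k\le 6$ appear, the polynomial prefactors are harmlessly absorbed by the Gaussian, and the $L^p$ bound follows by the same change of variables you used for $l=0$. Replacing your PDE-iteration paragraph with this direct chain-rule computation closes the gap.
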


\subsection{ Reformulation of the problem}
Inspired by the work  \cite{HL},  we set the perturbation function around the diffusion wave  $(\bar{\rho},\bar{m},\bar{\phi})$ as
\begin{equation}\label{221}
	\begin{cases}
		\begin{split}
			&V(x,t)=\textstyle \int_{-\infty}^x(\rho(y,t)-\bar{\rho}(y+x_0,t))dy,\\
			&M(x,t)=m(x,t)-\bar{m}(x+x_0,t),\\
			&\Phi(x,t)=\phi(x,t)-\bar{\phi}(x+x_0,t),
		\end{split}
	\end{cases}
\end{equation}
where $x_0$ is a constant uniquely determined by
\begin{equation*}
	\begin{split}
		\int_{-\infty}^{+\infty}(\rho(x,0)-\bar{\rho}(x+x_0,0))dx=0,
	\end{split}
\end{equation*}
namely,
$$
x_{0}=\frac{1}{\rho_{+}-\rho_{-}}\int_{-\infty}^{+\infty}\left(\rho_{0}(y)-\bar{\rho}(y,0)\right) dy.
$$
Define the initial perturbation function as
\begin{equation*}
	\begin{cases}
		&V_{0}(x)=V(x,0)=\int_{-\infty}^x(\rho_{0}(y)-\bar{\rho}(y+x_0,0))dy, \\[1mm]
		&M_{0}(x)=M(x,0)=m_{0}(x)-\bar{m}(x+x_0,0), \\[1mm]
		&\Phi_{0}(x)=\Phi(x,0)=\phi_{0}(x)-\bar{\phi}(x+x_0,0).
	\end{cases}
\end{equation*}
Then upon the  substitution of \eqref{221}, we reformulate our problem \eqref{va1d}-\eqref{initial} as
\begin{equation}\label{va1d1d}
	\begin{cases}
		\begin{split}
			&V_t+M=0, \\
			&M_t+\left(\frac{(M+\bar{m})^2}{V_x+\bar{\rho}}\right)_x+\left[p(V_x+\bar{\rho})-p(\bar{\rho})\right]_x = \mu V_x\Phi_x+\mu V_x\bar{\phi}_x+\mu\bar{\rho}\Phi_x-\alpha M-\bar{m}_t,\\[2mm]
			&\Phi_t=D\Phi_{xx}+aV_x-b\Phi-\bar{\phi}_t+D\bar{\phi}_{xx},
		\end{split}
	\end{cases}
\end{equation}
with initial data $(V_{0},M_{0},\Phi_{0})$ satisfying
\begin{equation}\label{va1d1di}
	(V(x,0),M(x,0),\Phi(x,0))=(V_{0}(x),M_{0}(x),\Phi_{0}(x)) \rightarrow 0, \ \ \ \ \mathrm{as}\ \ x\rightarrow \pm\infty.
\end{equation}
Rewrite \eqref{va1d1d}-\eqref{va1d1di} as
\begin{equation}\label{va1d1f}
	\begin{cases}
		\begin{split}
			&V_{tt}-(p'(\bar{\rho})V_x)_x+\alpha V_t+\mu V_x\Phi_x+\mu V_x\bar{\phi}_x+\mu\bar{\rho}\Phi_x+h_{x}+f_{x}=0,\\[2mm]
			&\Phi_t=D\Phi_{xx}+aV_x-b\Phi+g,
		\end{split}
	\end{cases}
\end{equation}
with initial data
\begin{equation}\label{va1d1fi}
	(V(x,0),V_{t}(x,0), \Phi(x,0))=(V_{0}(x),-M_{0}(x), \Phi_0(x)) \rightarrow 0, \ \ \ \ \mathrm{as}\ \ x\rightarrow \pm\infty,
\end{equation}
where
\begin{equation}\label{Defhf}
	\begin{split}\begin{cases}
			h=\D-\frac{(V_t+\frac{1}{\alpha}q(\bar{\rho})_x)^2}{V_x+\bar{\rho}},\\[2mm]
			f=\frac{1}{\alpha}(q(\bar{\rho}))_{t}-\left[p(V_x+\bar{\rho})-p(\bar{\rho})-p'(\bar{\rho})V_x\right],\\[2mm]
			g=-\bar{\phi}_t+D\bar{\phi}_{xx}.
\end{cases}\end{split}\end{equation}

For system \eqref{va1d1d}-\eqref{va1d1di}, we shall establish the following results.
\begin{proposition}\label{1.1}
Let $\delta_{0}=|\rho_{+}-\rho_{-}|$ and \eqref{assumpp} hold.  If $(V_{0},M_{0},\Phi_{0})\in H^3\times H^2\times H^3$ and
	\begin{equation*}
		\begin{split}
\norm{[V_{0},\Phi_{0}]}^2_{3}+\norm{M_{0}}^2_{2}+\delta_{0}\leq \varepsilon_{0}^2,
		\end{split}
	\end{equation*}
	for some sufficiently small constant $\varepsilon_0>0$, then problem  \eqref{va1d1d}-\eqref{va1d1di} admits a unique global classical solutions $(V,M,\Phi)$ satisfying
	\begin{equation*}
			(V(x, t), M(x,t), \Phi(x, t)) \in L^{\infty}\left([0, \infty) ; H^{3}\times H^2\times H^3\right),
	\end{equation*}
and
\begin{equation*}
		\sup_{t\geq 0}\norm{V(t)}_{3}^2+\norm{V_{t}(t)}_2^2+\norm{\Phi(t)}_{3}^2
		\le C\left(\norm{V_0}^2_3+\norm{M_0}^2_2+\norm{\Phi_0}^2_3+\delta_{0}\right)\leq C\varepsilon_0^2.
\end{equation*}
Moreover, if there exists a constant $\epsilon$ such that
 	\begin{equation*}
 	\begin{split}
 		\delta_{0}+\norm{V_{0}}^2_{3}+\norm{M_{0}}^2_{2}+\norm{\Phi_{0}}_{4}^2\leq \epsilon^2,
 	\end{split}
 \end{equation*}
then the solution  $(V,M,\Phi)$ has the following decay
	\begin{equation}\label{decayn}
		\begin{split}
			\|\partial_{x}^{k} V_{x}(t)\|_{L^{2}}&\le C \epsilon(1+t)^{-(k+1) / 2}, \ k=0,1,2,\\[1mm]
			\|\partial_{x}^{k} M(t)\|_{L^{2}}&\le C \epsilon(1+t)^{-(k+2) / 2}, \ k=0,1,2,\\[1mm]
			\|\partial_{x}^{k} \Phi(t)\|_{L^{2}}&\le C \epsilon(1+t)^{-(k+1) / 2}, \ k=0,1,2.
		\end{split}
	\end{equation}

where $C>0$ is a positive constant independent of $t$.
\end{proposition}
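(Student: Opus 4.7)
The plan is to establish Proposition \ref{1.1} via a continuity argument built on uniform $H^{3}\times H^{2}\times H^{3}$ energy estimates for the reformulated problem \eqref{va1d1f}--\eqref{va1d1fi}. Local existence in this class follows from standard theory for semilinear hyperbolic-parabolic coupled systems, so the substantive work is to close the a priori estimate under the bootstrap ansatz
\begin{equation*}
N(T)^{2} := \sup_{t\in[0,T]} \bigl(\|V(t)\|_{3}^{2} + \|V_{t}(t)\|_{2}^{2} + \|\Phi(t)\|_{3}^{2}\bigr) \le \varepsilon_{1}^{2}, \qquad \delta_{0} \le \varepsilon_{1},
\end{equation*}
for some sufficiently small $\varepsilon_{1}$. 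Sobolev embedding then yields $\|V_{x}\|_{L^{\infty}} \lesssim \varepsilon_{1}$, so $V_{x}+\bar{\rho}$ stays uniformly bounded away from zero and the remainders $h,f$ in \eqref{Defhf} are genuinely quadratic small in the perturbation.

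The heart of the argument is the lowest-order combined estimate for \eqref{va1d1f}, and this is also where I expect the main obstacle. Multiplying the $V$-equation by $V_{t}$ and by a small multiple of $V$ produces the hyperbolic energy $V_{t}^{2}+p'(\bar{\rho})V_{x}^{2}$ together with the damping dissipation $\alpha V_{t}^{2}$, but it also generates the cross coupling $\mu\bar{\rho}\Phi_{x} V_{t}$, which cannot be absorbed by damping alone. Multiplying the $\Phi$-equation simultaneously by $\Phi$ and by $V_{x}$ (plus $\Phi_{xx}$ where needed for parabolic dissipation), and integrating by parts in space and in time, allows the chemotactic source $aV_{x}$ in the $\Phi$-equation to pair against $\mu\bar{\rho}\Phi_{x}$ in the $V$-equation. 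After the dust settles the effective coefficient in front of $V_{x}^{2}$ collapses from $p'(\bar{\rho})$ to
\begin{equation*}
p'(\bar{\rho}) - \frac{a\mu}{b}\bar{\rho} \;=\; q'(\bar{\rho}) \;>\; 0,
\end{equation*}
the positivity being precisely the structural assumption \eqref{assumpp}. This subcharacteristic-type cancellation is what rescues coercivity, and identifying the correct combination of multipliers so that no top-order term survives without a favourable sign is the delicate step on which everything downstream rests.

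Higher-order estimates are then obtained by applying $\partial_{x}^{k}$ to \eqref{va1d1f} for $k=1,2,3$ and repeating the combined energy step at each level. The commutators $[\partial_{x}^{k},p'(\bar{\rho})]\partial_{x}V$ and $[\partial_{x}^{k},\bar{\rho}]\partial_{x}\Phi$ produce lower-order terms whose $\bar{\rho}$-derivatives are small in $L^{\infty}$ and $L^{2}$ by Lemma \ref{prodecay} together with the smallness of $\delta_{0}$, while the nonlinear remainders from $h_{x},f_{x}$ are cubic or better by \eqref{Defhf} and absorb into the leading energy via the bootstrap hypothesis. Summing over $k$ yields
\begin{equation*}
N(T)^{2} + \int_{0}^{T}\bigl(\|V_{t}\|_{2}^{2} + \|V_{x}\|_{2}^{2} + \|\Phi\|_{3}^{2} + \|\Phi_{x}\|_{3}^{2}\bigr)\,dt \;\le\; C\bigl(\|V_{0}\|_{3}^{2} + \|M_{0}\|_{2}^{2} + \|\Phi_{0}\|_{3}^{2} + \delta_{0}\bigr),
\end{equation*}
which closes the bootstrap for $\varepsilon_{1}$ small enough and establishes the first, global existence, part of the proposition.

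The algebraic decay \eqref{decayn} is obtained by a time-weighted refinement of the same hierarchy: multiplying the $k$-th level energy identity by $(1+t)^{k+1}$ and integrating in time, while absorbing the inhomogeneous forcing through the $L^{2}$ decay $\|\partial_{x}^{k+1}\bar{\rho}(t)\|_{L^{2}} \lesssim \delta_{0}(1+t)^{-(k+1)/2+1/4}$ supplied by Lemma \ref{prodecay}. Induction on $k=0,1,2$ then delivers \eqref{decayn}. The strengthened assumption $\Phi_{0}\in H^{4}$ enters at this stage because at the top level $k=2$ the parabolic term $D\Phi_{xx}$ in the $\Phi$-equation requires one spare derivative in order to close the highest-order time-weighted $\Phi$ estimate.
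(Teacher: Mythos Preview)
Your overall architecture---local existence plus a bootstrap on $N(T)$, combined multipliers on the $V$- and $\Phi$-equations exploiting \eqref{assumpp}, then a time-weighted iteration for decay---matches the paper's. But your description of the crucial coupling step is imprecise in a way that matters, and taken literally it does not close.

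You propose to test the $\Phi$-equation with $V_{x}$ so that the source $aV_{x}$ ``pairs against $\mu\bar\rho\Phi_{x}$.'' If you actually do this, the term $aV_{x}\cdot V_{x}$ enters with the wrong sign, and the cross term you generate is $\int\Phi_{x}V_{t}$ (after moving a time derivative), which has no weight $\bar\rho$ and so cannot cancel $\mu\int\bar\rho\Phi_{x}V_{t}$ from the $V$-equation exactly. The paper's mechanism is different: it tests $\partial_{x}^{k}$ of the $\Phi$-equation with the \emph{weighted} multiplier $\frac{\mu\bar\rho}{a}\partial_{x}^{k}\Phi$ (respectively $\frac{\mu\bar\rho}{a}\partial_{x}^{k}\Phi_{t}$), paired with testing the $V$-equation by $\partial_{x}^{k}V$ (respectively $\partial_{x}^{k}V_{t}$). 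With that weight the two cross terms match exactly, and what survives is the quadratic form
\[
\int_{\R}\Bigl(p'(\bar\rho)(\partial_{x}^{k}V_{x})^{2}-2\mu\bar\rho\,\partial_{x}^{k}\Phi\,\partial_{x}^{k}V_{x}+\tfrac{b\mu}{a}\bar\rho(\partial_{x}^{k}\Phi)^{2}\Bigr)dx,
\]
which is uniformly coercive in $(\partial_{x}^{k}V_{x},\partial_{x}^{k}\Phi)$ because the matrix $\bigl(\begin{smallmatrix}p'(\bar\rho)&-\mu\bar\rho\\-\mu\bar\rho&b\mu\bar\rho/a\end{smallmatrix}\bigr)$ is positive definite under \eqref{assumpp}; see \eqref{repeatu}. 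Your heuristic ``the coefficient collapses to $q'(\bar\rho)$'' is the determinant condition for this matrix, but the estimate lives at the level of the full quadratic form, not a reduced scalar coefficient. So the right correction is: replace ``multiply $\Phi$-equation by $\Phi$ and by $V_{x}$'' with ``multiply by $\frac{\mu\bar\rho}{a}\Phi$ and $\frac{\mu\bar\rho}{a}\Phi_{t}$,'' and state coercivity via the $2\times2$ matrix rather than a collapsed coefficient.

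One further point on the decay step: the paper does not merely multiply by $(1+t)^{k+1}$ at level $k$. It first runs the two-multiplier combination $(\partial_{x}^{k}V,\partial_{x}^{k}V_{t})$ with weight $(1+t)^{k}$ to put $\int_{0}^{t}(1+s)^{k}\|\partial_{x}^{k}[V_{x},\Phi]\|^{2}ds$ into the budget, and only then multiplies the $V_{t}$-identity alone by $(1+t)^{k+1}$; the extra power is recovered from the dissipation already banked. The inhomogeneity $g=-\bar\phi_{t}+D\bar\phi_{xx}$ decays only like $(1+t)^{-3/4}$ in $L^{2}$, which is too slow to absorb directly at the lowest level, so the paper integrates $\int\bar\rho\Phi_{t}g$ by parts in time to trade $g$ for $g_{t}$. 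The need for $\Phi_{0}\in H^{4}$ arises not from $D\Phi_{xx}$ at level $k=2$ but from the refined estimates on $\partial_{xt}\Phi$ in Lemma~\ref{sec.le4.4}, which require one more spatial derivative on the $\Phi$-equation than the basic hierarchy.
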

In view of \eqref{221}, Theorem \ref{1-1} is a direct consequence of Proposition \ref{1.1}. Hence we will focus on the proof of Proposition \ref{1.1}. Before proceeding,  we briefly outline the ideas of proving Proposition \ref{1.1} where part of them are inspired from works \cite{N1,Nishihara2}.
First we establish the global existence of smooth solutions to \eqref{va1d1d}-\eqref{va1d1di}, and
then we derive time decay rates of the solution toward diffusion waves. Although such procedures are routine, the desired results are not easy to be achieved due to the coupling of $\Phi$ and $V$. Since we can not expect the exponential decay of $\Phi$ like the electronic field $E$ as in \cite{GHL} or do not want to
impose smallness assumption on  the constant equilibrium as in \cite{Russo13} either,  some new ideas need to be developed in order to control the linear terms $\mu\bar{\rho}\Phi_x$ in the first equation of  \eqref{va1d1f} and  $aV_{x}$ in the second equation of \eqref{va1d1f}. To  this end,  we take up the assumption \eqref{assumpp} which implies that the following matrix
\begin{equation*}
	\left(
	\begin{array}{cc}
		p'(\bar{\rho}) & -\mu\bar{\rho} \\[3mm]
		-\mu\bar{\rho}  & \frac{b\mu \bar{\rho}}{a}
	\end{array}
	\right)=:A(\bar{\rho}),
\end{equation*}
is positive definite. Then for any $(x_{1}, x_{2})$, we have
$$
\min\{\lambda_{1}(\bar{\rho}),\lambda_{2}(\bar{\rho})\}(x_{1}^2+x_{2}^2)\leq p'(\bar{\rho})x_{1}^2-2\mu\bar{\rho}x_{1}x_{2}+\frac{b\mu \bar{\rho}}{a}x_{2}^2\leq \max\{\lambda_{1}(\bar{\rho}),\lambda_{2}(\bar{\rho})\}(x_{1}^2+x_{2}^2),
$$
where $\lambda_{1}(\bar{\rho})>0$ and $\lambda_{2}(\bar{\rho})>0$ are the eigenvalues of  $A(\bar{\rho})$. Since  $\rho_{-}<\bar{\rho}<\rho_{+}$,  there exist two constants $C_{1}>0$ and $C_{2}>0$ such that
\begin{equation}\label{repeatu}
	C_{1}(x_{1}^2+x_{2}^2)\leq p'(\bar{\rho})x_{1}^2-2\mu\bar{\rho}x_{1}x_{2}+\frac{b\mu \bar{\rho}}{a}x_{2}^2\leq C_{2}(x_{1}^2+x_{2}^2).
\end{equation}
Under the condition \eqref{repeatu}, the two linear terms $\mu\bar{\rho}\Phi_x$ and $aV_{x}$ can be absorbed or eliminated in the energy estimates. Due to the coupling effect, we  find that the decay properties of $\Phi$ and $V_{x}$  can be obtained synchronously.  Moreover the term $g$ in the second equation of \eqref{va1d1f} will complicated the weighted energy estimates in the sequel.

 For later use, we recall a Sobolev inequality about the $L^p$ estimate on products of any two or several terms with the sum of the order of their derivatives equal to a given integer (cf. \cite{DRZ}).
\begin{lemma}\label{lemma3.1}
	Let  $\alpha^{1}=\left(\alpha_{1}^{1}, \cdots, \alpha_{n}^{1}\right)$ and  $\alpha^{2}=\left(\alpha_{1}^{2}, \cdots, \alpha_{n}^{2}\right)$  be two multi-indices with
	$\Big|\alpha^{1}\Big|=k_{1},\left|\alpha^{2}\right|=k_{2}$  and set $k=k_{1}+k_{2}$. Then, for $1 \leq p, q, r \leq \infty$ with  $1 / p=1 / q+1 / r$, we have
\begin{eqnarray}\label{sobin}
		\big\|\partial^{\alpha^{1}} u_{1} \partial^{\alpha^{2}} u_{2}\big\|_{L^{p}{(\mathbb{R}^{n})}} \leq C\Big(\big\|u_{1}\big\|_{L^{q}(\mathbb{R}^{n})}\big\|\nabla^{k} u_{2}\big\|_{L^{r}(\mathbb{R}^{n})}+\big\|u_{2}\big\|_{L^{q}(\mathbb{R}^{n})}\big\|\nabla^{k} u_{1}\big\|_{L^{r}(\mathbb{R}^{n})}\Big),
	\end{eqnarray}
	where $C$ is a positive constant.
\end{lemma}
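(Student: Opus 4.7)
The plan is a standard three-step argument: Hölder's inequality, Gagliardo–Nirenberg interpolation, and Young's inequality. This is a well-known Moser-type product estimate (attributed in the paper to \cite{DRZ}), so I will outline the skeleton and flag the one mildly subtle point rather than grind calculations.

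First, I would split the $L^p$ norm of the product by Hölder's inequality. Choose exponents $p_1,p_2\in[1,\infty]$ by the Gagliardo–Nirenberg-compatible formula
$$
\frac{1}{p_i}=\frac{k-k_i}{k}\cdot\frac{1}{q}+\frac{k_i}{k}\cdot\frac{1}{r},\qquad i=1,2.
$$
Summing and using the hypotheses $k_1+k_2=k$ and $1/p=1/q+1/r$ gives $1/p_1+1/p_2=1/p$, so Hölder yields
$$
\|\partial^{\alpha^1}u_1\,\partial^{\alpha^2}u_2\|_{L^p(\mathbb{R}^n)} \le \|\partial^{\alpha^1}u_1\|_{L^{p_1}(\mathbb{R}^n)}\|\partial^{\alpha^2}u_2\|_{L^{p_2}(\mathbb{R}^n)}.
$$

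Second, I would apply the standard Gagliardo–Nirenberg interpolation inequality to each factor with parameter $\theta_i=k_i/k\in[0,1]$, which is exactly the parameter compatible with the $p_i$ chosen above:
$$
\|\partial^{\alpha^i}u_i\|_{L^{p_i}} \le C\,\|u_i\|_{L^q}^{1-\theta_i}\,\|\nabla^k u_i\|_{L^r}^{\theta_i},\qquad i=1,2.
$$
Multiplying the two bounds and regrouping using $\theta_1+\theta_2=1$ puts the right-hand side in the geometric-mean form $A^{\theta_2}B^{\theta_1}$ with $A=\|u_1\|_{L^q}\|\nabla^k u_2\|_{L^r}$ and $B=\|u_2\|_{L^q}\|\nabla^k u_1\|_{L^r}$.

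Third, Young's inequality with conjugate exponents $1/\theta_2$ and $1/\theta_1$ (valid since both lie in $(0,1)$ whenever $0<k_i<k$) converts the geometric mean into $\theta_2 A+\theta_1 B$, which is exactly the right-hand side of \eqref{sobin} up to the absolute constant $C$. The only points that require separate attention are the degenerate cases $k_1=0$ or $k_2=0$, which I would dispatch directly: then one factor carries no derivatives and a single application of Hölder with the exponent pair $(q,r)$ reproduces one of the two terms on the right-hand side without any interpolation. Endpoint exponents $p,q,r\in\{1,\infty\}$ are absorbed by the standard endpoint versions of Gagliardo–Nirenberg. I do not anticipate any genuine obstacle; the main item of care is the bookkeeping of the exponents $p_1,p_2,\theta_1,\theta_2$ to confirm their admissibility in the Gagliardo–Nirenberg inequality, which is forced by the identity $k_1+k_2=k$.
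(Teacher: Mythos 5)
The paper states Lemma \ref{lemma3.1} without proof, simply quoting it from \cite{DRZ}, so there is no in-paper argument to compare against; your H\"older--Gagliardo--Nirenberg--Young scheme is precisely the standard derivation of this Moser-type product estimate and it is correct. The exponent bookkeeping checks out: each $1/p_i$ is a convex combination of $1/q$ and $1/r$ (hence $p_i\in[1,\infty]$ and $1/p_1+1/p_2=1/p$), the interpolation parameter $\theta_i=k_i/k$ sits at the admissible lower endpoint of Gagliardo--Nirenberg, the regrouping via $1-\theta_1=\theta_2$ yields exactly $A^{\theta_2}B^{\theta_1}$, and you correctly dispose of the degenerate cases $k_1=0$ or $k_2=0$ by a single H\"older application.
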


\section{Global existence. Proof of Proposition \ref{1.1}}\label{sec3}
The existence of local-in-time solutions to \eqref{va1d1f}-\eqref{va1d1fi} can be readily established by the standard iteration argument and hence will be stated without proof details.
\begin{proposition}[Local existence]\label{local}
Let the conditions of Proposition \ref{1.1} hold.  Then there exists a positive constant $T_{0}$ depending on $\varepsilon_{0}$ such that the  problem \eqref{va1d1d}-\eqref{va1d1di} admits a unique solution $(V(x, t), M(x,t), \Phi(x, t)) \in L^{\infty}\left([0, T_0) ; H^{3}\times H^2\times H^3\right)$
satisfying
$$
\sup _{t \in\left[0, T_{0}\right]}\left(\|V\|_{3}^{2}+\|M\|_{2}^{2}+\|\Phi\|_{3}^{2}\right) \leq 2 \varepsilon_{0}^2.
$$
\end{proposition}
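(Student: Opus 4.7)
The plan is a standard Picard iteration for the quasilinear hyperbolic-parabolic system \eqref{va1d1d}--\eqref{va1d1di}, closed by a short-time contraction. The key observation is that after linearization around an iterate, the $(V,M)$-block is a linear symmetric-hyperbolic system with damping and the $\Phi$-block is a linear parabolic equation with absorption $-b\Phi$; both are classical and the usual theory applies once the smallness of $\varepsilon_0$ is used to keep the coefficient $p'(\bar\rho+V_x)$ strictly positive and bounded.

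First I would set up the iteration: extend the data constantly in $t$ to $(V^0,M^0,\Phi^0)\equiv(V_0,M_0,\Phi_0)$ and define $(V^{n+1},M^{n+1},\Phi^{n+1})$ on $[0,T_0]\times\mathbb{R}$ as the unique solution of
\begin{align*}
V^{n+1}_t + M^{n+1} &= 0,\\
M^{n+1}_t + p'(\bar\rho+V^n_x)\,V^{n+1}_{xx} + \alpha M^{n+1} &= \mathcal{F}^n,\\
\Phi^{n+1}_t - D\Phi^{n+1}_{xx} + b\Phi^{n+1} &= aV^n_x + \mathcal{G}^n,
\end{align*}
where $\mathcal{F}^n,\mathcal{G}^n$ collect the remaining nonlinear and source terms from \eqref{va1d1d} evaluated at the $n$-th iterate and at the background wave $(\bar\rho,\bar m,\bar\phi)$. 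Linear existence of $(V^{n+1},M^{n+1})\in C([0,T_0];H^3\times H^2)$ follows from the standard energy-method construction for linear damped wave equations with coefficients in $C([0,T_0];H^2)\hookrightarrow L^\infty$, and $\Phi^{n+1}\in C([0,T_0];H^3)$ follows from the heat semigroup $e^{t(D\partial_x^2-b)}$ via Duhamel's formula.

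The heart of the argument is the uniform induction bound. Setting $\mathcal{E}^k(t):=\|V^k(t)\|_3^2+\|M^k(t)\|_2^2+\|\Phi^k(t)\|_3^2$ and assuming $\sup_{[0,T_0]}\mathcal{E}^n\le 2\varepsilon_0^2$, I perform weighted $H^3\times H^2\times H^3$ energy estimates on the linearized system, multiplying the $M$-equation by $M^{n+1}$ and its $x$-derivatives up to order two, absorbing the linear couplings $\mu\bar\rho\,\Phi_x$ and $aV_x$ via the quadratic-form coercivity \eqref{repeatu}, and using the dampings $-\alpha M^{n+1}$ and $-b\Phi^{n+1}$ together with Lemma \ref{lemma3.1} to control the nonlinear contributions in $\mathcal{F}^n,\mathcal{G}^n$. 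This yields
\begin{equation*}
\mathcal{E}^{n+1}(t)\le C\bigl(\varepsilon_0^2+\delta_0\bigr)+C\int_0^t\bigl(\mathcal{E}^{n+1}(s)+\mathcal{E}^n(s)\bigr)\,ds,\qquad t\in[0,T_0],
\end{equation*}
from which, by Grönwall's inequality and choosing $T_0=T_0(\varepsilon_0)$ sufficiently small, I deduce $\sup_{[0,T_0]}\mathcal{E}^{n+1}\le 2\varepsilon_0^2$, closing the induction.

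The contraction step is standard: subtracting the schemes for consecutive iterates and denoting $\delta Z^n:=Z^{n+1}-Z^n$ for $Z\in\{V,M,\Phi\}$, the difference system is again linear hyperbolic-parabolic, with inhomogeneous terms Lipschitz in $\delta Z^{n-1}$; the analogous energy estimate at one lower regularity $H^2\times H^1\times H^2$ (where differences of the quasilinear coefficient no longer require top-order derivatives) produces a contraction factor $CT_0$, so after possibly shrinking $T_0$ the iterates form a Cauchy sequence in $C([0,T_0];H^2\times H^1\times H^2)$. Interpolating against the uniform high-regularity bound, the limit $(V,M,\Phi)$ lies in $L^\infty([0,T_0];H^3\times H^2\times H^3)$, solves \eqref{va1d1d}--\eqref{va1d1di}, and satisfies $\sup_{[0,T_0]}\mathcal{E}\le 2\varepsilon_0^2$ by weak-$*$ lower semicontinuity; uniqueness follows from the same Lipschitz estimate. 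The main technical obstacle is the quasilinear principal part $p'(\bar\rho+V_x)V_{xx}$: closing the top-order estimate requires Moser-type commutator control via Lemma \ref{lemma3.1}, and the non-vacuum condition $V_x+\bar\rho>0$ must be preserved throughout, which is guaranteed by $\|V_x\|_{L^\infty}\lesssim\|V\|_3\lesssim\varepsilon_0$. Because the construction is entirely classical at this local stage, the authors rightly omit the details.
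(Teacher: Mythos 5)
The paper itself gives no proof of Proposition \ref{local} (the authors state it ``without proof details'' as a standard iteration), so the only real question is whether your fleshed-out iteration closes at the stated regularity. In spirit it does, and it is exactly the construction the authors have in mind: linearize the principal part, prove a uniform induction bound, contract at one lower regularity, and preserve non-vacuum via $\|V_x\|_{L^\infty}\lesssim\|V\|_3\lesssim\varepsilon_0$.

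However, the scheme as you have written it loses a derivative. You keep only $p'(\bar{\rho}+V^n_x)V^{n+1}_{xx}$ on the left and relegate everything else, ``evaluated at the $n$-th iterate,'' to $\mathcal{F}^n$. But the momentum equation also contains the inertial flux $h_x$ with $h=-\frac{(M+\bar{m})^2}{V_x+\bar{\rho}}$ (cf.\ \eqref{Defhf}), whose expansion \eqref{va1dh} contains $M_x$ and $V_{xx}$. If that term sits inside $\mathcal{F}^n$, the $H^2$ bound on $\mathcal{F}^n$ that your induction inequality requires forces $\partial_x^3 M^n\in L^2$ and $\partial_x^4 V^n\in L^2$, i.e.\ $M^n\in H^3$ and $V^n\in H^4$, which $\mathcal{E}^n$ does not control; hence $\mathcal{E}^{n+1}(t)\le C(\varepsilon_0^2+\delta_0)+C\int_0^t(\mathcal{E}^{n+1}+\mathcal{E}^n)\,ds$ does not follow from the scheme as stated. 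Note that the paper's own a priori estimates avoid precisely this by never measuring $\partial_x^k h_x$ in $L^2$: the top-order pieces $\frac{(V_t+\frac{1}{\alpha}q(\bar{\rho})_x)^2}{(V_x+\bar{\rho})^2}\partial_x^kV_{xx}$ and $\frac{2(V_t+\frac{1}{\alpha}q(\bar{\rho})_x)}{V_x+\bar{\rho}}\partial_x^kV_{xt}$ are integrated by parts into time derivatives of energy-like quantities (see \eqref{JH}). For the iteration you must preserve that structure: either linearize the full second-order principal part, keeping $\frac{(M^n+\bar{m})^2}{(V^n_x+\bar{\rho})^2}V^{n+1}_{xx}$ and $\frac{2(M^n+\bar{m})}{V^n_x+\bar{\rho}}M^{n+1}_x$ at level $n+1$ alongside $p'(\bar{\rho}+V^n_x)V^{n+1}_{xx}$, or---cleaner---run the iteration on the original first-order quasilinear system \eqref{va1d} in $(\rho,m)$ (a damped $p$-system, symmetrizable since $p'>0$ by \eqref{assumpp} with $\mu>0$, so the Kato--Majda theory applies directly) coupled to the semilinear heat equation, and recover $V$ by integrating $\rho-\bar{\rho}$. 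With that repair the rest of your argument (Gr\"onwall, contraction in $H^2\times H^1\times H^2$, weak-$*$ passage to the limit, uniqueness) is standard and correct, modulo the cosmetic point that obtaining the precise factor $2\varepsilon_0^2$ requires the energy functional in the estimate to be equal to, not merely equivalent to, the Sobolev norms.
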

To extend local solutions to be global in time, it suffices to derive the uniform {\it a priori} estimates of solutions to \eqref{va1d1f}-\eqref{va1d1fi} where $V_t=-M$.
For any given $T>0$, we denote the solution space for the Cauchy problem \eqref{va1d1f}-\eqref{va1d1fi} by
\begin{equation*}
X(T)=\left\{(V,V_{t},\Phi)\in H^3\times H^2\times H^3, \ 0\leq t\leq T\right\}.
\end{equation*}
Assume that the following  {\it a priori} assumption holds:
\begin{equation}\label{priori}
N(T)=\sup_{0<t<T}\left\{\|[V,\Phi](\cdot,t)\|_{3}+\|V_{t}(\cdot,t)\|_{2}\right\}\ll 1.
\end{equation}
By the Sobolev inequality and $\rho_{-}<\bar{\rho}<\rho_{+}$, it follows from \eqref{priori} that
\begin{equation}\label{bound1}
\frac{1}{2}\rho_{-}\leq V_{x}+\bar{\rho}\leq \frac{3}{2}\rho_{+}.
\end{equation}

The we prove the following uniform {\it a priori} estimate.
\begin{proposition}[{\it A priori} estimate]\label{mainpro}
Let the conditions of Proposition \ref{1.1} hold and $(V,V_{t},\Phi)\in
X(T)$ be a smooth solution of \eqref{va1d1f}-\eqref{va1d1fi}. Then there exists a constant $C>0$ independent of $t$ such that
\begin{equation}\label{f1}
\begin{split}
&\norm{V}_{3}^2+\norm{V_{t}}_2^2+\norm{\Phi}_{3}^2+\int_0^T\left(\norm{V_{x}}_2^2+\norm{\Phi}_{3}^2+\norm{\Phi_{t}}_2^2+\norm{V_{t}}_2^2\right)dt\\[2mm]
\le&C\left(\norm{V_0}^2_3+\norm{M_0}^2_2+\norm{\Phi_0}^2_3+\delta_{0}\right)\leq C\varepsilon_0^2.
\end{split}
\end{equation}
\end{proposition}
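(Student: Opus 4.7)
The crux of the argument is to design the multipliers so that the two dangerous coupling terms $\mu\bar{\rho}\Phi_x$ and $aV_x$ in \eqref{va1d1f} combine into the positive-definite quadratic form of \eqref{repeatu}. The first step I would carry out is to test the first equation of \eqref{va1d1f} against $V$ and the second equation against the weight $\frac{\mu\bar{\rho}}{a}\Phi$, and then add the resulting identities. After integration by parts, $\int\mu\bar{\rho}\Phi_x V\,dx$ becomes $-\int\mu\bar{\rho}V_x\Phi\,dx$ (up to an $O(\bar{\rho}_x)$ commutator that decays by Lemma \ref{prodecay}), and the source $-aV_x\cdot\frac{\mu\bar{\rho}}{a}\Phi$ adds another $-\int\mu\bar{\rho}V_x\Phi\,dx$, so the total cross term is $-2\mu\int\bar{\rho}V_x\Phi\,dx$. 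Together with $\int p'(\bar{\rho})V_x^2$ coming from $-(p'(\bar{\rho})V_x)_xV$ and $\int\frac{b\mu\bar{\rho}}{a}\Phi^2$ from $+b\Phi\cdot\frac{\mu\bar{\rho}}{a}\Phi$, the left-hand side realises the quadratic form of \eqref{repeatu} in the variables $(V_x,\Phi)$, which by \eqref{repeatu} is bounded below by $C_1\int(V_x^2+\Phi^2)$. The viscous term produces the extra dissipation $D\int\frac{\mu\bar{\rho}}{a}\Phi_x^2$.

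\textbf{Damping and $\Phi_t$ dissipation.} The sum above still carries an indefinite $+\int V_t^2$ on the right-hand side arising from $\int VV_{tt}=\frac{d}{dt}\int VV_t-\int V_t^2$. I would eliminate it by adjoining a small multiple of the classical damped-wave identity obtained by testing the first equation of \eqref{va1d1f} against $V_t$; this produces $\frac{d}{dt}\int[\tfrac12 V_t^2+\tfrac12 p'(\bar{\rho})V_x^2]+\alpha\int V_t^2$, and the cross term $\mu\int\bar{\rho}\Phi_x V_t$ is controlled by Cauchy--Schwarz against the $\|\Phi_x\|^2$ dissipation already in hand. Testing the $\Phi$-equation of \eqref{va1d1f} against $\Phi_t$ generates the missing $\|\Phi_t\|^2$ dissipation via $\int\Phi_t^2+\frac{d}{dt}\int\tfrac{D}{2}\Phi_x^2=-b\int\Phi\Phi_t+a\int V_x\Phi_t+\int g\Phi_t$, whose right-hand side is absorbed by Young's inequality into the $\|V_x\|^2+\|\Phi\|^2+\tfrac12\|\Phi_t\|^2$ dissipations plus an $O(\delta_0^2)$ forcing from $\|g\|^2$. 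Combining these three zeroth-order identities with suitably small coefficients closes the $L^2$-in-space case of \eqref{f1}.

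\textbf{Higher orders and main obstacle.} For $k=1,2,3$ I would apply $\partial_x^k$ to both equations of \eqref{va1d1f} and repeat the same triple of multipliers with $(\partial_x^kV,\;\frac{\mu\bar{\rho}}{a}\partial_x^k\Phi,\;\partial_x^kV_t)$, using the third one only for $k\le 2$ since $V_t$ is sought merely in $H^2$; the top-order dissipation $\|\partial_x^3V_x\|^2$ is still produced by the matrix argument of the first paragraph applied at $k=3$. All commutators $[\partial_x^k,p'(\bar{\rho})]V_x$, $[\partial_x^k,\bar{\rho}]\Phi_x$, and cross products of $\partial_x^j\bar{\rho}$ with perturbation derivatives are handled via Lemma \ref{lemma3.1} together with the smallness and decay of $\partial_x^j\bar{\rho}$ from Lemma \ref{prodecay}. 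The main obstacle is the control of $h_x$, $f_x$ and $g$ from \eqref{Defhf}: the genuinely quadratic pieces $p(V_x+\bar{\rho})-p(\bar{\rho})-p'(\bar{\rho})V_x=O(V_x^2)$ and $(V_t+\tfrac{1}{\alpha}q(\bar{\rho})_x)^2/(V_x+\bar{\rho})$ are absorbed into $\|V_x\|_2^2+\|V_t\|_2^2$ via the a priori smallness \eqref{priori}, the lower bound \eqref{bound1}, and Sobolev embedding; the remaining linear pieces built from $\bar{\rho}_t,\bar{\rho}_x,\bar{\phi}_t,\bar{\phi}_{xx}$ have $L^p$-norms of size $O(\delta_0)$ with integrable time decay by Lemma \ref{prodecay}, so their space-time contributions are bounded by $C\delta_0$. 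Summing the identities of order $0$ through $3$ with suitably small coefficients and absorbing all $O(\varepsilon_0+\delta_0)$ weighted quantities on the left yields \eqref{f1}.
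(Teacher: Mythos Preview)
Your overall architecture is right and matches the paper: multiply the $V$--equation by $\partial_x^kV$ and the $\Phi$--equation by $\frac{\mu\bar\rho}{a}\partial_x^k\Phi$ so that the matrix in \eqref{repeatu} appears, then add a second family of identities to generate the $V_t$ and $\Phi_t$ dissipation. But two points need correction.

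\medskip
\textbf{The central gap: handling of $\mu\!\int\bar\rho\,\Phi_xV_t$.} You propose to bound this cross term by Cauchy--Schwarz against the $\|\Phi_x\|^2$ dissipation ``already in hand''. This does not close without an extra structural hypothesis. The $\|\Phi_x\|^2$ dissipation comes only from the Step~1 combination, with a fixed coefficient of order $\frac{D\mu\rho_-}{a}$, whereas the $V_t$--identity must enter with a coefficient $K$ satisfying $K\alpha>1$ (not a ``small multiple'': you need $K\alpha\|V_t\|^2$ to dominate the $+\|V_t\|^2$ coming from Step~1). After Cauchy--Schwarz the error is $\frac{K\mu^2\rho_+^2}{\alpha}\|\Phi_x\|^2$, and absorbing it forces $K<\frac{\alpha D\rho_-}{a\mu\rho_+^2}$. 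Together with $K>\frac{1}{\alpha}$ this requires $\alpha^2 D\rho_->a\mu\rho_+^2$, a condition nowhere assumed in \eqref{assumpp}. The paper avoids this entirely: in Step~2 it multiplies the $\Phi$--equation not by $\Phi_t$ but by $\frac{\mu\bar\rho}{a}\partial_x^k\Phi_t$. Then the $-aV_x$ term contributes $-\mu\!\int\bar\rho\,\partial_x^kV_x\,\partial_x^k\Phi_t$, which cancels \emph{exactly} with the $+\mu\!\int\bar\rho\,\partial_x^k\Phi_t\,\partial_x^kV_x$ produced (after integrating $\mu(\bar\rho\Phi)_x\,\partial_x^kV_t$ by parts in $x$ and then in $t$). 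The upshot is that the positive--definite quadratic form of \eqref{repeatu} appears inside a time derivative and no cross term survives; this is precisely what allows Step~1${}+{}K\cdot$Step~2 to close for any $K>\alpha^{-1}$, independent of $D,\alpha,a,\mu$. You should replace your Cauchy--Schwarz step by this weighted multiplier and cancellation.

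\medskip
\textbf{Minor issues.} First, as noted, the multiple of the damped--wave identity must be large ($K\alpha>1$), not small. Second, you propose $k=1,2,3$; in fact $k=0,1,2$ suffices (the term $\int p'(\bar\rho)(\partial_x^2V_x)^2$ already controls $\|V_{xxx}\|^2$), and $k=3$ is not admissible: the identity $\int\partial_x^3V_{tt}\,\partial_x^3V=\frac{d}{dt}\int\partial_x^3V_t\,\partial_x^3V-\|\partial_x^3V_t\|^2$ needs $V_t\in H^3$, which is outside $X(T)$.
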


\begin{proof} We divide the proof into three steps.

\noindent \textbf{ Step 1.} We claim the following inequality hold
\begin{equation}\label{Step1C}
	\begin{aligned}
		&\frac{d}{dt}\sum_{0\leq k\leq 2}\left\{\frac{\alpha}{2}\|\partial^k_{x}V\|^2+\int_{\R}\partial^k_{x}V_{t}\partial^k_{x}Vdx+\frac{\mu}{2a}\int_{\R}\bar{\rho}\left(\partial^k_{x}\Phi\right)^2dx\right\}+\frac{\mu D}{a}\sum_{0\leq k\leq 2}\int_{\R} \bar{\rho}\left(\partial^k_{x}\Phi_{x}\right)^2dx\\
		&+\sum_{0\leq k\leq 2}\left\{\int_{\R} p'(\bar{\rho})\left(\partial^k_{x}V_{x}\right)^2dx-2\mu \int_{\R}\bar{\rho}\partial^k_{x}\Phi\partial^k_{x}V_{x} dx+\frac{b\mu}{a}  \int_{\R}\bar{\rho}\left(\partial^k_{x}\Phi\right)^2dx\right\}\\[3mm]
	\leq & -\frac{\mu}{b}\frac{d}{dt}\int_{\R}\bar{\rho}_{x}\Phi Vdx+\|V_{t}\|_{2}^2+C(N(T)+\delta_{0})(\|V_{x}\|_{2}^2+\|V_{t}\|_{2}^2+\|\Phi\|_{3}^2)+C\delta_{0}(1+t)^{-\frac{5}{4}}.
\end{aligned}
\end{equation}
To this end, we rewrite \eqref{va1d1f} as
\begin{equation}\label{va1d1fxk}
\begin{cases}
\begin{split}
&V_{tt}-(p'(\bar{\rho})V_x)_{x}+\alpha V_{t}+\mu (\bar{\rho }\Phi)_{x}=-\mu V_{x}\Phi_{x}-\mu V_{x}\bar{\phi}_{x}+\mu \bar{\rho}_{x}\Phi -h_{x}-f_x,\\[2mm]
&\Phi_{t}-D\Phi_{xx}+b \Phi-aV_{x}=g.
\end{split}
\end{cases}
\end{equation}
Applying $\partial^{k}_{x}$ for $0\leq k\leq 2$ to the first equation of $\eqref{va1d1fxk}$ and multiplying the result by $\partial^k_{x}V$, one has
\begin{equation}\label{va1d-2}
\begin{split}
& \partial^{k}_{x} V_{tt}\partial^k_{x}V-\partial^{k}_{x}(p'(\bar{\rho})V_x)_{x}\partial^k_{x}V+\alpha \partial^{k}_{x}V_{t}\partial^k_{x}V+\mu \partial^{k}_{x}(\bar{\rho }\Phi)_{x}\partial^k_{x}V\\
=& -\mu \partial^k_{x}(V_{x}\Phi_{x})\partial^k_{x}V-\mu \partial^k_{x}(V_{x}\bar{\phi}_{x}- \bar{\rho}_{x}\Phi)\partial^k_{x}V -\partial^k_{x}h_{x}\partial^k_{x}V-\partial^k_{x}f_x\partial^k_{x}V.
\end{split}
\end{equation}
By simple calculations, we have
\begin{equation*}
\begin{split}
 \partial^{k}_{x}V_{tt}\partial^k_{x}V=&\frac{d}{dt}(\partial^k_{x}V_{t}\partial^k_{x}V)-|\partial^k_{x}V_t|^2,\\
 -\partial^{k}_{x}(p'(\bar{\rho})V_x)_{x}\partial^k_{x}V=&-[\partial^{k}_{x}(p'(\bar{\rho})V_x)\partial^k_{x}V]_x+\partial^{k}_{x}(p'(\bar{\rho})V_x)\partial^k_{x}V_x\\
 =&-[\partial^{k}_{x}(p'(\bar{\rho})V_x)\partial^k_{x}V]_x+p'(\bar{\rho})\left(\partial^k_{x}V_{x}\right)^2+
\sum_{\ell<k}C_{k}^{\ell}\partial^{k-\ell}_{x}(p'(\bar{\rho}))\partial_{x}^{\ell}V_{x}\partial_{x}^{k}V_{x}, \\
\mu \partial^{k}_{x}(\bar{\rho }\Phi)_{x}\partial^k_{x}V=& \mu[ \partial^{k}_{x}(\bar{\rho }\Phi)\partial^k_{x}V ]_x-\mu \partial^{k}_{x}(\bar{\rho }\Phi)\partial^k_{x}V_x \\
=& \mu[ \partial^{k}_{x}(\bar{\rho }\Phi)\partial^k_{x}V ]_x-\mu\bar{\rho}\partial^k_{x}\Phi\partial^k_{x}V_{x}-\mu \sum_{\ell<k}C_{k}^{\ell}\partial^{k-\ell}_{x}\bar{\rho}\partial^\ell_{x}\Phi\partial^k_{x}V_{x},\\
\mu \partial^k_{x}(V_{x}\bar{\phi}_{x}- \bar{\rho}_{x}\Phi)\partial^k_{x}V=& \frac{\mu}{b}\partial^k_{x}\left[\bar{\rho}_{x}\left(aV_{x}-b\Phi\right)\right]\partial^k_{x}V,
\end{split}
\end{equation*}
where we have used the relationship between $\bar{\phi}$ and $\bar{\rho}$, i.e., $\bar{\phi}=\frac{a}{b} \bar{\rho}$.
Substituting the above equality into \eqref{va1d-2} and integrating the equation over $\R$, we get
\begin{equation}\label{va1d2}
\begin{split}
&\frac{d}{dt}\left\{\frac{\alpha}{2}\|\partial^k_{x}V\|^2+\int_{\R}\partial^k_{x}V_{t}\partial^k_{x}Vdx\right\}+\int_{\R} p'(\bar{\rho})\left(\partial^k_{x}V_{x}\right)^2dx-\mu \int_{\R}\bar{\rho}\partial^k_{x}\Phi\partial^k_{x}V_{x}dx\\
=&\|\partial^k_{x}V_{t}\|^2-\sum_{\ell<k}C_{k}^{\ell}\int_{\R}\partial^{k-\ell}_{x}(p'(\bar{\rho}))\partial_{x}^{\ell}V_{x}\partial_{x}^{k}V_{x}dx+\mu \sum_{\ell<k}C_{k}^{\ell}\int_{\R}\partial^{k-\ell}_{x}\bar{\rho}\partial^\ell_{x}\Phi\partial^k_{x}V_{x}dx\\
&-\int_{\R}\mu\partial^k_{x}(V_{x}\Phi_{x})\partial^k_{x} Vdx-\frac{\mu}{b}\int_{\R}\partial^k_{x}\left[\bar{\rho}_{x}\left(aV_{x}-b\Phi\right)\right]\partial^k_{x}Vdx\\
&-\int_{\R}\partial^k_{x}h_{x}\partial^k_{x} Vdx-\int_{\R}\partial^k_{x}f_{x}\partial^k_{x}Vdx.
\end{split}
\end{equation}
Applying  $\partial^{k}_{x}$ for $0\leq k\leq 2$ to the second equation of $\eqref{va1d1fxk}$, multiplying the result by $\frac{\mu \bar{\rho}}{a}\partial^k_{x}\Phi$, and integrating the resulting equation with respect to $x$ give
\begin{equation}\label{va1dphi}
	\begin{split}
		&\frac{\mu}{2a}\frac{d}{dt}\int_{\R}\bar{\rho}\left(\partial^k_{x}\Phi\right)^2dx+\frac{\mu D}{a}\int_{\R} \bar{\rho}\left(\partial^k_{x}\Phi_{x}\right)^2dx+\frac{b\mu}{a}  \int_{\R}\bar{\rho}\left(\partial^k_{x}\Phi\right)^2dx-\mu \int_{\R}\partial^k_{x}V_{x}\bar{\rho}\partial^k_{x}\Phi dx\\
		=&\frac{\mu}{2a}\int_{\R}\bar{\rho}_{t}\left(\partial^k_{x}\Phi\right)^2 dx-\frac{\mu D}{a}\int_{\R} \bar{\rho}_{x}\partial^k_{x}\Phi_{x}\partial^k_{x}\Phi dx+\frac{\mu}{a}\int_{\R} \bar{\rho}\partial^k_{x}\Phi\partial_{x}^{k}g dx.
	\end{split}
\end{equation}
Taking summation of \eqref{va1d2} and \eqref{va1dphi} and using integration by parts for the last two terms of \eqref{va1d2}, we have
\begin{equation}\label{va1d2sum}
	\begin{split}
		&\frac{d}{dt}\left\{\frac{\alpha}{2}\|\partial^k_{x}V\|^2+\int_{\R}\partial^k_{x}V_{t}\partial^k_{x}Vdx+\frac{\mu}{2a}\int_{\R}\bar{\rho}\left(\partial^k_{x}\Phi\right)^2dx\right\}+\frac{\mu D}{a}\int_{\R} \bar{\rho}\left(\partial^k_{x}\Phi_{x}\right)^2dx\\
		&+\int_{\R} p'(\bar{\rho})\left(\partial^k_{x}V_{x}\right)^2dx-2\mu \int_{\R}\bar{\rho}\partial^k_{x}\Phi\partial^k_{x}V_{x}dx+\frac{b\mu}{a}  \int_{\R}\bar{\rho}\left(\partial^k_{x}\Phi\right)^2dx\\
		=&\|\partial^k_{x}V_{t}\|^2-\int_{\R}\mu\partial^k_{x}(V_{x}\Phi_{x})\partial^k_{x} Vdx-\frac{\mu}{b}\int_{\R}\partial^k_{x}\left[\bar{\rho}_{x}\left(aV_{x}-b\Phi\right)\right]\partial^k_{x}Vdx\\
		&+\int_{\R}\partial^k_{x}h\partial^k_{x} V_{x}dx+\int_{\R}\partial^k_{x}f\partial^k_{x}V_{x}dx+\frac{\mu}{2a}\int_{\R}\bar{\rho}_{t}\left(\partial^k_{x}\Phi\right)^2dx-\frac{\mu D}{a}\int_{\R} \bar{\rho}_{x}\partial^k_{x}\Phi_{x}\partial^k_{x}\Phi dx\\
		&+\frac{\mu}{a}\int_{\R} \bar{\rho}\partial^k_{x}\Phi\partial_{x}^{k}g dx+\sum_{\ell<k}C_{k}^{\ell}I_{k,\ell}\\
		=:&\|\partial^k_{x}V_{t}\|^2+\sum_{j=1}^{7}I_{j}+\sum_{\ell<k}C_{k}^{\ell}I_{k,\ell},
	\end{split}
\end{equation}
with
$$
I_{k,\ell}=-\int_{\R}\partial^{k-1-\ell}_{x}(p''(\bar{\rho})\bar{\rho}_{x})\partial_{x}^{\ell}V_{x}\partial_{x}^{k}V_{x}dx+\mu \int_{\R}\partial^{k-1-\ell}_{x}\bar{\rho}_{x}\partial^\ell_{x}\Phi\partial^k_{x}V_{x}dx,
$$
where the fact $\ell<k$ has been used.

When $k=0$, we estimate $I_{1}$-$I_{7}$ as follows.
By using the Cauchy-Schwartz inequality, Sobolev inequality and \eqref{priori}, $I_{1}$ can be estimated as follows,
\begin{equation*}
	I_{1}\leq C\|V\|_{L^{\infty}}\|V_{x}\|\|\Phi_{x}\|\leq CN(T)\left(\|V_{x}\|^2+\|\Phi_{x}\|^2\right).
\end{equation*}
For $I_{2}$, we have from the second equation of \eqref{va1d1fxk} and Lemma \ref{prodecay} that
\begin{equation*}
	\begin{split}
	I_{2}=&-\frac{\mu}{b}\int_{\R}\bar{\rho}_{x}\left(aV_{x}-b\Phi\right)Vdx\\
	     =&-\frac{\mu}{b}\int_{\R}\bar{\rho}_{x}(\Phi_{t}-D\Phi_{xx}-g)Vdx\\
	     =&-\frac{\mu}{b}\frac{d}{dt}\int_{\R}\bar{\rho}_{x}\Phi Vdx+\frac{\mu}{b}\int_{\R}\bar{\rho}_{xt}\Phi Vdx+\frac{\mu}{b}\int_{\R}\bar{\rho}_{x}\Phi V_{t}dx\\
	     &-\frac{\mu D}{b}\int_{\R}\bar{\rho}_{xx}\Phi_{x}Vdx-\frac{\mu D}{b}\int_{\R}\bar{\rho}_{x}\Phi_{x}V_{x}dx+\frac{\mu}{b}\int_{\R}\bar{\rho}_{x}(-\bar{\phi}_{t}+D\bar{\phi}_{xx}) Vdx\\
	     \leq &-\frac{\mu}{b}\frac{d}{dt}\int_{\R}\bar{\rho}_{x}\Phi Vdx+C\left(\|\bar{\rho}_{xt}\|^2+\|\bar{\rho}_{xx}\|^2\right)+C\|V\|_{L^{\infty}}^{2}
	     \left(\|\Phi\|^2+\|\Phi_{x}\|^2\right)\\
	     &+C\|\bar{\rho}_{x}\|_{L^{\infty}}\left(\|\Phi\|^2+\|V_{t}\|^2+\|\Phi_{x}\|^2+\|V_{x}\|^2\right)+C\|V\|\|\bar{\rho}_{x}\|_{L^{\infty}}\left(\|\bar{\phi}_{t}\|+\|\bar{\phi}_{xx}\|\right)\\
	     \leq & -\frac{\mu}{b}\frac{d}{dt}\int_{\R}\bar{\rho}_{x}\Phi Vdx +C\delta_{0}(1+t)^{-\frac{3}{2}}+CN(T)\delta_{0}(1+t)^{-\frac{5}{4}}\\
	    &+C(N(T)+\delta_{0})\left(\|\Phi\|^2+\|V_{t}\|^2+\|\Phi_{x}\|^2+\|V_{x}\|^2\right).
	\end{split}
\end{equation*}
Recall the definition of $h$ and $f$ in \eqref{Defhf} and \eqref{bound1}, we have
 \begin{equation*}
 			h\sim V_{t}^2+\bar{\rho}_{x}^2, \ \ f\sim \bar{\rho}_{t}+V_{x}^2,
\end{equation*}
 which implies that
 \begin{equation*}
 \begin{split}
I_{3}+I_{4} \leq& C\int_{\R} |V_{t}^2+\bar{\rho}_{x}^2||V_x|dx+C\int_{\R} |\bar{\rho}_{t}+V_{x}^2||V_x|dx\\
\le& C(N(T)+\delta_{0})\left(\|V_{t}\|^{2}+\|V_{x}\|^2\right)+C\delta_{0}(1+t)^{-\frac{3}{2}}.
\end{split}
 \end{equation*}
Moreover, by using Cauchy-Schwartz inequality and Lemma \ref{prodecay}, the other three terms can be estimated as
\begin{equation*}
I_{5}+I_{6}+I_{7}\leq C(N(T)+\delta_{0})\left(\|\Phi\|^{2}+\|\Phi_{x}\|^2\right)+C\delta_{0}(1+t)^{-\frac{3}{2}}.
\end{equation*}

When $1\leq k\leq 2$, we estimate $I_{1}$-$I_{7}$ term by term. By Lemma \ref{lemma3.1}, it is easy to have
\begin{equation*}
	\begin{split}
	I_{1}\leq &C\left(\|V_{x}\|_{L^{\infty}}\|\partial_{x}^{k}\Phi_{x}\|+\|\Phi_{x}\|_{L^{\infty}}\|\partial_{x}^{k}V_{x}\|\right)\|\partial_{x}^{k}V\|\\
	\leq & CN(T)\left(\|\partial_{x}^{k}\Phi_{x}\|^2+\|\partial_{x}^{k}V_{x}\|^2+\|\partial_{x}^{k}V\|^2\right)
	\end{split}
\end{equation*}
and
\begin{equation*}
	\begin{split}
		I_{2}=&-\frac{\mu}{b}\int_{\R}\partial_{x}^{k}\left[\bar{\rho}_{x}\left(aV_{x}-b\Phi\right)\right] \partial_{x}^{k}Vdx\\
		\leq & C\left(\|\bar{\rho}_{x}\|_{L^{\infty}}\|aV_{x}-b\Phi\|+\|\left(aV_{x}-b\Phi\right)\|_{L^\infty}\|\partial_{x}^{k}\bar{\rho}_{x}\|\right)\|\partial_{x}^{k}V\|\\
		\leq &C\delta_{0}\left(\|\Phi\|_{2}^2+\|V_{x}\|_{2}^2\right).
	\end{split}
\end{equation*}
For $I_{3}$, $I_{4}$, recalling the definition of $h$ and $f$ in \eqref{Defhf} and using \eqref{priori},  Cauchy-Schwartz inequality and Lemma \ref{prodecay},  we can derive that
\begin{equation*}
	I_{3}+I_{4}\le C(N(T)+\delta_{0})\left(\|V_{t}\|_{2}^{2}+\|V_{x}\|_{2}^2\right)+C\delta_{0}(1+t)^{-\frac{3}{2}-k}.
\end{equation*}
Due to the properties of $\bar{\rho}_{t}$ and $\bar{\rho}_{x}$ in Lemma \ref{prodecay}, $I_{5}$, $I_{6}$ and $I_{7}$ can be estimated as follows.
\begin{equation*}
I_{5}+I_{6}+I_{7}\leq C\delta_{0}\left(\|\partial^k_{x}\Phi\|^{2}+\|\partial^k_{x}\Phi_{x}\|^2\right)+C\delta_{0}(1+t)^{-\frac{3}{2}-k}.
\end{equation*}
In order to complete step 1, we still need to estimate the last term $I_{k,\ell}$. It follows from \eqref{sobin}, \eqref{priori} and Lemma \ref{prodecay} that
\begin{equation*}
		\begin{split}
	|I_{k,\ell}|=&\left|-\int_{\R}\partial^{k-1-\ell}_{x}(p''(\bar{\rho})\bar{\rho}_{x})\partial_{x}^{\ell}V_{x} \partial_{x}^{k}V_{x}dx+\mu \int_{\R}\partial^{k-1-\ell}_{x}\bar{\rho}_{x}\partial^\ell_{x}\Phi \partial^k_{x}V_{x}dx\right|\\
	\leq & C(\|\bar{\rho}_{x}\|_{L^{\infty}}\|\partial_{x}^{k-1}V_{x}\|+\|V_{x}\|_{L^{\infty}}\|\partial_{x}^{k-1}\bar{\rho}_{x}\|)\|\partial^kV_{x}\|\\
	&+C(\|\bar{\rho}_{x}\|_{L^{\infty}}\|\partial_{x}^{k-1}\Phi\|+\|\Phi\|_{L^{\infty}}\|\partial_{x}^{k-1}\bar{\rho}_{x}\|)\|\partial^kV_{x}\|\\
	\leq & C\delta_{0}\left(\|V_{x}\|_{2}^2+\|\Phi\|_{2}^2\right).
		\end{split}
	\end{equation*}
Then $\eqref{Step1C}$ is obtained  by substituting all the estimates $I_{1}$-$I_{7}$ and $I_{k,\ell}$ into \eqref{va1d2sum} and then taking summation over $0\leq k\leq 2$.

\medskip

{\bf{Step 2.}}\ It is easy to check that
\begin{equation}\label{Step2C}
	\begin{aligned}
			&\frac{d}{dt}\sum_{0\leq k\leq 2}\left\{\frac{1}{2}\|\partial^k_{x}V_{t}\|^2+\frac{\mu D}{2a}\int_{\R} \bar{\rho}\left(\partial^k_{x}\Phi_{x}\right)^2dx\right\}+\alpha\|V_{t}\|_{2}^2+\frac{\mu}{a}\sum_{0\leq k\leq 2}\int_{\R}\bar{\rho}\left(\partial^k_{x}\Phi_{t} \right)^2dx \\[2mm]
		&+\frac{1}{2}\frac{d}{dt}\sum_{0\leq k\leq 2}\left\{\int_{\R} p'(\bar{\rho})\left(\partial^k_{x}V_{x}\right)^2dx-2\mu \int_{\R}\bar{\rho}\partial^k_{x}\Phi \partial^k_{x}V_{x}dx+\frac{\mu b}{a}\int_{\R}\bar{\rho}\left( \partial^k_{x}\Phi\right)^2dx\right\}\\[2mm]
	\leq & \frac{1}{2}\frac{d}{dt}\sum_{0\leq k\leq 2}\left\{\int_{\R}\frac{(V_t+\frac{1}{\alpha}(q(\bar{\rho}))_x)^2}{(V_x+\bar{\rho})^2}\left(\partial^k_{x}V_{x}\right)^2dx-
\int_{\R}[p'(V_x+\bar{\rho})-p'(\bar{\rho})]\left(\partial^k_{x}V_{x}\right)^2dx\right\}\\
	&+C(\delta_{0}+N(T))\left(\|V_{x}\|_{2}^2+\|V_{t}\|_{2}^2+\|\Phi\|_{3}^2+\|\Phi_{t}\|_{2}^2\right) +C\delta_{0}(1+t)^{-\frac{3}{2}}.
	\end{aligned}
\end{equation}
Applying $\partial^{k}_{x}$ for $0\leq k\leq 2$ to the first equation of $\eqref{va1d1fxk}$ and multiplying it by $\partial^k_{x}V_{t}$, we get
\begin{equation}\label{va1d-3}
\begin{split}
& \partial^{k}_{x} V_{tt}\partial^k_{x}V_t-\partial^{k}_{x}(p'(\bar{\rho})V_x)_{x}\partial^k_{x}V_t+\alpha \partial^{k}_{x}V_{t}\partial^k_{x}V_t+\mu \partial^{k}_{x}(\bar{\rho }\Phi)_{x}\partial^k_{x}V_t\\
=& -\mu \partial^k_{x}(V_{x}\Phi_{x})\partial^k_{x}V_t-\mu \partial^k_{x}(V_{x}\bar{\phi}_{x}- \bar{\rho}_{x}\Phi)\partial^k_{x}V_t -\partial^k_{x}h_{x}\partial^k_{x}V_t-\partial^k_{x}f_x\partial^k_{x}V_t.
\end{split}
\end{equation}
Noting that
\begin{equation*}
\begin{split}
\partial^{k}_{x}V_{tt}\partial^k_{x}V_t=&\frac12\frac{d}{dt}|\partial^k_{x}V_{t}|^2,\\[2mm]
-\partial^{k}_{x}(p'(\bar{\rho})V_x)_{x}\partial^k_{x}V_t=&-\partial^{k}_{x}(p''(\bar{\rho})\bar{\rho}_xV_x)\partial^k_{x}V_t-\partial^{k}_{x}(p'(\bar{\rho})V_{xx})\partial^k_{x}V_t\\
=&-\partial^{k}_{x}(p''(\bar{\rho})\bar{\rho}_xV_x)\partial^k_{x}V_t-p'(\bar{\rho})\partial^{k}_{x}V_{xx}\partial^k_{x}V_t-\sum_{\ell<k}C_{k}^{\ell}\partial^{k-\ell}_{x}(p'(\bar{\rho}))\partial_{x}^{\ell}V_{xx} \partial_{x}^{k}V_{t}\\
=&-\left(p'(\bar{\rho})\partial^{k}_{x}V_{x}\partial^k_{x}V_t\right)_x+p''(\bar{\rho})\bar{\rho}_x\partial^{k}_{x}V_{x}\partial^k_{x}V_t+p'(\bar{\rho})\partial^{k}_{x}V_{x}\partial^k_{x}V_{xt}\\
&-\partial^{k}_{x}(p''(\bar{\rho})\bar{\rho}_xV_x)\partial^k_{x}V_t-\sum_{\ell<k}C_{k}^{\ell}\partial^{k-\ell}_{x}(p'(\bar{\rho}))\partial_{x}^{\ell}V_{xx} \partial_{x}^{k}V_{t}\\
=&-\left(p'(\bar{\rho})\partial^{k}_{x}V_{x}\partial^k_{x}V_t\right)_x+p''(\bar{\rho})\bar{\rho}_x\partial^{k}_{x}V_{x}\partial^k_{x}V_t+\frac{1}{2}\frac{d}{dt}\left(p'(\bar{\rho})(\partial^k_{x}V_{x})^2\right)\\
&-\frac{1}{2} p''(\bar{\rho})\bar{\rho}_{t}\left(\partial^k_{x}V_{x}\right)^2-\partial^{k}_{x}(p''(\bar{\rho})\bar{\rho}_xV_x)\partial^k_{x}V_t-\sum_{\ell<k}C_{k}^{\ell}\partial^{k-\ell}_{x}(p'(\bar{\rho}))\partial_{x}^{\ell}V_{xx} \partial_{x}^{k}V_{t},
\end{split}
\end{equation*}
\begin{equation*}
\begin{split}
\mu \partial^{k}_{x}(\bar{\rho }\Phi)_{x}\partial^k_{x}V_t=& \mu[ \partial^{k}_{x}(\bar{\rho }\Phi)\partial^k_{x}V_t ]_x-\mu \partial^{k}_{x}(\bar{\rho }\Phi)\partial^k_{x}V_{xt} \\
=& \mu[ \partial^{k}_{x}(\bar{\rho }\Phi)\partial^k_{x}V_t ]_{x}-\mu\bar{\rho}\partial^k_{x}\Phi\partial^k_{x}V_{xt}
-\mu\sum_{\ell<k}C_{k}^{\ell}\partial^{k-\ell}_{x}\bar{\rho}\partial^\ell_{x}\Phi\partial^k_{x}V_{xt}\\
=& \mu[ \partial^{k}_{x}(\bar{\rho }\Phi)\partial^k_{x}V_t ]_{x}-\mu\frac{d}{dt}\left(\bar{\rho}\partial^k_{x}\Phi\partial^k_{x}V_{x}\right)+\mu\bar{\rho}_t\partial^k_{x}\Phi\partial^k_{x}V_{x}+\mu\bar{\rho}\partial^k_{x}\Phi_t\partial^k_{x}V_{x}\\
&-\mu\sum_{\ell<k}C_{k}^{\ell}\left(\partial^{k-\ell}_{x}\bar{\rho}\partial^\ell_{x}\Phi\partial^k_{x}V_{t}\right)_{x}+\mu\sum_{\ell<k}C_{k}^{\ell}\left(\partial^{k-\ell}_{x}\bar{\rho}_{x}\partial^\ell_{x}\Phi\partial^k_{x}V_{t}+\partial^{k-\ell}_{x}\bar{\rho}\partial^\ell_{x}\Phi_{x}\partial^k_{x}V_{t}\right),\\
\mu \partial^k_{x}(V_{x}\bar{\phi}_{x}- \bar{\rho}_{x}\Phi)\partial^k_{x}V_t
=& \frac{\mu}{b}\partial^k_{x}\left[\bar{\rho}_{x}\left(aV_{x}-b\Phi\right)\right]\partial^k_{x}V_t.
\end{split}
\end{equation*}
Substituting the above equalities into \eqref{va1d-3} and integrating the equation over $\R$, we have
\begin{equation}\label{va1d2vt}
	\begin{split}
		&\frac{d}{dt}\left\{\frac{1}{2}\|\partial^k_{x}V_{t}\|^2+\frac{1}{2}\int_{\R} p'(\bar{\rho})\left(\partial^k_{x}V_{x} \right)^2dx-\mu \int_{\R}\bar{\rho}\partial^k_{x}\Phi \partial^k_{x}V_{x}dx\right\}+\alpha\|\partial^k_{x}V_{t}\|^2+\mu\int_{\R} \bar{\rho}\partial^k_{x}\Phi_{t} \partial^k_{x}V_{x}dx\\[2mm]
		=&\frac{1}{2}\int_{\R} p''(\bar{\rho})\bar{\rho}_{t}\left(\partial^k_{x}V_{x}\right)^2dx-\int_{\R} p''(\bar{\rho})\bar{\rho}_{x}\partial^k_{x}V_{x}\partial^k_{x}V_{t}dx+\int_{\R} \partial_{x}^{k}\left(p''(\bar{\rho})\bar{\rho}_{x}V_{x}\right)\partial^k_{x}V_{t}dx\\[2mm]
		&+\sum_{\ell<k}C_{k}^{\ell}\int_{\R}\partial^{k-\ell}_{x}(p'(\bar{\rho}))\partial_{x}^{\ell}V_{xx} \partial_{x}^{k}V_{t}dx-\mu\sum_{\ell<k}C_{k}^{\ell} \int_{\R}\left(\partial^{k-\ell}_{x}\bar{\rho}_{x}\partial^\ell_{x}\Phi\partial^k_{x}V_{t}+\partial^{k-\ell}_{x}\bar{\rho}\partial^\ell_{x}\Phi_{x}\partial^k_{x}V_{t}\right)dx\\[2mm]
		&-\mu \int_{\R}\bar{\rho}_{t}\partial^k_{x}\Phi \partial^k_{x}V_{x}dx-\int_{\R}\mu\partial^k_{x}(V_{x}\Phi_{x})\partial^k_{x} V_{t}dx-\frac{\mu}{b}\int_{\R}\partial^k_{x}\left[\bar{\rho}_{x}\left(aV_{x}-b\Phi\right)\right] \partial^k_{x}V_{t}dx\\
		&-\int_{\R}\partial^k_{x}h_{x} \partial^k_{x} V_{t}dx-\int_{\R}\partial^k_{x}f_{x} \partial^k_{x}V_{t}dx.
	\end{split}
\end{equation}
Applying $\partial^{k}_{x}$ for $0\leq k\leq 2$ to the second equation of $\eqref{va1d1fxk}$, multiplying the resultant equation  by $\frac{\mu}{a}\bar{\rho}\partial^k_{x}\Phi_{t}$ and taking integration in $x$ give

\begin{equation}\label{va1dphit}
	\begin{split}
		&\frac{d}{dt}\left\{\frac{\mu b}{2a}\int_{\R}\bar{\rho}\left(\partial^k_{x}\Phi\right)^2dx+\frac{\mu D}{2a}\int_{\R} \bar{\rho}\left(\partial^k_{x}\Phi_{x}\right)^2dx\right\}+\frac{\mu}{a}\int_{\R}\bar{\rho}\left(\partial^k_{x}\Phi_{t} \right)^2dx-\mu \int_{\R}\partial^k_{x}V_{x} \bar{\rho}\partial^k_{x}\Phi_{t}dx\\[2mm]
		=&\frac{\mu b}{2a}\int_{\R}\bar{\rho}_{t}\left(\partial^k_{x}\Phi\right)^2dx+\frac{\mu D}{2a}\int_{\R} \bar{\rho}_{t}\left(\partial^k_{x}\Phi_{x}\right)^2dx-\frac{\mu D}{a}\int_{\R} \bar{\rho}_{x}\partial^k_{x}\Phi_{x} \partial^k_{x}\Phi_{t}dx+\frac{\mu}{a}\int_{\R} \bar{\rho}\partial^k_{x}\Phi_{t} \partial_{x}^{k}gdx.
	\end{split}
\end{equation}
Combining \eqref{va1d2vt} with \eqref{va1dphit} yields
\begin{equation}\label{va1d2sumt}
\begin{split}
	&\frac{d}{dt}\left\{\frac{1}{2}\|\partial^k_{x}V_{t}\|^2+\frac{\mu D}{2a}\int_{\R} \bar{\rho}\left(\partial^k_{x}\Phi_{x}\right)^2dx\right\}+\alpha\|\partial^k_{x}V_{t}\|^2+\frac{\mu}{a}\int_{\R}\bar{\rho}\left(\partial^k_{x}\Phi_{t} \right)^2dx \\[2mm]
	&+\frac{1}{2}\frac{d}{dt}\left\{\int_{\R} p'(\bar{\rho})\left(\partial^k_{x}V_{x}\right)^2dx-2\mu \int_{\R}\bar{\rho}\partial^k_{x}\Phi \partial^k_{x}V_{x}dx+\frac{\mu b}{a}\int_{\R}\bar{\rho}\left( \partial^k_{x}\Phi\right)^2dx\right\}\\[2mm]
	&	=:J_{1}^{(k)}+J_{2}^{(k)}+J_{3}^{(k)}+J_{4}^{(k)}+\sum_{\ell<k}C_{k}^{\ell}J_{k,\ell},
\end{split}
\end{equation}
where
\begin{equation*}
	\begin{split}
		J_{1}^{(k)}=&\frac{1}{2}\int_{\R} p''(\bar{\rho})\bar{\rho}_{t}\left(\partial^k_{x}V_{x}\right)^2dx-\int_{\R} p''(\bar{\rho})\bar{\rho}_{x}\partial^k_{x}V_{x} \partial^k_{x}V_{t}dx -\mu \int_{\R}\bar{\rho}_{t}\partial^k_{x}\Phi \partial^k_{x}V_{x}dx\\
		&+\frac{\mu b}{2a}\int_{\R}\bar{\rho}_{t}\left( \partial^k_{x}\Phi\right)^2dx+\frac{\mu D}{2a}\int_{\R} \bar{\rho}_{t}\left(\partial^k_{x}\Phi_{x}\right)^2dx-\frac{\mu D}{a}\int_{\R} \bar{\rho}_{x}\partial^k_{x}\Phi_{x} \partial^k_{x}\Phi_{t}dx,
	\end{split}
\end{equation*}
\begin{equation*}
	\begin{split}
		J_{2}^{(k)}=\int_{\R} \partial_{x}^{k}\left(p''(\bar{\rho})\bar{\rho}_{x}V_{x}\right) \partial^k_{x}V_{t}dx
		-\int_{\R}\mu\partial^k_{x}(V_{x}\Phi_{x}) \partial^k_{x} V_{t}dx-\frac{\mu}{b}\int_{\R}\partial^k_{x}\left[\bar{\rho}_{x}\left(aV_{x}-b\Phi\right)\right] \partial^k_{x}V_{t}dx,
	\end{split}
\end{equation*}
\begin{equation*}
\begin{split}
		J_{3}^{(k)}&=-\int_{\R}\partial^k_{x}h_{x} \partial^k_{x} V_{t}dx-\int_{\R}\partial^k_{x}f_{x} \partial^k_{x}V_{t}dx=:J_{3h}^{(k)}+J_{3f}^{(k)},\\ J_{4}^{(k)}&=\frac{\mu}{a}\int_{\R} \bar{\rho}\partial^k_{x}\Phi_{t} \partial_{x}^{k}gdx
		\end{split}
\end{equation*}
and
\begin{equation*}
	J_{k,\ell}=\int_{\R}\partial^{k-\ell}_{x}(p'(\bar{\rho}))\partial_{x}^{\ell}V_{xx} \partial_{x}^{k}V_{t}dx-\mu \int_{\R}\partial^{k-\ell}_{x}\bar{\rho}_{x}\partial^\ell_{x}\Phi \partial^k_{x}V_{t}dx-\mu\int_{\R}\partial^{k-\ell}_{x}\bar{\rho}\partial^\ell_{x}\Phi_{x} \partial^k_{x}V_{t}dx.
\end{equation*}
It follows from the Cauchy-Schwartz inequality and Lemma \ref{prodecay} that
\begin{equation*}
		J_{1}^{(k)}\leq C\delta_{0} (\|\partial^k_{x}V_{x}\|^2+\|\partial^k_{x}V_{t}\|^2+\|\partial^k_{x}\Phi\|^2+\|\partial^k_{x}\Phi_{x}\|^2+\|\partial^k_{x}\Phi_{t}\|^2).
\end{equation*}
Due to Lemma \ref{lemma3.1}, we can estimate $J^{(k)}_{2}$ and $J_{k,\ell}$ as follows,
\begin{equation*}
	\begin{split}
	J_{2}^{(k)}\leq &C\left(\|\bar{\rho}_{x}\|_{L^{\infty}}\|\partial^k_{x}[V_{x},\Phi]\|+\|[V_{x},\Phi]\|_{L^{\infty}}\|\partial^k_{x}\bar{\rho}_{x} \|\right)\|\partial^k_{x}V_{t}\|\\
	&+\left(\|V_{x}\|_{L^{\infty}}\|\partial^k_{x}\Phi_{x}\|+\|\Phi_{x}\|_{L^{\infty}}\|\partial^k_{x}V_{x} \|\right)\|\partial^k_{x}V_{t}\|\\
	\leq & C(N(T)+\delta_{0})(\|V_{x}\|_{2}^2+\|\partial^k_{x}V_{t}\|^2+\|\Phi\|_{3}^2).
	\end{split}
\end{equation*}
Thanks to $\ell<k$, $J_{k,\ell}$ can be rewritten in the following form
\begin{equation*}
	\begin{split}
		J_{k,\ell}=&-\int_{\R}\partial^{k-1-\ell}_{x}(p''(\bar{\rho})\bar{\rho}_{x})\partial_{x}^{\ell}V_{xx} \partial_{x}^{k}V_{t}dx-\mu \int_{\R}\partial^{k-\ell}_{x}\bar{\rho}_{x}\partial^\ell_{x}\Phi \partial^k_{x}V_{t}dx-\mu\int_{\R}\partial^{k-1-\ell}_{x}\bar{\rho}_{x}\partial^\ell_{x}\Phi_{x} \partial^k_{x}V_{t}dx\\
		\leq &C\left(\|\bar{\rho}_{x}\|_{L^{\infty}}\|\partial^{k-1}_{x}[V_{xx}\Phi_{x}]\|+\|[V_{xx},\Phi_{x}]\|_{L^{\infty}}\|\partial^{k-1}_{x}\bar{\rho}_{x} \|\right)\|\partial^k_{x}V_{t}\|\\
		&+\left(\|\bar{\rho}_{x}\|_{L^{\infty}}\|\partial^k_{x}\Phi\|+\|\Phi\|_{L^{\infty}}\|\partial^k_{x}\bar{\rho}_{x} \|\right)\|\partial^k_{x}V_{t}\|\\
		\leq & C\delta_{0}(\|V_{x}\|_{2}^2+\|\partial^k_{x}V_{t}\|^2+\|\Phi\|_{3}^2).
	\end{split}
\end{equation*}
For $J_{4}^{(k)}$, with the definition of $g$ in \eqref{Defhf}, using the Cauchy-Schwartz inequality and Lemma \ref{prodecay}, we have
\begin{equation*}
	\begin{split}
		J_{4}^{(k)}\leq C\delta_{0}\|\partial_{x}^{k}\Phi_{t}\|^2+\frac{C}{\delta_{0}}\|\partial_{x}^{k}g\|^{2}
		\leq  C\delta_{0}\|\partial_{x}^{k}\Phi_{t}\|^2+C\delta_{0}(1+t)^{-\frac{3}{2}-k}.
	\end{split}
\end{equation*}
In order to estimate $J_{3}^{(k)}$, we calculate $h_{x}$ and $f_{x}$ first.
Note that
\begin{equation}\label{va1dh}
	\arraycolsep=1.5pt
	\begin{array}{rl}
		h_{x}= \displaystyle-\left(\frac{(V_t+\frac{1}{\alpha}q(\bar{\rho})_x)^2}{V_x+\bar{\rho}}\right)_x
		=& \displaystyle \frac{(V_t+\frac{1}{\alpha}(q(\bar{\rho}))_x)^2}{(V_x+\bar{\rho})^2}V_{xx} -\frac{2(V_t+\frac{1}{\alpha}q(\bar{\rho})_x)}{V_x+\bar{\rho}}V_{xt}\\[3mm]
		&  \displaystyle+\frac{(V_t+\frac{1}{\alpha}(q(\bar{\rho}))_x)^2}{(V_x+\bar{\rho})^2}\bar{\rho}_{x} -\frac{2(V_t+\frac{1}{\alpha}q(\bar{\rho})_x)}{V_x+\bar{\rho}}\frac{1}{\alpha}q(\bar{\rho})_{xx},
	\end{array}
\end{equation}
and
\begin{equation}\label{va1d5d}
	\begin{split}
		f_{x}=&\frac{1}{\alpha}(q(\bar{\rho}))_{xt}-\left[p(V_x+\bar{\rho})-p(\bar{\rho})-p'(\bar{\rho})V_x\right]_x\\[2mm]
		=&\frac{1}{\alpha}(q(\bar{\rho}))_{xt}-[p'(V_x+\bar{\rho})-p'(\bar{\rho})]V_{xx}-(p'(V_x+\bar{\rho})-p'(\bar{\rho})-p''(\bar{\rho})V_x)\bar{\rho}_{x}.
	\end{split}
\end{equation}
Then, for $J_{3h}^{(k)}$, we  focus on the term which contains the highest-order derivative $V_{xx}$ and $V_{xt}$,
\begin{equation}\label{JH}
	\begin{split}
		J_{3h}^{(k)}=-&\int_{\R}\partial^k_{x}h_{x} \partial^k_{x} V_{t}dx\\
		=&-\int_{\R}\frac{(V_t+\frac{1}{\alpha}(q(\bar{\rho}))_x)^2}{(V_x+\bar{\rho})^2}\partial^k_{x}V_{xx}\partial^k_{x} V_{t}dx+\int_{\R} \frac{2(V_t+\frac{1}{\alpha}q(\bar{\rho})_x)}{V_x+\bar{\rho}}\partial^k_{x}V_{xt}\partial^k_{x} V_{t}dx\\
&\ \ -\int_{\R}({\rm O.T.H.})\partial^k_{x} V_{t} dx\\
		=&\int_{\R}\left(\frac{(V_t+\frac{1}{\alpha}(q(\bar{\rho}))_x)^2}{(V_x+\bar{\rho})^2}\right)_{x}\partial^k_{x}V_{x}\partial^k_{x} V_{t}dx+\frac{1}{2}\frac{d}{dt}\int_{\R}\frac{(V_t+\frac{1}{\alpha}(q(\bar{\rho}))_x)^2}{(V_x+\bar{\rho})^2}\left(\partial^k_{x}V_{x}\right)^2dx\\
	   &-\frac{1}{2}\int_{\R}\left(\frac{(V_t+\frac{1}{\alpha}(q(\bar{\rho}))_x)^2}{(V_x+\bar{\rho})^2}\right)_{t}\left(\partial^k_{x}V_{x}\right)^2dx-\int_{\R}\left( \frac{(V_t+\frac{1}{\alpha}q(\bar{\rho})_x)}{V_x+\bar{\rho}}\right)_{x}\left(\partial^k_{x}V_{t}\right)^2dx\\
	   	&-\int_{\R}({\rm O.T.H.})\partial^k_{x} V_{t} dx,
			\end{split}
\end{equation}
where $({\rm O.T.H.})$ as an abbreviation for ``other terms of $\partial_{x}^{k}h_{x}$" reading as
\begin{equation*}
	\begin{split}
	({\rm O.T.H.})=&\sum_{\ell<k}C_{k}^{\ell}\partial_{x}^{k-\ell}\left(\frac{(V_t+\frac{1}{\alpha}(q(\bar{\rho}))_x)^2}{(V_x+\bar{\rho})^2}\right)\partial^{\ell}_{x}V_{xx}-\sum_{\ell<k}C_{k}^{\ell} \partial_{x}^{k-\ell}\left(\frac{2(V_t+\frac{1}{\alpha}q(\bar{\rho})_x)}{V_x+\bar{\rho}}\right)\partial^\ell_{x}V_{xt}\\
&+\partial_{x}^{k}\left(\frac{(V_t+\frac{1}{\alpha}(q(\bar{\rho}))_x)^2}{(V_x+\bar{\rho})^2}\bar{\rho}_{x} -\frac{2(V_t+\frac{1}{\alpha}q(\bar{\rho})_x)}{V_x+\bar{\rho}}\frac{1}{\alpha}q(\bar{\rho})_{xx}\right).
	\end{split}
\end{equation*}
Similarly, for $J_{3f}^{(k)}$, we focus on the term which contains the highest-order derivative $V_{xx}$,
\begin{equation}\label{JF}
	\begin{split}
		J_{3f}^{(k)}=&-\int_{\R}\partial^k_{x}f_{x} \partial^k_{x} V_{t}dx
		=\int_{\R}[p'(V_x+\bar{\rho})-p'(\bar{\rho})]\partial^k_{x}V_{xx} \partial^k_{x} V_{t}dx-\int_{\R}({\rm O.T.F.})\partial^k_{x} V_{t} dx,\\
		=&-\int_{\R}[p'(V_x+\bar{\rho})-p'(\bar{\rho})]\partial^k_{x}V_{x} \partial^k_{x} V_{xt}dx-\int_{\R}[p'(V_x-\bar{\rho})-p'(\bar{\rho})]_{x}\partial^k_{x}V_{x} \partial^k_{x} V_{t}dx\\
&\ \ -\int_{\R}({\rm O.T.F.})\partial^k_{x} V_{t} dx\\		=&-\frac{1}{2}\frac{d}{dt}\int_{\R}[p'(V_x+\bar{\rho})-p'(\bar{\rho})]\left(\partial^k_{x}V_{x}\right)^2dx+\frac{1}{2}\int_{\R}[p'(V_x+\bar{\rho})-p'(\bar{\rho})]_{t}\left(\partial^k_{x}V_{x}\right)^2dx\\
		&-\int_{\R}[p'(V_x+\bar{\rho})-p'(\bar{\rho})]_{x}\partial^k_{x}V_{x} \partial^k_{x} V_{t}dx-\int_{\R}({\rm O.T.F.})\partial^k_{x} V_{t} dx,
	\end{split}
\end{equation}	
with
\begin{equation*}
	\begin{split}
		({\rm O.T.F.})=	&
		-\sum_{\ell<k}C_{k}^{\ell}\partial_{x}^{k-\ell}[p'(V_x+\bar{\rho})-p'(\bar{\rho})]\partial^\ell_{x}V_{xx} \\
		&+\partial_{x}^{k}\left(\frac{1}{\alpha}(q(\bar{\rho}))_{xt}-(p'(V_x+\bar{\rho})-p'(\bar{\rho})-p''(\bar{\rho})V_x)\bar{\rho}_{x}\right).
	\end{split}
\end{equation*}	
Here we use $({\rm O.T.F.})$ as an abbreviation for ``other terms of $\partial^k_{x}f_{x}$". Clearly the total order of spatial derivatives for terms which are the product between $V_{x}$ and $V_{t}$ is not greater than $3$  in $({\rm O.T.H.})$ and $({\rm O.T.F.})$, and the highest order of derivatives for $V_{x}$ and $V_{t}$ is not greater than $2$. Then we can use Lemma \ref{lemma3.1} and Cauchy-Schwartz inequality to get
\begin{equation*}
	\begin{split}
		\int_{\R}({\rm O.T.H.})\partial^k_{x} V_{t} dx+\int_{\R}({\rm O.T.F.})\partial^k_{x} V_{t} dx
		\leq  C(\delta_{0}+N(T))\left(\|V_{x}\|_{2}^2+\|V_{t}\|_{2}^2\right) +C\delta_{0}(1+t)^{-\frac{5}{2}-k},
	\end{split}
\end{equation*}
where we have used the following fact
$$
|V_{tt}|\leq C\left(|V_{xx}|+|V_{t}|+|V_{x}||\Phi_{x}|+(|\bar{\rho}_{x}|+|\bar{\phi}_{x}|)|V_{x}|+|V_{xt}|+|\Phi_{x}|+|\bar{\rho}_{x}|^3+|\bar{\rho}_{t}||\bar{\rho}_{x}|\right)\leq C(N(T)+\delta_{0}).
$$
Based on the above calculations for  $J_{3h}^{(k)}$ and $J_{3f}^{(k)}$, we can estimate $J_{3}^{(k)}$ as follows,
\begin{equation*}
	\begin{split}
		J_{3}^{(k)}\leq & \frac{1}{2}\frac{d}{dt}\int_{\R}\frac{(V_t+\frac{1}{\alpha}(q(\bar{\rho}))_x)^2}{(V_x+\bar{\rho})^2}\left(\partial^k_{x}V_{x}\right)^2dx -
		\frac{1}{2}\frac{d}{dt}\int_{\R}[p'(V_x+\bar{\rho})-p'(\bar{\rho})]\left(\partial^k_{x}V_{x}\right)^2dx\\
		&+C(\delta_{0}+N(T))\left(\|V_{x}\|_{2}^2+\|V_{t}\|_{2}^2\right) +C\delta_{0}(1+t)^{-\frac{5}{2}-k}.
	\end{split}
\end{equation*}
Feeding \eqref{va1d2sumt} on all the estimations of $J_{1}^{(k)}$-$J_{4}^{(k)}$ and $J_{k,\ell}$, and taking summation of \eqref{va1d2sumt} over $0\leq k\leq 2$, one has \eqref{Step2C}.

 \medskip

 {\bf{Step 3.}} First, under the assumptions $p'(\bar{\rho})-\frac{a\mu}{b}\bar{\rho}>0$ and $\rho_{-}<\bar{\rho}<\rho_{+}$, we have
\begin{equation}\label{equiva}
	\int_{\R} p'(\bar{\rho})\left(\partial^k_{x}V_{x}\right)^2dx-2\mu \int_{\R}\bar{\rho}\partial^k_{x}\Phi \partial^k_{x}V_{x}dx+\frac{\mu b}{a}\int_{\R}\bar{\rho}\left( \partial^k_{x}\Phi\right)^2dx\sim  \|[\partial^k_{x}\Phi,\partial^k_{x}V_{x}]\|^2.
\end{equation}
 Second, adding \eqref{Step1C} with \eqref{Step2C} multiplied by a constant $K$ which will be determined later, it follows that
\begin{equation}\label{Step3C}
	\begin{aligned}
		&\frac{d}{dt}\mathcal{E}(t)+C\left(\|V_{x}\|^2_{2}+\|\Phi\|_{3}^2+\|\Phi_{t}\|_{2}^2\right)+(\alpha K-1)\|V_{t}\|_{2}^{2}\\
		 \leq & C\delta_{0}(1+t)^{-\frac{5}{4}}+C(N(T)+\delta_{0})(\|V_{x}\|_{2}^2+\|V_{t}\|_{2}^2+\|\Phi\|_{3}^2+\|\Phi_t\|_{2}^2).
	\end{aligned}
\end{equation}
 where $\alpha K>1$, $\mathcal{E}(t)$ is given by

 \begin{equation}\label{DefE}
 	\begin{aligned}
 		\mathcal{E}(t)=&\sum_{0\leq k\leq 2}\left\{\frac{\alpha}{2}\|\partial^k_{x}V\|^2+\int_{\R}\partial^k_{x}V_{t}\partial^k_{x}Vdx+\frac{K}{2}\|\partial^k_{x}V_{t}\|^2\right\}\\
 		&+\frac{K}{2}\sum_{0\leq k\leq 2}\left\{\int_{\R} p'(\bar{\rho})\left(\partial^k_{x}V_{x}\right)^2dx-2\mu \int_{\R}\bar{\rho}\partial^k_{x}\Phi \partial^k_{x}V_{x}dx+\frac{\mu b}{a}\int_{\R}\bar{\rho}\left( \partial^k_{x}\Phi\right)^2dx\right\}\\
 		&+\sum_{0\leq k\leq 2}\left\{\frac{\mu}{2a}\int_{\R}\bar{\rho}\left(\partial^k_{x}\Phi\right)^2dx +\frac{\mu D K}{2a}\int_{\R}\bar{\rho}\left(\partial^k_{x}\Phi_{x}\right)^2dx\right\}\\[2mm]
 	&+\frac{\mu}{b}\int_{\R}\bar{\rho}_{x}\Phi Vdx -\frac{K}{2}\sum_{0\leq k\leq 2}\int_{\R}\frac{(V_t+\frac{1}{\alpha}(q(\bar{\rho}))_x)^2}{(V_x+\bar{\rho})^2}\left(\partial^k_{x}V_{x}\right)^2dx\\
 	&+\frac{K}{2}\sum_{0\leq k\leq 2}\int_{\R}[p'(V_x+\bar{\rho})-p'(\bar{\rho})]\left(\partial^k_{x}V_{x}\right)^2dx.
 	\end{aligned}
 \end{equation}
On the one hand, we can choose $K$ large enough such that
$$\frac{\alpha}{2}\|\partial^k_{x}V\|^2+\int_{\R}\partial^k_{x}V_{t}\partial^k_{x}Vdx+\frac{K}{2}\|\partial^k_{x}V_{t}\|^2\geq C\left(\|\partial^k_{x}V_{t}\|^2+\|\partial^k_{x}V\|^2\right)$$
for some  constant $C>0$ independent of $t$. On the other hand,  with the Cauchy-Schwartz inequality, Lemma \ref{prodecay} and {\it a priori} assumption, we have
\begin{equation*}
	\int_{\R}\bar{\rho}_{x}\Phi Vdx\leq C\delta_{0}\left(\|\Phi\|^2+\|V\|^2\right),
	\end{equation*}
\begin{equation*}
	\int_{\R}\frac{(V_t+\frac{1}{\alpha}(q(\bar{\rho}))_x)^2}{(V_x+\bar{\rho})^2}\left(\partial^k_{x}V_{x}\right)^2dx\leq C(N(T)+\delta_{0})\|\partial^k_{x}V_{x}\|^2
\end{equation*}
and
\begin{equation*}
	\int_{\R}[p'(V_x+\bar{\rho})-p'(\bar{\rho})]\left(\partial^k_{x}V_{x}\right)^2dx\leq CN(T)\|\partial^k_{x}V_{x}\|^2.
\end{equation*}
Choosing $N(T)$ and $\delta_{0}$ small enough, one gets from \eqref{equiva} and \eqref{DefE} that
$$
\mathcal{E}(t)\sim \|V\|_{3}^2+\|V_{t}\|_{2}^2+\|\Phi\|_{3}^2.
$$
With the above equivalence for $\mathcal{E}(t)$, after integrating \eqref{Step3C} over $(0,t)$, and taking $N(T)$ and $\delta_{0}$ sufficiently small, we get \eqref{f1}. Thus, the proof of Proposition \ref{mainpro} is completed.
\end{proof}

Finally, we need to verify that the {\it a priori} assumption \eqref{priori} is achievable. Since under the {\it a priori} assumption \eqref{priori}, we have proved that \eqref{f1} holds true when $N(T)$ is appropriately small. So as long as $\varepsilon_0$ is small enough,  \eqref{priori} is ensured by \eqref{f1}. As such under the conditions of Proposition \ref{1.1}, we close the {\it a priori} assumption \eqref{priori}.

\section{The time-decay rate of solutions}\label{sec4}

In this section, we are devoted to establishing the decay rate of the solution $(V,M,\Phi)$ or $(V,V_{t},\Phi)$ to \eqref{va1d1f}. First, we make the following  {\it a priori} assumption on $(V,V_{t},\Phi)$
\begin{equation}\label{asstime1}
\sum_{k=0}^{2}(1+t)^{k+1}\norm{\partial_{x}^k [V_{x},\Phi]}^2+\sum_{k=0}^{2}(1+t)^{k+2}\norm{\partial_{x}^k V_{t}}^2+\sum_{k=0}^{1}(1+t)^{k+3}\norm{\partial_{x}^{k}\Phi_{t}}^2\ll 1.
\end{equation}
Then we turn to prove the time-decay rate of $(V,V_{t},\Phi)$ which is indicated by the following energy estimates.
 \begin{proposition}\label{decay rate}
Under the assumption of  Proposition \ref{1.1}, we have
\begin{equation}\label{thm1.1c12}
	\begin{split}
		&\sum_{k=0}^{2}(1+t)^{k+1}\norm{\partial_{x}^k [V_{x},\Phi,\Phi_{x}]}^2+\sum_{k=0}^{2}\int_{0}^t(1+s)^{k}\norm{\partial_{x}^{k}[V_{x},\Phi,\Phi_{x}](\cdot,s)}^2 ds\\[2mm]
		\leq &C\left(\norm{V_0}^2_3+\norm{M_0}^2_2+\norm{\Phi_0}^2_4+\delta_{0}\right) \leq C\epsilon^2
	\end{split}
\end{equation}
and
\begin{equation*}
	\begin{split}
		&\sum_{k=0}^{2}(1+t)^{k+2}\norm{\partial_{x}^k V_{t}}^2+\sum_{k=0}^{2}\int_{0}^t(1+s)^{k+1}\norm{\partial_{x}^k V_{t}(\cdot,s)}^2 ds\\[2mm]
			&+\sum_{k=0}^{1}(1+t)^{k+3}\norm{\partial_{x}^k [\Phi_{t},\Phi_{xt}]}^2+\sum_{k=0}^{1}\int_{0}^t(1+s)^{k+2}\norm{\partial_{x}^k [\Phi_{t},\Phi_{xt}](\cdot,s)}^2 ds\\[2mm]
		\leq &C\left(\norm{V_0}^2_3+\norm{M_0}^2_2+\norm{\Phi_0}^2_4+\delta_{0}\right) \leq C\epsilon^2.
	\end{split}
\end{equation*}

\end{proposition}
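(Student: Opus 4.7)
The plan is to establish the time-decay estimates by induction on the derivative order $k=0,1,2$ through a weighted energy scheme built on the identities of Step 2 in the proof of Proposition \ref{mainpro}, followed by a bootstrap for $V_t,\Phi_t,\Phi_{xt}$. The a priori assumption \eqref{asstime1} is closed at the end by the improved bounds.

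For stage one (proof of \eqref{thm1.1c12}), at each fixed $k$ I would use the identity underlying \eqref{Step2C} at that single derivative level alone. The associated functional
\begin{equation*}
\widetilde{\mathcal{E}}_k(t):=\|\partial_x^k V_t\|^2+\int_{\R}p'(\bar\rho)(\partial_x^k V_x)^2\,dx-2\mu\int_{\R}\bar\rho\,\partial_x^k\Phi\,\partial_x^k V_x\,dx+\frac{\mu b}{a}\int_{\R}\bar\rho(\partial_x^k\Phi)^2\,dx+\frac{\mu D}{a}\int_{\R}\bar\rho(\partial_x^k\Phi_x)^2\,dx
\end{equation*}
is, by \eqref{repeatu}, equivalent to $\|\partial_x^k[V_t,V_x,\Phi,\Phi_x]\|^2$ and satisfies
\begin{equation*}
\frac{d}{dt}\widetilde{\mathcal{E}}_k(t)+c\|\partial_x^k[V_t,\Phi_t]\|^2\le C\delta_0(1+t)^{-5/2-k}+C(N(T)+\delta_0)\|\partial_x^k[V_x,\Phi,\Phi_x,V_t,\Phi_t]\|^2,
\end{equation*}
where the cubic remainder inherits its structure from Step 2. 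Crucially, this identity contains no $\|\partial_x^k V\|^2$ term in the functional. Multiplying by $w_k(t)=(1+t)^{k+1}$ and integrating in time produces $(k+1)\int_0^t(1+s)^k\widetilde{\mathcal{E}}_k(s)\,ds$ on the right, which for $k=0$ is controlled directly by the dissipation bound of Proposition \ref{mainpro} and for $k\ge 1$ by the induction hypothesis at level $k-1$, since $(1+t)^k\|\partial_x^k[V_x,\Phi,\Phi_x]\|^2=(1+t)^k\|\partial_x^{k-1}[V_{xx},\Phi_x,\Phi_{xx}]\|^2$ is exactly the quantity bounded in $L^1(0,\infty;dt)$ at the previous level. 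The forcing $w_k(s)\delta_0(1+s)^{-5/2-k}$ integrates to $C\delta_0$, and the cubic is absorbed by the smallness of $N(T)$.

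For stage two, I would differentiate the second equation of \eqref{va1d1f} in $t$, apply $\partial_x^k$ for $k=0,1$, and pair with $\partial_x^k\Phi_t$ (and then with $\partial_x^{k+1}\Phi_t$ for the $\Phi_{xt}$ bound). The source $aV_{xt}-\bar\phi_{tt}+D\bar\phi_{xxt}$ is controlled using $V_t=-M$ together with the $V_{xt}$-decay from stage one and the rates for $\bar\phi_{tt},\bar\phi_{xxt}$ from Lemma \ref{prodecay}. In parallel, the $\partial_x^k V_t$-decay at rate $(1+t)^{-(k+2)/2}$ is obtained by multiplying the first equation differentiated in $t$ by $\partial_x^k V_{tt}$ and feeding back the improved $\Phi_t$ bounds. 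The weight-derivative terms are again closed by the appropriate level of induction, and the resulting bounds strictly improve \eqref{asstime1}.

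The hardest part is selecting a functional whose time-derivative yields dissipation in all of $\|\partial_x^k[V_x,\Phi,\Phi_x,V_t,\Phi_t]\|^2$ without introducing a non-dissipative $\|\partial_x^k V\|^2$ term; the full $\mathcal{E}$ of \eqref{DefE} is unsuitable because it contains $\|V\|^2$, whose weighted integral is not directly controlled. Using only Step 2's identity resolves this but sacrifices the explicit dissipation in $\|V_x\|^2,\|\Phi\|^2,\|\Phi_x\|^2$, which must instead be recovered by absorbing $\widetilde{\mathcal{E}}_k$ itself -- produced from the weight-derivative -- into the induction hypothesis at order $k-1$. A secondary subtlety is that the cross term $-2\mu\bar\rho\,\partial_x^k\Phi\,\partial_x^k V_x$ generates, upon time-differentiation, a $\bar\rho_t=O(\delta_0(1+t)^{-1})$ factor that must be absorbed by Cauchy--Schwarz into the dissipation -- feasible but requiring $\delta_0$ sufficiently small so that both the cubic smallness and this linear loss close together.
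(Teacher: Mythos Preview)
Your scheme has a genuine gap at the induction step for $k\ge 1$.  The identity underlying \eqref{Step2C} produces dissipation only in $\|\partial_x^k V_t\|^2$ and $\|\partial_x^k \Phi_t\|^2$, so after multiplying by $(1+t)^{k+1}$ and integrating, the weight--derivative term you must control is
\[
(k+1)\int_0^t (1+s)^{k}\,\widetilde{\mathcal{E}}_k(s)\,ds \;\sim\; \int_0^t (1+s)^{k}\,\|\partial_x^k[V_t,V_x,\Phi,\Phi_x]\|^2\,ds.
\]
You claim that the piece $\int_0^t (1+s)^{k}\|\partial_x^{k-1}[V_{xx},\Phi_x,\Phi_{xx}]\|^2\,ds$ is ``exactly the quantity bounded in $L^1(0,\infty;dt)$ at the previous level,'' but this is not true: at level $k-1$ your scheme yields only the \emph{pointwise} bound $(1+t)^{k}\widetilde{\mathcal{E}}_{k-1}(t)\le C$ and the \emph{time-integrated} bound $\int_0^t(1+s)^{k}\|\partial_x^{k-1}[V_t,\Phi_t]\|^2\,ds\le C$.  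Neither of these controls $\int_0^t(1+s)^{k}\|\partial_x^{k-1}[V_{xx},\Phi_x,\Phi_{xx}]\|^2\,ds$ (the pointwise bound gives decay like $(1+s)^{-k}$, so the integrand is $O(1)$ and the integral diverges linearly).  Stepping up the weight gradually does not help: already at $k=1$, $j=2$ you need $\int_0^t(1+s)\|[V_{xx},\Phi_x,\Phi_{xx}]\|^2\,ds$, and nothing in your hierarchy supplies it.

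What is missing is a \emph{second} multiplier identity that produces dissipation in $\|\partial_x^k[V_x,\Phi,\Phi_x]\|^2$.  In the paper this is obtained (see Step~2 of Lemmas \ref{sec.le4.2}--\ref{sec.le4.3}) by testing the first equation of \eqref{va1d1f} against $-\partial_x^{k+1}V_x$ and the second against $-\frac{\mu\bar\rho}{a}\partial_x^{k+1}\Phi_x$; the positive-definite quadratic form \eqref{repeatu} then yields dissipation $\sim\|\partial_x^k[V_{xx},\Phi_x,\Phi_{xx}]\|^2$, at the price of an energy containing the cross term $\int\partial_x^{k}V_{xt}\,\partial_x^{k}V_{x}$ and a ``bad'' $+\|\partial_x^k V_{xt}\|^2$ on the right.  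A fixed linear combination of the two identities (the paper uses $2\times(\text{first})+\frac{\alpha}{4}\times(\text{second})$) then has \emph{full} dissipation in $\partial_x^k[V_t,\Phi_t,V_{xx},\Phi_x,\Phi_{xx}]$, and the weight--derivative term after multiplying by $(1+t)^{j}$ involves only lower-order norms already controlled at the previous weight or derivative level.  Your last paragraph correctly identifies why \eqref{DefE} is unsuitable, but the cure is not to drop the $V$-multiplier entirely; it is to replace it by the $(-V_{xx})$-multiplier at the correct derivative level.
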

Similar to the proof for the global existence,  with the help of the assumption $bp'(\bar{\rho})-a\mu\bar{\rho}>0$, the decay rates of $V_{x}$ and $\Phi$ can be obtained. The main ideas of the proof for Proposition \ref{decay rate} come from \cite{N1} and \cite{Nishihara2}, but additional efforts are needed to deal with the term $g$ in $\eqref{va1d1f}_{2}$. The time-weighted energy estimates \eqref{thm1.1c12} for $V_{x}$ and $\Phi$ follows from Lemmas \ref{sec.le4.1}-\ref{sec.le4.3} and a coarse decay rate for $V_{t}$ can also be obtained simultaneously. The refined decay rate of $V_{t}$ follows from Lemma \ref{sec.le4.4}. Here, due to the similarity, we only give the detailed proof of Lemmas \ref{sec.le4.1}-\ref{sec.le4.3} to see how we deal with the coupling of $V$ and $\Phi$.

\begin{lemma}\label{sec.le4.1}
Under the assumption of  Proposition \ref{1.1}, we have
\begin{equation*}
(1+t)\norm{[V_x,V_{t},\Phi,\Phi_{x}]}^2+\int_{0}^{t}(1+s)(\norm{V_t}^2+\norm{\Phi_t }^2)ds \leq C\left(\norm{V_0}^2_3+\norm{M_0}^2_2+\norm{\Phi_0}^2_4+\delta_{0}\right).
\end{equation*}
\end{lemma}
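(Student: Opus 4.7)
The strategy is a time-weighted $L^2$ energy estimate that upgrades the $k=0$ case of the basic energy identities underlying the proof of Proposition \ref{mainpro}. I would begin by specializing \eqref{va1d2sum} and \eqref{va1d2sumt} to $k=0$ and forming the same linear combination used in Step 3 of Proposition \ref{mainpro}. Using the positive-definiteness \eqref{repeatu}, this produces a differential inequality of the form
\begin{equation*}
\frac{d}{dt} E_0(t) + c\bigl(\|V_x\|^2 + \|V_t\|^2 + \|\Phi\|_1^2 + \|\Phi_t\|^2\bigr) \le C(N(T) + \delta_0)\, D_0(t) + R_0(t),
\end{equation*}
where $E_0(t) \sim \|V_x\|^2 + \|V_t\|^2 + \|\Phi\|^2 + \|\Phi_x\|^2$ (this $E_0$ deliberately omits the $\|V\|^2$ piece carried in \eqref{DefE}), $D_0$ gathers the dissipation, and $R_0(t)$ collects the source contributions generated by $g$, $\bar{\rho}_t$, $\bar{\rho}_x$, $\bar{\phi}_t$, $\bar{\phi}_{xx}$.

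Multiplying by the weight $(1+t)$ and using $(1+t)\partial_t E_0 = \partial_t[(1+t)E_0] - E_0$, I integrate over $[0,t]$ to obtain
\begin{equation*}
(1+t)E_0(t) + c\int_0^t (1+s)\bigl(\|V_x\|^2 + \|V_t\|^2 + \|\Phi\|_1^2 + \|\Phi_t\|^2\bigr)\, ds \le E_0(0) + \int_0^t E_0(s)\, ds + \int_0^t (1+s) R_0(s)\, ds.
\end{equation*}
The first two terms on the right-hand side are immediately controlled: $E_0(0)\le C(\|V_0\|_3^2+\|M_0\|_2^2+\|\Phi_0\|_4^2+\delta_0)$, and the integrated dissipation bound \eqref{f1} of Proposition \ref{mainpro} gives $\int_0^t E_0(s)\, ds \le C\varepsilon_0^2$.

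The main obstacle is bounding $\int_0^t (1+s) R_0(s)\, ds$, as signalled in the passage preceding the lemma. The most delicate contribution is the source term $\frac{\mu}{a}\int \bar{\rho}\,\Phi\, g\, dx$ coming from \eqref{va1dphi}: since Lemma \ref{prodecay} gives only $\|g\|^2 \le C\delta_0^2(1+s)^{-3/2}$, the naive bound weighted by $(1+s)$ leaves a non-integrable remainder. To recover integrability, I would exploit the identity $g = \frac{a}{b\alpha}(D\alpha \bar{\rho} - q(\bar{\rho}))_{xx}$, obtained from $\bar{\phi}=(a/b)\bar{\rho}$ together with the self-similar equation \eqref{va1d1rh1}, and then integrate by parts in $x$ to move a spatial derivative onto $\bar{\rho}\,\Phi$. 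This replaces the factor $\|\Phi\|$ by $\|\Phi_x\|$, whose faster decay $(1+s)^{-1}$ supplied by the a priori assumption \eqref{asstime1} restores time-integrability. The remaining source terms in $I_2, I_5, I_6, I_7$ and their Step 2 analogues, which feature products like $\bar{\rho}_xV$, $\bar{\rho}_x\Phi$, $\bar{\rho}_{xx}\Phi_xV$, are handled in the same spirit using Lemma \ref{prodecay}, the a priori decay \eqref{asstime1}, and Young's inequality, each yielding an integrable remainder of the form $C(\varepsilon+\delta_0)(1+s)^{-\alpha}$ with $\alpha>1$.

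Finally, choosing $N(T)$, $\delta_0$, and $\varepsilon$ small enough to absorb the $C(N(T)+\delta_0)D_0(t)$ perturbations into the left-hand side delivers the claimed estimate. The hardest step is the treatment of the $g$-contribution; all other manipulations are straightforward time-weighted adaptations of Steps 1--3 in the proof of Proposition \ref{mainpro}.
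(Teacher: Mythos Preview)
Your overall plan is sound in spirit, but there is a genuine framework issue and a divergence from the paper's treatment of the $g$-term.

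\textbf{Framework.} You propose to weight the full combination \eqref{va1d2sum}$+K\cdot$\eqref{va1d2sumt} at $k=0$, i.e.\ the same $\mathcal{E}$ as in \eqref{DefE}. But that combination forces the block $\tfrac{\alpha}{2}\|V\|^2+\int V_tV\,dx$ into the energy; you cannot ``deliberately omit'' $\|V\|^2$ while keeping the cross term $\int V_tV$ (needed for the identity) and still have a nonnegative functional. Once $\|V\|^2$ sits in $E_0$, the product-rule term $\int_0^t E_0(s)\,ds$ contains $\int_0^t\|V\|^2\,ds$, which is \emph{not} bounded by \eqref{f1}. The paper avoids this entirely: for Lemma~\ref{sec.le4.1} it uses \emph{only} \eqref{va1d2sumt} at $k=0$ (no Step~1 contribution), whose time-derivative part already carries the positive-definite quadratic form $\int p'(\bar\rho)V_x^2-2\mu\bar\rho\Phi V_x+\tfrac{\mu b}{a}\bar\rho\Phi^2$, so $E_0\sim\|V_t\|^2+\|V_x\|^2+\|\Phi\|^2+\|\Phi_x\|^2$ without any $\|V\|^2$.

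\textbf{The $g$-term.} With only \eqref{va1d2sumt} in play, the delicate source is $J_4^{(0)}=\tfrac{\mu}{a}\int\bar\rho\,\Phi_t\,g\,dx$, not the Step~1 term $\tfrac{\mu}{a}\int\bar\rho\,\Phi\,g\,dx$ from \eqref{va1dphi} that you single out. The paper handles $J_4^{(0)}$ by integration by parts in \emph{time}: $\int\bar\rho\Phi_t g=\tfrac{d}{dt}\!\int\bar\rho\Phi g-\int\bar\rho_t\Phi g-\int\bar\rho\Phi g_t$, exploiting $\|g_t\|^2\le C\delta_0^2(1+t)^{-7/2}$ so that $(1+t)\|g_t\|^2$ is integrable; the boundary term $(1+t)\int\bar\rho\Phi g$ is absorbed using $\delta_0(1+t)\|\Phi\|^2$ on the left. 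Your spatial integration-by-parts idea, writing $g=\tfrac{a}{b\alpha}(D\alpha\bar\rho-q(\bar\rho))_{xx}$, would also close for $J_4^{(0)}$ once you invoke the a priori decay of $\|\Phi_t\|,\|\Phi_{xt}\|$ from \eqref{asstime1}; but applied to the Step~1 term $\int\bar\rho\Phi g$ it produces $\int\bar\rho_x\Phi_x\,(D\alpha-q'(\bar\rho))\,dx$, and $\|\bar\rho_x\|\,\|\Phi_x\|\le C\delta_0(1+t)^{-5/4}$ becomes $(1+t)^{-1/4}$ after weighting, which is not integrable. So if you insist on including \eqref{va1d2sum} your proposed fix for the $g$-contribution does not close.

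In short: drop \eqref{va1d2sum} from the weighted step and work only with \eqref{va1d2sumt} at $k=0$; then either the paper's time-IBP or your spatial-IBP (combined with \eqref{asstime1}) handles the $g$-term, and the rest of your outline goes through.
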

\begin{proof}
Let $k=0$ in \eqref{va1d2sumt}, we have
	\begin{equation}\label{dect}
		\begin{split}
			&\frac{d}{dt}\left\{\frac{1}{2}\|V_{t}\|^2+\frac{\mu D}{2a}\int_{\R} \bar{\rho}\Phi_{x}dx\right\}+\alpha\|V_{t}\|^2+\frac{\mu}{a}\int_{\R}\bar{\rho}\Phi_{t}^2dx \\[2mm]
			&+\frac{d}{dt}\left\{\frac{1}{2}\int_{\R} p'(\bar{\rho})V_{x}^2dx-\mu \int_{\R}\bar{\rho}\Phi V_{x}dx+\frac{\mu b}{2a}\int_{\R}\bar{\rho}\Phi^2dx\right\}\\[2mm]
			=:	&J_{1}^{(0)}+J_{2}^{(0)}+J_{3}^{(0)}+J_{4}^{(0)},
		\end{split}
	\end{equation}
	with
	\begin{equation*}
		\begin{split}
			J_{1}^{(0)}=&\frac{1}{2}\int_{\R} p''(\bar{\rho})\bar{\rho}_{t}V_{x}^2dx-\int_{\R} p''(\bar{\rho})\bar{\rho}_{x}V_{x} V_{t}dx -\mu \int_{\R}\bar{\rho}_{t}\Phi V_{x}dx\\
			&+\frac{\mu b}{2a}\int_{\R}\bar{\rho}_{t}\Phi^2dx+\frac{\mu D}{2a}\int_{\R} \bar{\rho}_{t}\Phi_{x}^2dx-\frac{\mu D}{a}\int_{\R} \bar{\rho}_{x}\Phi_{x} \Phi_{t}dx,
		\end{split}
	\end{equation*}
	\begin{equation*}
		\begin{split}
			J_{2}^{(0)}=\int_{\R} p''(\bar{\rho})\bar{\rho}_{x}V_{x}V_{t}dx
			-\int_{\R}\mu V_{x}\Phi_{x}  V_{t}dx-\frac{\mu}{b}\int_{\R}\bar{\rho}_{x}\left(aV_{x}-b\Phi\right) V_{t}dx,
		\end{split}
	\end{equation*}
	\begin{equation*}
		J_{3}^{(0)}=-\int_{\R}h_{x} V_{t}dx-\int_{\R}f_{x} V_{t}dx,\ \ \ \ \ J_{4}^{(0)}=\frac{\mu}{a}\int_{\R} \bar{\rho}\Phi_{t} gdx.
	\end{equation*}
	
Now we deal with the terms of the right hand side of \eqref{dect} one by one. It follows from Lemma \ref{prodecay} that
\begin{equation*}
J_{1}^{(0)}\leq C\delta_{0} (1+t)^{-1}\int_{\R}[V_x^2+\Phi^2+\Phi_{x}^2]dx+C\delta_{0}\int_{\R}[V_t^2+\Phi_{t}^2]dx,
\end{equation*}
where we have used that
\begin{equation*}
	\begin{split}
		&-\int_{\R} p''(\bar{\rho})\bar{\rho}_{x}V_{x} V_{t}dx-\frac{\mu D}{a}\int_{\R} \bar{\rho}_{x}\Phi_{x} \Phi_{t}dx\\
		\leq & \delta_{0}\int_{\R}[V^2_t+\Phi_{t}]^2 dx+\frac{C}{\delta_{0}}\norm{\bar{\rho}_x}_{L^{\infty}}^2\int_{\R}[V_x^2+\Phi_{x}^2]dx\\
		\leq & \delta_{0}\int_{\R}[V^2_t+\Phi_{t}]^2 dx+C\delta_{0} (1+t)^{-1}\int_{\R}[V_x^2+\Phi_{x}^2]dx.
	\end{split}
\end{equation*}
Similarly, we can estimate the first and the third term of $J_{2}^{(0)}$, and for the second term, by using  the {\it a priori} assumption \eqref{asstime1}, the Sobolev and Cauchy-Schwartz inequalities, we have
\begin{equation*}
\begin{split}
-\mu\int_{\R}V_x\Phi_xV_t dx\leq &\frac{\alpha}{4}\int_{\R}V^2_t dx+C\norm{\Phi_x}^2_{L^{\infty}}\int_{\R}V_x^2dx\\
\leq &\frac{\alpha}{4}\int_{\R}V^2_t dx+C\norm{\Phi_x}\norm{\Phi_{xx}}\int_{\R}V_x^2dx\\[2mm]
\leq &\frac{\alpha}{4}\int_{\R}V^2_t dx+C(1+t)^{-\frac{5}{2}}\int_{\R}V_x^2dx,
\end{split}
\end{equation*}
which implies that
\begin{equation*}
	J_{2}^{(0)}\leq C (1+t)^{-1}\int_{\R}[V_x^2+\Phi^2]dx+C\left(\frac{\alpha}{4}+\delta_{0}\right)\int_{\R}V_t^2dx.
\end{equation*}

Recalling \eqref{JF} and \eqref{JH} when $k=0$, we can estimate $J_{3}^{(0)}$ as
\begin{equation*}
\begin{split}
J_{3}^{(0)}
\le&-\frac{1}{2}\frac{d}{dt}\int_{\R}[p'(V_{x}+\bar{\rho})-p'(\bar{\rho})]V_{x}^2dx+\frac{1}{2}\frac{d}{dt}\int_{\R}\frac{(V_t+\frac{1}{\alpha}(q(\bar{\rho}))_x)^2}{(V_x+\bar{\rho})^2}V_{x}^2dx \\[2mm]
&+C\varepsilon_{0}\int_{\R}V_{t}^2dx+C(1+t)^{-1}\int_{\R}V_{x}^2dx+C\delta_{0}(1+t)^{-\frac{5}{2}},
\end{split}
\end{equation*}
where we have used \eqref{asstime1}, Lemma \ref{prodecay}, \eqref{va1d1f} and the following estimates:
\begin{equation*}
\left\|\left(\frac{(V_t+\frac{1}{\alpha}(q(\bar{\rho}))_x)^2}{(V_x+\bar{\rho})^2}\right)_{x}\right\|_{L^{\infty}}+\left\|\left(\frac{(V_t+\frac{1}{\alpha}(q(\bar{\rho}))_x)^2}{(V_x+\bar{\rho})^2}\right)_{t}\right\|_{L^{\infty}} \leq C(1+t)^{-1}.
\end{equation*}
Noticing that
\begin{equation*}
\norm{g}^2\le C\left(\norm{\bar{\phi}_t}^2+\norm{\bar{\phi}_{xx}}^2\right)\le C\delta_{0}^2(1+t)^{-\frac{3}{2}}
\end{equation*}
 is insufficient to warrant the decay rate $(1+t)^{-1}$ of $\norm{\Phi}^2$, we estimate $J_{4}^{(0)}$ as follows
\begin{equation*}
\begin{split}
J_{4}^{(0)}=\frac{\mu}{a}\int_{\R}\bar{\rho}\Phi_t g dx=&\frac{d}{dt}\left(\frac{\mu}{a}\int_{\R}\bar{\rho}\Phi g dx\right)-\frac{\mu}{a}\int_{\R}\bar{\rho}_{t}\Phi g dx-\frac{\mu}{a}\int_{\R}\bar{\rho}\Phi g_{t} dx \\[2mm]
\leq &\frac{d}{dt}\left(\frac{\mu}{a}\int_{\R}\bar{\rho}\Phi g dx\right)+C(1+t)^{-1}\int_{\R}\Phi ^2 dx+C(1+t)\int_{\R} \left[ g_{t}^2+\bar{\rho}_{t}^2g^{2}\right] dx\\[2mm]
\leq &\frac{d}{dt}\left(\frac{\mu}{a}\int_{\R}\bar{\rho}\Phi g dx\right)+C(1+t)^{-1}\int_{\R}\Phi ^2 dx+C\delta_{0}(1+t)^{-\frac{5}{2}}.
\end{split}
\end{equation*}
The estimation from $J_{1}^{(0)}$ to $J_{4}^{(0)}$ updates \eqref{dect} as
	\begin{equation}\label{va1d5dec1}
	\begin{split}
			&\frac{d}{dt}\left\{\frac{1}{2}\int_{\R} p'(\bar{\rho})V_{x}^2dx-\mu \int_{\R}\bar{\rho}\Phi V_{x}dx+\frac{\mu b}{2a}\int_{\R}\bar{\rho}\Phi^2dx\right\}\\[2mm]
		&+\frac{d}{dt}\left\{\frac{1}{2}\|V_{t}\|^2+\frac{\mu D}{2a}\int_{\R} \bar{\rho}\Phi_{x}dx\right\}+\frac{\alpha}{2}\|V_{t}\|^2+\frac{\mu}{2a}\int_{\R}\bar{\rho}\Phi_{t}^2dx \\[2mm]
	\leq &C(1+t)^{-1}\int_{\R}[V_x^2+\Phi^2+\Phi_{x}^2]dx+\frac{d}{dt}\left(\frac{\mu}{a}\int_{\R}\bar{\rho}\Phi g dx\right)+C\delta_{0}(1+t)^{-\frac{5}{2}}\\
	&-\frac{1}{2}\frac{d}{dt}\int_{\R}[p'(V_{x}+\bar{\rho})-p'(\bar{\rho})]V_{x}^2dx+\frac{1}{2}\frac{d}{dt}\int_{\R}\frac{(V_t+\frac{1}{\alpha}(q(\bar{\rho}))_x)^2}{(V_x+\bar{\rho})^2}V_{x}^2dx.
	\end{split}
\end{equation}
Multiplying  \eqref{va1d5dec1} with $(1+t)$, we have
\begin{equation}\label{dectr1}
\begin{split}
&\frac{d}{dt}\left((1+t)\int_{\R} \left[\frac{1}{2}p'(\bar{\rho})V_x^2-\mu\bar{\rho}\Phi V_x+\frac{\mu b}{2a}\bar{\rho}\Phi^2\right]dx\right)\\[2mm]
&+\frac{d}{dt}\left((1+t)\int_{\R}\left[ \frac{1}{2}V_t^2+\frac{D\mu }{2a}\bar{\rho}\Phi_x^2\right]dx\right)+\frac{1}{2}(1+t)\left(\int_{\R}\alpha V_t^2dx+\frac{\mu }{a}\int_{\R}\bar{\rho}\Phi_t ^2dx\right)\\[2mm]
=&\int_{\R} \left[\frac{1}{2}p'(\bar{\rho})V_x^2-\mu\bar{\rho}\Phi V_x+\frac{\mu b}{2a}\bar{\rho}\Phi^2\right]dx+\int_{\R}\left[ \frac{1}{2}V_t^2+\frac{D\mu }{2a}\bar{\rho}\Phi_x^2\right]dx\\[2mm]
&+C\int_{\R}\left[V_x^2+\Phi^{2}+\Phi_{x}^{2}\right]dx+\frac{d}{dt}\left((1+t)\frac{\mu}{a}\int_{\R}\bar{\rho}\Phi g dx\right)-\frac{\mu}{a}\int_{\R}\bar{\rho}\Phi g dx+C\delta_{0} (1+t)^{-\frac{3}{2}}\\[2mm]
&+\frac{1}{2}\frac{d}{dt}\left((1+t)\int_{\R}\frac{(V_t+\frac{1}{\alpha}(q(\bar{\rho}))_x)^2}{(V_x+\bar{\rho})^2}V_{x}^2dx\right)-\frac{1}{2}\int_{\R}\frac{(V_t+\frac{1}{\alpha}(q(\bar{\rho}))_x)^2}{(V_x+\bar{\rho})^2}V_{x}^2dx\\
&-\frac{1}{2}\frac{d}{dt}\left((1+t)\int_{\R}[p'(V_{x}+\bar{\rho})-p'(\bar{\rho})]V_{x}^2dx\right)+\frac{1}{2}\int_{\R}[p'(V_{x}+\bar{\rho})-p'(\bar{\rho})]V_{x}^2dx.
\end{split}
\end{equation}
 Integrating \eqref{dectr1} over $[0,t]$ and using Proposition \ref{mainpro} give us that
 \begin{equation*}
\begin{split}
(1+t)\norm{[V_x,V_{t},\Phi,\Phi_{x}]}^2+\int_{0}^{t}(1+s)(\norm{V_t}^2+\norm{\Phi_t }^2)dt \leq C\left(\norm{V_0}^2_3+\norm{M_0}^2_2+\norm{\Phi_0}^2_3+\delta_{0}\right),
 \end{split}
\end{equation*}
where we have used the following estimates
\begin{equation*}
 \begin{split}
 (1+t)\frac{\mu}{a}\int_{\R}\bar{\rho}\Phi g dx\leq &C\delta_{0}(1+t)\int_{\R}\Phi^{2}dx+\frac{C}{\delta_{0}} (1+t)\int_{\R}g^{2}dx\\
 \leq &\delta_{0}(1+t)\int_{\R}\Phi^{2}dx+C\delta_{0}
 \end{split}
 \end{equation*}
and
 \begin{equation*}
 \begin{split}
\frac{\mu}{a}\int_{0}^{t}\int_{\R}\bar{\rho}\Phi g dxdt\leq &C\int_{0}^{t}\int_{\R}\Phi^{2}dxdt+\int_{0}^{t}\int_{\R}g^{2}dxdt
 \leq C\left(\norm{V_0}^2_3+\norm{M_0}^2_2+\norm{\Phi_0}^2_3+\delta_{0}\right).
 \end{split}
 \end{equation*}
Thus, the proof of Lemma \ref{sec.le4.1} is completed.
\end{proof}

\begin{lemma}\label{sec.le4.2}
Under the assumption of Proposition \ref{1.1}, we have
\begin{equation}\label{con.4.2}
\begin{split}
&(1+t)^2\|[V_{xt},V_{xx},\Phi_{x},\Phi_{xx}]\|^2+\int_{0}^{t}[(1+s)^2\|[V_{xt},\Phi_{xt}]\|^2+(1+s)\|[V_{xx},\Phi_{x},\Phi_{xx}]\|^2]ds\\
\leq& C\left(\norm{V_0}^2_3+\norm{M_0}^2_2+\norm{\Phi_0}^2_3+\delta_{0}\right).
\end{split}
\end{equation}
\end{lemma}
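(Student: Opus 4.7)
The plan is to follow the template of Lemma \ref{sec.le4.1}, raised by one derivative in $x$ and one power in $(1+t)$. The starting point is the $k=1$ case of \eqref{va1d2sumt}, obtained by differentiating \eqref{va1d1f} once in $x$ and pairing the momentum-type equation with $\partial_x V_t$ and the parabolic equation with $(\mu/a)\bar\rho\,\partial_x\Phi_t$. The positive-definiteness \eqref{equiva} at $k=1$ lets me absorb the cross term $2\mu\int\bar\rho\,\partial_x\Phi\,\partial_x V_x\,dx$ into a coercive energy $\mathcal{E}_1(t)\sim\|[V_{xt},V_{xx},\Phi_x,\Phi_{xx}]\|^2$, whose time derivative is controlled by the dissipation $\alpha\|V_{xt}\|^2+(\mu/a)\!\int\bar\rho\,\Phi_{xt}^2\,dx$ up to the error terms $J_1^{(1)},\dots,J_4^{(1)}$ and the commutator $J_{1,0}$ appearing in \eqref{va1d2sumt}.

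The weighted decay then follows from a two-pass time-weight argument. Multiplying the combined $k=1$ inequality by $(1+t)$ and integrating gives
\[
(1+t)\mathcal{E}_1(t)+\int_0^t(1+s)\bigl(\|V_{xt}\|^2+\|\Phi_{xt}\|^2\bigr)\,ds\lesssim\mathcal{E}_1(0)+\int_0^t\mathcal{E}_1(s)\,ds+(\mathrm{err}),
\]
with the leftover $\int_0^t\mathcal{E}_1(s)\,ds$ absorbed by the time-integrated bound \eqref{f1} of Proposition \ref{mainpro}. Multiplying instead by $(1+t)^2$ and integrating gives
\[
(1+t)^2\mathcal{E}_1(t)+\int_0^t(1+s)^2\bigl(\|V_{xt}\|^2+\|\Phi_{xt}\|^2\bigr)\,ds\lesssim\mathcal{E}_1(0)+2\!\int_0^t(1+s)\mathcal{E}_1(s)\,ds+(\mathrm{err}),
\]
and the leftover $\int_0^t(1+s)\mathcal{E}_1(s)\,ds$ is now controlled by the first pass. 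To produce the remaining dissipation $\int_0^t(1+s)\|[V_{xx},\Phi_x,\Phi_{xx}]\|^2\,ds$ on the LHS of the lemma, I also use the $k=1$ version of \eqref{va1d2sum} (multiplier $\partial_x V$ instead of $\partial_x V_t$), which is the only identity generating a $\|V_{xx}\|^2$ dissipation; the cross term $\|V_{xx}\|\|V_{xt}\|$ it introduces is absorbed via Cauchy--Schwarz against the $(1+s)^2$-weighted $V_{xt}$ integral just obtained.

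The error terms $J_1^{(1)},\ldots,J_4^{(1)},J_{1,0}$ are handled via the same recipe as in Lemma \ref{sec.le4.1}: the $L^p$-decay of $\bar\rho$-derivatives from Lemma \ref{prodecay} supplies smallness factors, the a priori assumption \eqref{asstime1} controls the nonlinearities, and Lemma \ref{lemma3.1} distributes high derivatives in products like $\partial_x(p'(\bar\rho))\partial_x V_{xx}\cdot\partial_x V_t$ so that each factor carries either a $\|\bar\rho_x\|_{L^\infty}\lesssim\delta_0(1+t)^{-1/2}$ or a Sobolev-embedded $\|V_{xx}\|$ of small norm. The delicate step is the inhomogeneous contribution $J_4^{(1)}=(\mu/a)\int\bar\rho\,\partial_x\Phi_t\,\partial_x g\,dx$: weighted by $(1+t)^2$ it is not directly integrable in time, so one integrates by parts in time,
\[
\int_\R\bar\rho\,\partial_x\Phi_t\,\partial_x g\,dx=\tfrac{d}{dt}\!\int_\R\bar\rho\,\partial_x\Phi\,\partial_x g\,dx-\int_\R\bar\rho_t\,\partial_x\Phi\,\partial_x g\,dx-\int_\R\bar\rho\,\partial_x\Phi\,\partial_x g_t\,dx,
\]
exactly as in the treatment of $J_4^{(0)}$; the extra $t$-derivative on $g$ provides the missing decay via Lemma \ref{prodecay}, which is the structural reason why $\Phi_0\in H^4$ rather than just $H^3$ is demanded at the decay step.

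The main obstacle I anticipate is bookkeeping: tracking which cross-pairings close against the dissipation of the current pass and which require the first-pass estimate as input, and in particular verifying that every surviving term has a time weight that keeps its $dt$-integral finite. The borderline cases come from the inhomogeneity $g$ and from the quadratic $h$-terms after the time weight $(1+t)^2$ is distributed onto them; it is only because the extra $x$-derivative on $\bar\rho,\bar\phi$ earns an extra factor of $(1+t)^{-1/2}$ (per Lemma \ref{prodecay}) that these close. The coupling between $V$ and $\Phi$ is tamed throughout by the positive-definiteness \eqref{repeatu}—without which the decay of $V_x$ and $\Phi$ could not be closed simultaneously—and apart from these structural inputs the argument is fully parallel to Lemma \ref{sec.le4.1}.
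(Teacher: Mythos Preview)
Your plan is essentially the paper's: its Step 1 is the $k=1$ case of \eqref{va1d2sumt}, its Step 2 pairs the undifferentiated system with $-V_{xx}$ and $-\tfrac{\mu\bar\rho}{a}\Phi_{xx}$ (equivalent after integration by parts to your ``$k=1$ of \eqref{va1d2sum}''), and its Step 3 is the two-pass $(1+t)$, $(1+t)^2$ weighting together with the time-integration-by-parts on $J_4^{(1)}$ that you describe.

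One ordering point needs fixing. Your second $(1+t)^2$ pass leaves the remainder $\int_0^t(1+s)\mathcal{E}_1(s)\,ds$, and since $\mathcal{E}_1\sim\|[V_{xt},V_{xx},\Phi_x,\Phi_{xx}]\|^2$ this contains $\int_0^t(1+s)\|[V_{xx},\Phi_x,\Phi_{xx}]\|^2\,ds$; but your first pass on \eqref{va1d2sumt} alone produces only $\int_0^t(1+s)\|[V_{xt},\Phi_{xt}]\|^2\,ds$, so the claim ``controlled by the first pass'' is not yet justified. The Step-2 identity is the only source of $(1+s)$-weighted $\|[V_{xx},\Phi_x,\Phi_{xx}]\|^2$ dissipation, and it must therefore be folded in \emph{at the first $(1+t)$ pass}, not invoked only afterwards. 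The paper handles this by forming the linear combination $2\times\text{(Step 1)}+\tfrac{\alpha}{4}\times\text{(Step 2)}$ before multiplying by $(1+t)$, so that the first pass already yields \eqref{le4.2.ste31} with all dissipation terms present; only then does it multiply the Step-1 identity alone by $(1+t)^2$. Two small side remarks: the bad term the Step-2 identity introduces is the full $\|V_{xt}\|^2$ (from $\|\partial_x^k V_t\|^2$ in \eqref{va1d2sum}), not a cross term $\|V_{xx}\|\|V_{xt}\|$; and the $H^4$ assumption on $\Phi_0$ is not invoked in this lemma---the bound \eqref{con.4.2} closes with $\|\Phi_0\|_3$, and the extra derivative enters only at Lemma \ref{sec.le4.4}.
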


\begin{proof} The proof consists of the following three steps. 	

	{\bf Step 1.} Letting $k=1$ in \eqref{va1d2sumt}, we have
\begin{equation}\label{vxtdec}
	\begin{split}
		&\frac{d}{dt}\left\{\frac{1}{2}\|V_{xt}\|^2+\frac{\mu D}{2a}\int_{\R} \bar{\rho}\Phi_{xx}^2dx\right\}+\alpha\|V_{xt}\|^2+\frac{\mu}{a}\int_{\R}\bar{\rho}\Phi_{xt}^2dx \\[2mm]
		&+\frac{d}{dt}\left\{\frac{1}{2}\int_{\R} p'(\bar{\rho})V_{xx}^2dx-\mu \int_{\R}\bar{\rho}\Phi_{x} V_{xx}dx+\frac{\mu b}{2a}\int_{\R}\bar{\rho}\Phi_{x}^2dx\right\}\\[2mm]
		&	=:J_{1}^{(1)}+J_{2}^{(1)}+J_{3}^{(1)}+J_{4}^{(1)}+J_{1,0},
	\end{split}
\end{equation}
with
\begin{equation*}
	\begin{split}
		J_{1}^{(1)}=&\frac{1}{2}\int_{\R} p''(\bar{\rho})\bar{\rho}_{t}V_{xx}^2dx-\int_{\R} p''(\bar{\rho})\bar{\rho}_{x}V_{xx} V_{xt}dx -\mu \int_{\R}\bar{\rho}_{t}\Phi_{x}V_{xx}dx\\
		&+\frac{\mu b}{2a}\int_{\R}\bar{\rho}_{t}\Phi_{x}^2dx+\frac{\mu D}{2a}\int_{\R} \bar{\rho}_{t}\Phi_{xx}^2dx-\frac{\mu D}{a}\int_{\R} \bar{\rho}_{x}\Phi_{xx}\Phi_{xt}dx,
	\end{split}
\end{equation*}
\begin{equation*}
	\begin{split}
		J_{2}^{(1)}=\int_{\R} \left(p''(\bar{\rho})\bar{\rho}_{x}V_{x}\right)_{x} V_{xt}dx
		-\int_{\R}\mu (V_{x}\Phi_{x})_{x} V_{xt}dx-\frac{\mu}{b}\int_{\R}\left[\bar{\rho}_{x}\left(aV_{x}-b\Phi\right)\right]_{x} V_{xt}dx,
	\end{split}
\end{equation*}
\begin{equation*}
	J_{3}^{(1)}=-\int_{\R}h_{xx} V_{xt}dx-\int_{\R}f_{xx} V_{xt}dx,\ \ \ \ \ J_{4}^{(1)}=\frac{\mu}{a}\int_{\R} \bar{\rho}\Phi_{xt} g_{x}dx
\end{equation*}
and
\begin{equation*}
	J_{1,0}=\int_{\R}p''(\bar{\rho})\bar{\rho}_{x}V_{xx} V_{xt}dx-\mu \int_{\R}\bar{\rho}_{xx}\Phi V_{xt}dx-\mu\int_{\R}\bar{\rho}_{x}\Phi_{x} V_{xt}dx.
\end{equation*}
	
Similar to Lemma \ref{sec.le4.1}, the term $J_{4}^{(1)}$ can be estimated as
 \begin{equation*}
\begin{split}
J_{4}^{(1)}=\frac{\mu }{a}\int_{\R}\bar{\rho}\Phi_{xt} g_xdx=&\frac{d}{dt}\left(\frac{\mu}{a}\int_{\R}\bar{\rho}\Phi_{x} g_{x} dx\right)-\frac{\mu}{a}\int_{\R}\bar{\rho}_{t}\Phi_{x} g_{x} dx-\frac{\mu}{a}\int_{\R}\bar{\rho}\Phi_{x} g_{xt} dx \\[2mm]
\leq &\frac{d}{dt}\left(\frac{\mu}{a}\int_{\R}\bar{\rho}\Phi_{x} g_{x} dx\right)+C(1+t)^{-1}\int_{\R}\Phi_{x}^2 dx+C(1+t)\int_{\R} \left[ g_{xt}^2+\bar{\rho}_{t}^2g_{x}^{2}\right] dx\\[2mm]
\leq &\frac{d}{dt}\left(\frac{\mu}{a}\int_{\R}\bar{\rho}\Phi_{x} g_{x}dx\right)+C(1+t)^{-1}\int_{\R}\Phi_{x}^2 dx+C\delta_{0}(1+t)^{-\frac{7}{2}}.
\end{split}
\end{equation*}
Recalling  \eqref{JH} and \eqref{JF}, we have
\begin{equation*}
\begin{split}
J_{3}^{(1)}=&-\int_{\R}h_{xx}V_{xt}dx-\int_{\R}f_{xx}V_{xt}dx\\
\le&-\frac{1}{2}\frac{d}{dt}\int_{\R}[p'(V_x+\bar{\rho})-p'(\bar{\rho})]V^2_{xx}dx+\frac{1}{2}\frac{d}{dt}\int_{\R}\frac{(V_t+\frac{1}{\alpha}(q(\bar{\rho}))_x)^2}{(V_x+\bar{\rho})^2}V_{xx}^2dx+C\varepsilon_{0}\int_{\R}V_{xt}^2dx \\[2mm]
&+C(1+t)^{-1}\int_{\R}V_{xx}^2dx+C(1+t)^{-2}\int_{\R} [V_{x}^2+V_{t}^2]dx+C\delta_{0}(1+t)^{-\frac{7}{2}}.
\end{split}
\end{equation*}
Substituting the above inequalities into \eqref{vxtdec} leads to
 \begin{equation}\label{vxtdec1}
\begin{split}
&\frac{1}{2}\frac{d}{dt}\int_{\R} V_{xt}^2dx+\frac{1}{2}\frac{d}{dt}\int_{\R}\left(p'(\bar{\rho})V_{xx}^2dx-2\mu\bar{\rho}\Phi_xV_{xx} +\frac{\mu b\bar{\rho}}{a}\Phi_x^2 \right)dx\\[2mm]
&+\frac{D\mu }{2a}\frac{d}{dt}\int_{\R}\bar{\rho}\Phi_{xx}^2dx+\frac{\mu }{a}\int_{\R}\bar{\rho}\Phi_{xt}^2dx+\alpha\int_{\R}V_{xt}^2dx\\[2mm]
\leq&\frac{1}{2}\int_{\R}\left(\frac{\mu \bar{\rho}}{a}\Phi_{xt}^2+\alpha V_{xt}^2 \right)dx+C(1+t)^{-1}\int_{\R}\left[V_{xx}^2+\Phi_{x}^{2}+\Phi_{xx}^{2}\right]dx\\[2mm]
&+ C\delta_{0}(1+t)^{-\frac{7}{2}}+C(1+t)^{-2}\int_{\R} [V_{x}^2+V_{t}^2+\Phi^2]dx+\frac{d}{dt}\left(\frac{\mu}{a}\int_{\R}\bar{\rho}\Phi_{x} g_{x} dx\right)\\
&+\frac{1}{2}\frac{d}{dt}\int_{\R}\frac{(V_t+\frac{1}{\alpha}(q(\bar{\rho}))_x)^2}{(V_x+\bar{\rho})^2}V_{xx}^2dx -\frac{1}{2}\frac{d}{dt}\int_{\R}[p'(V_x+\bar{\rho})-p'(\bar{\rho})]V^2_{xx}dx.
\end{split}
\end{equation}
{\bf Step 2.} Multiplying $\eqref{va1d1f}_{1}$ with $-V_{xx}$ and $\eqref{va1d1f}_{2}$ with $-\frac{\mu\bar{\rho}}{a}\Phi_{xx}$, integrating the result over $\R$, and using the integration by parts, we obtain
 \begin{equation*}
\begin{split}
&\frac{d}{dt}\int_{\R}\left( \frac{\alpha}{2}V_{x}^2+V_{xt}V_{x}\right)dx+\frac{\mu}{2a}\frac{d}{dt}\int_{\R}\bar{\rho}\Phi_{x}^2dx\\[2mm]
&+\int_{\R}\left(p'(\bar{\rho})V_{xx}^2-2\mu \bar{\rho}\Phi_{x}V_{xx}+\frac{b\mu\bar{\rho}}{a}\Phi_{x}^2\right)dx+\frac{\mu D}{a}\int_{\R}\bar{\rho} \Phi_{xx}^2dx\\[2mm]
=&\int_{\R}V_{xt}^2dx-\int_{\R}p''(\bar{\rho})\bar{\rho}_{x}V_{x}V_{xx}dx+\mu\int_{\R}V_{x}\Phi_{x}V_{xx}dx\\[2mm]
&+\mu\int_{\R}V_{x}\bar{\phi}_{x}V_{xx}dx+\int_{\R}(h_{x}+f_{x})V_{xx}dx+\mu\int_{\R}\bar{\rho}_{x}\Phi_{x}V_{x}dx\\[2mm]
&+\frac{\mu}{2a}\int_{\R}\bar{\rho}_{t}\Phi_{x}^2dx-\frac{\mu}{a}\int_{\R}\bar{\rho}_{x}\Phi_{t}\Phi_{x}dx-\frac{b\mu}{a}\int_{\R}\bar{\rho}_{x}\Phi_{x}\Phi dx-\int_{\R}\frac{\mu\bar{\rho}}{a}g\Phi_{xx}dx.
\end{split}
\end{equation*}
Integrating the last term by parts twice leads to
\begin{equation*}
\begin{split}
-\int_{\R}\frac{\mu\bar{\rho}}{a}g\Phi_{xx}dx=&-\frac{\mu}{a}\int_{\R}\left(\bar{\rho}g\right)_{xx}\Phi dx\\
\leq &(1+t)^{-1}\int_{\R}\Phi^2 dx +C(1+t)\int_{\R}\left( g_{xx}^2+\bar{\rho}_{xx}^2g^2+\bar{\rho}_{x}^2g_{x}^2\right)dx\\
\leq &(1+t)^{-1}\int_{\R}\Phi^2 dx +C\delta_{0}(1+t)^{-\frac{5}{2}}.
\end{split}
\end{equation*}
Thus we arrive at
\begin{equation}\label{t3c}
\begin{split}
&\frac{d}{dt}\int_{\R}\left( \frac{\alpha}{2}V_{x}^2+V_{xt}V_{x}\right)dx+\frac{\mu}{2a}\frac{d}{dt}\int_{\R}\bar{\rho}\Phi_{x}^2dx\\[2mm]
&+\int_{\R}\left[p'(\bar{\rho})V_{xx}^2-2\mu \bar{\rho}\Phi_{x}V_{xx}+\frac{b\mu\bar{\rho}}{a}\Phi_{x}^2\right]dx+\frac{\mu D}{a}\int_{\R}\bar{\rho} \Phi_{xx}^2dx\\[2mm]
\leq &2\int_{\R}V_{xt}^2dx+C(1+t)^{-1}\int_{\R}[V_{x}^{2}+\Phi^{2}+\Phi_{t}^2+V_{t}^2]dx\\[2mm]
&+C\delta_{0}(1+t)^{-\frac{5}{2}}+C\varepsilon_{0}\int_{\R}[V_{xx}^{2}+\Phi_{x}^{2}]dx.
\end{split}
\end{equation}
Here, we have used the expressions of $h_{x}$ and $f_{x}$ in \eqref{va1dh} and \eqref{va1d5d}.

{\bf Step 3.}  Multiplying $(1+t)$ to $\D[2\times\eqref{vxtdec1}+\frac{\alpha}{4}\times\eqref{t3c}]$ and integrating the result over  $[0,t]$, we get
\begin{equation}\label{le4.2.ste31}
\begin{split}
&(1+t)\|[V_{x},V_{xt},V_{xx},\Phi_{x},\Phi_{xx}]\|^2+\int_{0}^{t}(1+s)\|[V_{xt},\Phi_{xt},V_{xx},\Phi_{x},\Phi_{xx}]\|^2ds\\
\leq &C\left(\norm{V_0}^2_3+\norm{M_0}^2_2+\norm{\Phi_0}^2_3+\delta_{0}\right).
\end{split}
\end{equation}
Similarly, multiplying \eqref{vxtdec1} by $(1+t)^{2}$  and integrating the resulting equation  over $[0,t]$, we  have from \eqref{le4.2.ste31} that
\begin{equation}\label{le4.2.ste32}
(1+t)^2\|[V_{xt},V_{xx},\Phi_{x},\Phi_{xx}]\|^2+\int_{0}^{t}(1+s)^2\|[V_{xt},\Phi_{xt}]\|^2ds\leq C\left(\norm{V_0}^2_3+\norm{M_0}^2_2+\norm{\Phi_0}^2_3+\delta_{0}\right),
\end{equation}
where we have used the following estimates
\begin{equation*}
 \begin{split}
 (1+t)\frac{\mu}{a}\int_{\R}\bar{\rho}\Phi_{x}g_{x} dx\leq C\delta_{0}(1+t)\int_{\R}\Phi_{x}^{2}dx+\frac{C}{\delta_{0}} (1+t)\int_{\R}g_{x}^{2}dx
 \leq \delta_{0}(1+t)\int_{\R}\Phi_{x}^{2}dx+C\delta_{0},
 \end{split}
 \end{equation*}

 \begin{equation*}
 \begin{split}
\frac{\mu}{a}\int_{0}^t\int_{\R}\bar{\rho}\Phi_{x}g_{x} dxds\leq &C\int_{0}^t\int_{\R}\Phi_{x}^{2}dxds+C\int_{0}^t\int_{\R}g_{x}^{2}dx ds\leq  C\left(\norm{V_0}^2_3+\norm{M_0}^2_2+\norm{\Phi_0}^2_3+\delta_{0}\right),
 \end{split}
 \end{equation*}

 \begin{equation*}
 \begin{split}
 (1+t)^2\frac{\mu}{a}\int_{\R}\bar{\rho}\Phi_{x}g_{x} dx\leq &C\delta_{0}(1+t)^2\int_{\R}\Phi_{x}^{2}dx+\frac{C}{\delta_{0}} (1+t)^2\int_{\R}g_{x}^{2}dx
 \leq \delta_{0}(1+t)^2\int_{\R}\Phi_{x}^{2}dx+C\delta_{0}
 \end{split}
 \end{equation*}
and
 \begin{equation*}
 \begin{split}
\frac{\mu}{a}\int_{0}^t(1+s)\int_{\R}\bar{\rho}\Phi_{x}g_{x} dxds\leq& C\int_{0}^t(1+s)\int_{\R}\Phi_{x}^{2}dxds+C\int_{0}^t(1+s)\int_{\R}g_{x}^{2}dxds\\
 \leq &C\left(\norm{V_0}^2_3+\norm{M_0}^2_2+\norm{\Phi_0}^2_3+\delta_{0}\right).
 \end{split}
 \end{equation*}
Then the combination of \eqref{le4.2.ste31} and \eqref{le4.2.ste32} gives \eqref{con.4.2}. The proof of Lemma \ref{sec.le4.2} is completed.
\end{proof}

\begin{lemma} \label{sec.le4.3}
Under the assumption of  Proposition \ref{1.1}, we have
\begin{equation*}
\begin{split}
&(1+t)^3\|[V_{xxx},V_{xxt},\Phi_{xx},\Phi_{xxx}]\|^2
+\int_{0}^{t}(1+s)^3\|[V_{xxt},\Phi_{xxt}]\|^2+(1+t)^2\|[V_{xxx},\Phi_{xx},\Phi_{xxx}]\|^2ds\\
\leq &C\left(\norm{V_0}^2_3+\norm{M_0}^2_2+\norm{\Phi_0}^2_3+\delta_{0}\right).
\end{split}
\end{equation*}
\end{lemma}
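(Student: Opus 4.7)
The plan is to carry out a three-step argument that parallels the proof of Lemma \ref{sec.le4.2} one differentiation order higher, working with $k=2$ in the energy identity \eqref{va1d2sumt} and with a companion test of the once-differentiated system.

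For Step 1, I would set $k=2$ in \eqref{va1d2sumt}, producing a differential inequality whose left-hand side contains $\frac{d}{dt}(\frac12\|V_{xxt}\|^2+\frac{\mu D}{2a}\int\bar\rho\Phi_{xxx}^2\,dx)$, the perfect-derivative piece $\frac12\frac{d}{dt}\mathcal{Q}_2$ with $\mathcal{Q}_2=\int p'(\bar\rho)V_{xxx}^2-2\mu\bar\rho\Phi_{xx}V_{xxx}+\tfrac{\mu b}{a}\bar\rho\Phi_{xx}^2\,dx$ (which is equivalent to $\|[V_{xxx},\Phi_{xx}]\|^2$ by \eqref{repeatu}), and the dissipation $\alpha\|V_{xxt}\|^2+\frac{\mu}{a}\int\bar\rho\Phi_{xxt}^2\,dx$. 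The terms $J_1^{(2)},J_2^{(2)}$ and the sum $\sum_{\ell<2}J_{2,\ell}$ are bounded by $C\delta_0\|[V_{xxt},\Phi_{xxt}]\|^2+C(1+t)^{-1}\|[V_{xxx},\Phi_{xx},\Phi_{xxx}]\|^2+\text{l.o.t.}$ exactly as in Proposition \ref{mainpro}, using Lemmas \ref{prodecay}-\ref{lemma3.1} and the {\it a priori} assumption \eqref{asstime1}. The nonlinearity $J_3^{(2)}$ is treated as in \eqref{JH}-\eqref{JF}: two perfect time derivatives are peeled off, and the remainder is absorbed thanks to \eqref{asstime1}. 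Finally $J_4^{(2)}=\frac{\mu}{a}\int\bar\rho\Phi_{xxt}g_{xx}\,dx$ is integrated by parts in $t$ just as in Lemma \ref{sec.le4.1}, yielding $\frac{d}{dt}(\frac{\mu}{a}\int\bar\rho\Phi_{xx}g_{xx}\,dx)+C(1+t)^{-1}\|\Phi_{xx}\|^2+C\delta_0(1+t)^{-9/2}$, the last rate following from $\|g_{xxt}\|^2\lesssim\delta_0^2(1+t)^{-9/2}$ via Lemma \ref{prodecay}.

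Step 2 provides the missing dissipation on $\|[V_{xxx},\Phi_{xx},\Phi_{xxx}]\|^2$ itself. Differentiating the first equation of \eqref{va1d1f} once in $x$ and testing against $-V_{xxx}$, and differentiating the second once in $x$ and testing against $-\frac{\mu\bar\rho}{a}\Phi_{xxx}$, I add the two identities. The two linear couplings $\mu(\bar\rho\Phi)_x$ and $aV_x$ recombine into the positive-definite quadratic form \eqref{repeatu}, producing $c\|[V_{xxx},\Phi_{xx},\Phi_{xxx}]\|^2$ on the left modulo $C\|V_{xxt}\|^2+\text{l.o.t.}$ on the right; the new source term arising from $g$ is handled by shifting derivatives onto $\Phi$ and using the $L^2$ estimates $\|g_x\|^2+\|g_{xx}\|^2\le C\delta_0^2(1+t)^{-5/2}$.

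For Step 3, I combine the two inequalities of Steps 1-2 with a suitable constant $\kappa>0$ and then multiply successively by $(1+t)^2$ and $(1+t)^3$ before integrating over $[0,t]$. The $(1+t)^2$ weighting, together with the bounds already known from Lemmas \ref{sec.le4.1}-\ref{sec.le4.2}, closes the estimate on $(1+t)^2\|[V_{xxx},\Phi_{xx},\Phi_{xxx}]\|^2$ and on the time integral of $(1+s)^2\|[V_{xxx},\Phi_{xx},\Phi_{xxx}]\|^2$; the $(1+t)^3$ weighting then closes the bound on $(1+t)^3\|[V_{xxt},\Phi_{xxt}]\|^2$ in the dissipation as well as $(1+t)^3\|[V_{xxx},V_{xxt},\Phi_{xx},\Phi_{xxx}]\|^2$ in the energy. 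The boundary contributions from the time-integration by parts on $g$ are controlled by the Cauchy-Schwarz splitting
\begin{equation*}
(1+t)^j\tfrac{\mu}{a}\!\int\bar\rho\Phi_{xx}g_{xx}\,dx\le \delta_0(1+t)^j\|\Phi_{xx}\|^2+C\delta_0,\qquad j=2,3,
\end{equation*}
together with the integrability $\int_0^t(1+s)^{j-1}\|g_{xx}\|^2\,ds\le C\delta_0$ provided by Lemma \ref{prodecay}.

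The main obstacle I anticipate is the bookkeeping of the inhomogeneous terms generated by $g=-\bar\phi_t+D\bar\phi_{xx}$: since $g$ carries only algebraic decay, each time-weighting step must be balanced against a sharp Lemma \ref{prodecay} bound, and the worst case $\|g_{xxt}\|^2\sim\delta_0^2(1+t)^{-9/2}$ is precisely integrable against the weight $(1+t)^3$, so the estimates close with no margin to spare. All remaining nonlinear cross-terms reduce to those already handled in Lemmas \ref{sec.le4.1}-\ref{sec.le4.2} via Lemma \ref{lemma3.1} and the {\it a priori} assumption \eqref{asstime1}.
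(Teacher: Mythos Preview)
Your plan is essentially the paper's own argument: Step 1 is exactly the case $k=2$ of \eqref{va1d2sumt} with $J_4^{(2)}$ handled by integration by parts in $t$, and Step 2 is the test of the once-differentiated system against $(-V_{xxx},-\tfrac{\mu\bar\rho}{a}\Phi_{xxx})$, with the $g$-term integrated by parts in $x$ onto $\Phi_x$.

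One point in Step 3 needs adjustment. If you multiply the \emph{combined} inequality by $(1+t)^3$, the Step 2 portion of the energy carries $\tfrac{\alpha}{2}\|V_{xx}\|^2$, and its right-hand side carries $C(1+t)^{-1}\|V_{xx}\|^2$; both produce a term $\int_0^t(1+s)^2\|V_{xx}\|^2\,ds$ that Lemmas \ref{sec.le4.1}--\ref{sec.le4.2} do not control (Lemma \ref{sec.le4.2} gives only the pointwise bound $(1+t)^2\|V_{xx}\|^2\le C$, whose time integral grows). The paper avoids this by splitting the weighting: the combined inequality is multiplied by $(1+t)^j$ for $j=1,2$ to obtain $\int_0^t(1+s)^2\|[V_{xxx},\Phi_{xx},\Phi_{xxx}]\|^2\,ds$, and then the highest weight $(1+t)^3$ is applied \emph{only} to the Step 1 inequality \eqref{t7c}, whose energy does not contain $\|V_{xx}\|^2$ and whose right-hand side carries $\|V_{xx}\|^2$ with the better prefactor $(1+t)^{-2}$, so that after weighting one needs only $\int_0^t(1+s)\|V_{xx}\|^2\,ds$, which is supplied by Lemma \ref{sec.le4.2}. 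With this modification your argument goes through verbatim.
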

\begin{proof}
The proof consists of three steps. 	

	{\bf Step 1.} Let $k=2$ in \eqref{va1d2sumt}, we have
	\begin{equation*}
		\begin{split}
			&\frac{1}{2}\frac{d}{dt}\int_{\R} V_{xxt}^2dx+\frac{1}{2}\frac{d}{dt}\int_{\R}\left[p'(\bar{\rho})V_{xxx}^2-2\mu\bar{\rho}\Phi_{xx}V_{xxx} +\frac{\mu b\bar{\rho}}{a}\Phi_{xx}^2\right]dx\\[2mm]
			&+\frac{D\mu }{2a}\frac{d}{dt}\int_{\R}\bar{\rho}\Phi_{xxx}^2dx+\alpha\int_{\R}V_{xxt}^2dx+\frac{\mu }{a}\int_{\R}\bar{\rho}\Phi_{xxt}^2dx\\[2mm]
				&	=:J_{1}^{(2)}+J_{2}^{(2)}+J_{3}^{(2)}+J_{4}^{(2)}+\sum_{\ell<2}J_{2,\ell},
		\end{split}
	\end{equation*}
with
\begin{equation*}
\begin{split}
J_{1}^{(2)}=&\frac{1}{2}\int_{\R} p''(\bar{\rho})\bar{\rho}_{t}V_{xxx}^2dx-\int_{\R} p''(\bar{\rho})\bar{\rho}_{x}V_{xxx}V_{txx}dx -\mu \int_{\R}\bar{\rho}_{t}\Phi_{xx} V_{xxx}dx\\
&+\frac{\mu b}{2a}\int_{\R}\bar{\rho}_{t}\Phi_{xx}^2dx+\frac{\mu D}{2a}\int_{\R} \bar{\rho}_{t}\Phi_{xxx}^2dx-\frac{\mu D}{a}\int_{\R} \bar{\rho}_{x}\Phi_{xxx} \Phi_{txx}dx,
\end{split}
\end{equation*}
\begin{equation*}
\begin{split}
J_{2}^{(2)}=\int_{\R}\left(p''(\bar{\rho})\bar{\rho}_{x}V_{x}\right)_{xx}V_{txx}dx
-\int_{\R}(V_{x}\Phi_{x})_{xx}V_{txx}dx-\frac{\mu}{b}\int_{\R} [\bar{\rho}_{x}\left(aV_{x}-b\Phi\right)]_{xx} V_{txx}dx,
\end{split}
\end{equation*}
\begin{equation*}
J_{3}^{(2)}=-\int_{\R}h_{xxx} V_{txx}dx-\int_{\R}f_{xxx} V_{txx}dx,\ \ \ \ \ J_{4}^{(2)}=\frac{\mu}{a}\int_{\R} \bar{\rho}\Phi_{txx} g_{xx}dx
\end{equation*}
and
\begin{equation*}
J_{2,\ell}=\int_{\R}\partial^{2-\ell}_{x}(p'(\bar{\rho}))\partial_{x}^{\ell}V_{xx} V_{txx}dx-\mu \int_{\R}\partial^{2-\ell}_{x}\bar{\rho}_{x}\partial^\ell_{x}\Phi V_{txx}dx-\mu\int_{\R}\partial^{2-\ell}_{x}\bar{\rho}\partial^\ell_{x}\Phi_{x} V_{txx}dx.
\end{equation*}
It is similar to estimate $J_{4}^{(2)}$ as in Lemma \ref{sec.le4.1} that
	\begin{equation*}
		\begin{split}
			\frac{\mu}{a}\int_{\R}\bar{\rho}\Phi_{xxt} g_{xx} dx=&\frac{d}{dt}\left(\frac{\mu}{a}\int_{\R}\bar{\rho}\Phi_{xx}g_{xx} dx\right)-\frac{\mu}{a}\int_{\R}\bar{\rho}_{t}\Phi_{xx}g_{xx}dx-\frac{\mu}{a}\int_{\R}\bar{\rho}\Phi_{xx}g_{xxt} dx \\[2mm]
			\leq &\frac{d}{dt}\left(\frac{\mu}{a}\int_{\R}\bar{\rho}\Phi_{xx} g_{xx} dx\right)+C(1+t)^{-1}\int_{\R}\Phi_{xx}^2 dx+C(1+t)\int_{\R} \left[ g_{xxt}^2+\bar{\rho}_{t}^2g_{xx}^{2}\right] dx\\[2mm]
			\leq &\frac{d}{dt}\left(\frac{\mu}{a}\int_{\R}\bar{\rho}\Phi_{xx} g_{xx} dx\right)+C(1+t)^{-1}\int_{\R}\Phi_{xx}^2 dx+C\delta_{0}(1+t)^{-\frac{9}{2}}.
		\end{split}
	\end{equation*}
	Then, by a direct calculation, we have
	\begin{equation}\label{t7c}
		\begin{split}
			&\frac{1}{2}\frac{d}{dt}\int_{\R} V_{xxt}^2dx+\frac{1}{2}\frac{d}{dt}\int_{\R}\left[p'(\bar{\rho})V_{xxx}^2-2\mu\bar{\rho}\Phi_{xx}V_{xxx} +\frac{\mu b\bar{\rho}}{a}\Phi_{xx}^2\right]dx\\[2mm]
			&+\frac{D\mu }{2a}\frac{d}{dt}\int_{\R}\bar{\rho}\Phi_{xxx}^2dx+\frac{\mu }{a}\int_{\R}\bar{\rho}\Phi_{xxt}^2dx+\alpha\int_{\R}V_{xxt}^2dx\\[2mm]
			\leq & \frac{1}{2}\int_{\R}\left[\alpha V_{xxt}^2+\frac{\mu \bar{\rho}}{a}\Phi_{xxt}^2\right]dx+C(1+t)^{-1}\int_{\R}[V_{xxx}^2+\Phi_{xx}^{2}+\Phi_{xxx}^2+V_{xt}^2]dx\\
			&+C(1+t)^{-2}\int_{\R}[V_{xx}^2+\Phi_{x}^{2}]dx+C(1+t)^{-3}\int_{\R}[V_{x}^2+V_{t}^2+\Phi^2]dx\\
			&+C\delta_{0}(1+t)^{-\frac{9}{2}}+\frac{d}{dt}\left(\frac{\mu}{a}\int_{\R}\bar{\rho}\Phi_{xx} g_{xx} dx\right)\\
			&+\frac{1}{2}\frac{d}{dt}\int_{\R}\frac{(V_t+\frac{1}{\alpha}(q(\bar{\rho}))_x)^2}{(V_x+\bar{\rho})^2}V_{xxx}^2dx-\frac{1}{2}\frac{d}{dt}\int_{\R}[p'(V_{x}+\bar{\rho})-p'(\bar{\rho})]V_{xxx}^{2}dx.
		\end{split}
	\end{equation}

{\bf Step 2.}  Differentiating \eqref{va1d1f} with respect to $x$, we obtain
\begin{equation}\label{va1d1fx}
	\begin{cases}
		\begin{split}
			&V_{xtt}-(p'(\bar{\rho})V_x)_{xx}+\alpha V_{xt}+\mu(\bar{\rho}\Phi_x)_{x}=-(\mu V_x\Phi_x+\mu V_x\bar{\phi}_x)_{x}-h_{xx}-f_{xx},\\[2mm]
			&\Phi_{xt}-D\Phi_{xxx}-aV_{xx}+b\Phi_x=g_x.
		\end{split}
	\end{cases}
\end{equation}

Multiplying  $\eqref{va1d1fx}_{1}$ with $-V_{xxx}$ and multiplying $\eqref{va1d1fx}_{2}$ with  $-\frac{\mu\bar{\rho}}{a}\Phi_{xxx}$, we have

\begin{equation}\label{t6}
\begin{split}
&\frac{d}{dt}\int_{\R}\left( \frac{\alpha}{2}V_{xx}^2+V_{xxt}V_{xx}\right)dx+\frac{\mu}{2a}\frac{d}{dt}\int_{\R}\bar{\rho}\Phi_{xx}^2dx\\[2mm]
&+\int_{\R}\left[p'(\bar{\rho})V_{xxx}^2-2\mu \bar{\rho}\Phi_{xx}V_{xxx}+\frac{b\mu\bar{\rho}}{a}\Phi_{xx}^2\right]dx+\frac{\mu D}{a}\int_{\R}\bar{\rho} \Phi_{xxx}^2dx\\[2mm]
=&\int_{\R}V_{xxt}^2dx-2\int_{\R}p''(\bar{\rho})\bar{\rho}_{x}V_{xx}V_{xxx}dx-\int_{\R}(p'(\bar{\rho}))_{xx}V_{x}V_{xxx}dx\\[2mm]
&+\mu\int_{\R}(V_{x}\Phi_{x})_{x}V_{xxx}dx+\mu\int_{\R}(V_{x}\bar{\phi}_{x})_{x}V_{xxx}dx+\mu\int_{\R}\bar{\rho}_{x}\Phi_{x}V_{xxx}dx\\[2mm]
&+\int_{\R}(h+f)_{xx}V_{xxx}dx+\frac{\mu}{2a}\int_{\R}\bar{\rho}_{t}\Phi_{xx}^2dx-\frac{\mu}{a}\int_{\R}\bar{\rho}_{x}\Phi_{xt}\Phi_{xx}dx\\[2mm]
&+\mu\int_{\R}\bar{\rho}_{x}\Phi_{xx}V_{xx}dx-\frac{b\mu}{a}\int_{\R}\bar{\rho}_{x}\Phi_{x}\Phi_{xx}dx-\frac{\mu}{a}\int_{\R}\bar{\rho}g_{x}\Phi_{xxx}dx.
\end{split}
\end{equation}
Integrating the last term  by parts twice, and then we have
\begin{equation*}
\begin{split}
-\frac{\mu}{a}\int_{\R}\bar{\rho}g_{x}\Phi_{xxx}dx=&-\frac{\mu}{a}\int_{\R}\bar{\rho}g_{xxx}\Phi_{x} dx-\frac{\mu}{a}\int_{\R}\bar{\rho}_{xx}g_{x}\Phi_{x} dx-\frac{2\mu}{a}\int_{\R}\bar{\rho}_{x}g_{xx}\Phi_{x} dx \\
\leq &C\delta_{0}(1+t)^{-1}\int_{\R}\Phi_{x}^2 dx +\frac{C}{\delta_{0}}(1+t)\int_{\R}\left( g_{xxx}^2+\bar{\rho}^2_{xx}g^2_{x}+\bar{\rho}^2_{x}g^2_{xx}\right)dx\\
\leq &C\delta_{0}(1+t)^{-1}\int_{\R}\Phi_{x}^2 dx +C\delta_{0}(1+t)^{-\frac{7}{2}},
\end{split}
\end{equation*}
which along with \eqref{t6} yields
\begin{equation*}
\begin{split}
&\frac{d}{dt}\int_{\R}\left( \frac{\alpha}{2}V_{xx}^2+V_{xxt}V_{xx}\right)dx+\frac{\mu}{2a}\frac{d}{dt}\int_{\R}\bar{\rho}\Phi_{xx}^2dx\\[2mm]
&+\int_{\R}\left[p'(\bar{\rho})V_{xxx}^2-2\mu \bar{\rho}\Phi_{xx}V_{xxx}+\frac{b\mu\bar{\rho}}{a}\Phi_{xx}^2\right]dx+\frac{\mu D}{a}\int_{\R}\bar{\rho} \Phi_{xxx}^2dx\\[2mm]
\leq &2\int_{\R}V_{xxt}^2dx+C\varepsilon_{0}\int_{\R}[V_{xxx}^{2}+\Phi_{xx}^2]dx+C\delta_{0}(1+t)^{-\frac{7}{2}}\\
 &+C(1+t)^{-1}\int_{\R}[V_{xx}^{2}+\Phi_{x}^2+\Phi_{xt}^2+V_{xt}^2]dx+C(1+t)^{-2}\int_{\R}[V_{x}^{2}+V_{t}^2]dx.
\end{split}
\end{equation*}

{\bf Step 3.} Multiplying $(1+t)^j$ $(j=1,2)$ to $[\D2\times\eqref{t6}+\frac{\alpha}{4}\times\eqref{t7c}]$ and integrate the result over  $[0,t]$, we get by Lemma \ref{sec.le4.1} and Lemma \ref{sec.le4.2} that
\begin{equation}\label{le4.3.ste31}
\begin{split}
&(1+t)^2\|[V_{xx},V_{xxt},V_{xxx},\Phi_{xx},\Phi_{xxx}]\|^2+\int_{0}^{t}(1+s)^2\|[V_{xxx},V_{xxt},\Phi_{xx},\Phi_{xxx},\Phi_{xxt}]\|^2ds\\
\leq &C\left(\norm{V_0}^2_3+\norm{M_0}^2_2+\norm{\Phi_0}^2_3+\delta_{0}\right).
\end{split}
\end{equation}
Furthermore integrating \eqref{t7c} multiplied by $(1+t)^{j+1}$ $(j=1,2)$ over $[0,t]$, we obtain from \eqref{le4.3.ste31} that
\begin{equation*}
\begin{split}
& (1+t)^3\|[V_{xxt},V_{xxx},\Phi_{xx},\Phi_{xxx}]\|^2+\int_{0}^{t}(1+s)^3\|[V_{xxt},\Phi_{xxt}]\|^2ds\\
\leq & C\left(\norm{V_0}^2_3+\norm{M_0}^2_2+\norm{\Phi_0}^2_3+\delta_{0}\right).
\end{split}
\end{equation*}
Here, for $j=1,2$, we have used the following estimates
\begin{equation*}
	\begin{split}
		(1+t)^{j}\frac{\mu}{a}\int_{\R}\bar{\rho}\Phi_{xx}g_{xx} dx\leq &C\delta_{0}(1+t)^{j}\int_{\R}\Phi_{xx}^{2}dx+\frac{C}{\delta_{0}} (1+t)^j\int_{\R}g_{xx}^{2}dx\\
		\leq & \delta_{0}(1+t)^{j}\int_{\R}\Phi_{x}^{2}dx+C\delta_{0}(1+t)^{-\frac{5}{2}+j}\\
		\leq & \delta_{0}(1+t)^{j}\int_{\R}\Phi_{x}^{2}dx+C\delta_{0},
	\end{split}
\end{equation*}

\begin{equation*}
	\begin{split}
		\frac{\mu}{a}\int_{0}^t(1+s)^{j-1}\int_{\R}\bar{\rho}\Phi_{xx}g_{xx} dxds\leq &C\int_{0}^t(1+s)^{j-1}\int_{\R}\Phi_{xx}^{2}dx+C\int_{0}^t(1+s)^{j-1}\int_{\R}g_{xx}^{2}dxds\\
		\leq &C\int_{0}^t(1+s)^{j-1}\int_{\R}\Phi_{xx}^{2}dx+C\int_{0}^t(1+s)^{-\frac{9}{2}+j}ds\\
		\leq &C\left(\norm{V_0}^2_3+\norm{M_0}^2_2+\norm{\Phi_0}^2_3+\delta_{0}\right),
	\end{split}
\end{equation*}

\begin{equation*}
	\begin{split}
		(1+t)^{j+1}\frac{\mu}{a}\int_{\R}\bar{\rho}\Phi_{xx}g_{xx} dx\leq &C\delta_{0}(1+t)^{j+1}\int_{\R}\Phi_{xx}^{2}dx+\frac{C}{\delta_{0}} (1+t)^{j+1}\int_{\R}g_{xx}^{2}dx\\
		\leq & \delta_{0}(1+t)^{j+1}\int_{\R}\Phi_{x}^{2}dx+C\delta_{0}(1+t)^{-\frac{5}{2}+j}\\
		\leq & \delta_{0}(1+t)^{j+1}\int_{\R}\Phi_{x}^{2}dx+C\delta_{0}
	\end{split}
\end{equation*}
and
\begin{equation*}
	\begin{split}
		\frac{\mu}{a}\int_{0}^t(1+s)^{j}\int_{\R}\bar{\rho}\Phi_{xx}g_{xx} dxds\leq &C\int_{0}^t(1+s)^{j}\int_{\R}\Phi_{xx}^{2}dx+C\int_{0}^t(1+s)^{j}\int_{\R}g_{xx}^{2}dxds\\
		\leq &C\int_{0}^t(1+s)^{j}\int_{\R}\Phi_{xx}^{2}dx+C\int_{0}^t(1+s)^{-\frac{7}{2}+j}ds\\
		\leq &C\left(\norm{V_0}^2_3+\norm{M_0}^2_2+\norm{\Phi_0}^2_3+\delta_{0}\right).
	\end{split}
\end{equation*}
Thus, we complete the proof of Lemma \ref{sec.le4.3}.
\end{proof}

Similar to Lemma \ref{sec.le4.3}, we can get the following higher-order estimates for which we only outline the procedures without details for brevity.
\begin{lemma} \label{sec.le4.4}
	Under the assumption of   Proposition \ref{1.1}, we have
	\begin{equation}\label{refine1}
		\begin{split}
			(1+t)^3&\|[V_{tt},V_{xt},\Phi_{t},\Phi_{xt}]\|^2+(1+t)^2\|V_{t}\|^2\\[2mm]
			&+\int_{0}^{t}(1+s)^3\|[V_{tt},\Phi_{tt}]\|^2+(1+t)^2\|[V_{xt},\Phi_{xt},\Phi_{t}]\|^2ds\\
			\leq &C\left(\norm{V_0}^2_3+\norm{M_0}^2_2+\norm{\Phi_0}^2_3+\delta_{0}\right)
		\end{split}
	\end{equation}
	and
	
	\begin{equation}\label{refine2}
		\begin{split}
			(1+t)^4&\|[V_{xtt},V_{xxt},\Phi_{xt},\Phi_{xxt}]\|^2+(1+t)^3\|V_{xt}\|^2\\[2mm]
			&+\int_{0}^{t}(1+s)^4\|[V_{xtt},\Phi_{xtt}]\|^2ds+(1+t)^3\|[V_{xxt},\Phi_{xt},\Phi_{xxt}]\|^2ds\\
			\leq &C\left(\norm{V_0}^2_3+\norm{M_0}^2_2+\norm{\Phi_0}^2_4+\delta_{0}\right).
		\end{split}
	\end{equation}
	 \end{lemma}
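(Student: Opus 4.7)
The plan is to mimic the three-step structure of Lemma \ref{sec.le4.3} verbatim, but with one (respectively two) time derivative replacing one (respectively two) spatial derivative in the test functions. Concretely, for \eqref{refine1} I would first apply $\partial_{t}$ to the system \eqref{va1d1f} to obtain
\begin{equation*}
\begin{cases}
V_{ttt}-(p'(\bar{\rho})V_x)_{xt}+\alpha V_{tt}+\mu(\bar{\rho}\Phi_x)_{t}
=-(\mu V_x\Phi_x+\mu V_x\bar{\phi}_x+\mu\bar{\rho}_x\Phi)_{t}\big|_{\text{reorg.}}-h_{xt}-f_{xt},\\[1mm]
\Phi_{tt}-D\Phi_{xxt}-aV_{xt}+b\Phi_t=g_t,
\end{cases}
\end{equation*}
and then repeat the Step 1 / Step 2 / Step 3 procedure: multiply the first equation by $V_{tt}$ and the second by $\frac{\mu\bar\rho}{a}\Phi_{tt}$ (this corresponds to the analogue of \eqref{vxtdec1}), then multiply the first by $-V_{xxt}$ and the second by $-\frac{\mu\bar\rho}{a}\Phi_{xxt}$ (the analogue of \eqref{t3c}), combine the two identities with a suitable positive weight $\alpha/4$, and finally multiply the resulting differential inequality by $(1+t)^{j}$ with $j=2,3$ in succession and integrate in time, exploiting Lemmas \ref{sec.le4.1}--\ref{sec.le4.3} to absorb all lower-order terms appearing on the right-hand side.

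The analysis of the nonlinear remainders is routine: by \eqref{Defhf} every derivative of $h$ or $f$ is a polynomial in $V_{x},V_{t},\bar\rho_{x},\bar\rho_{t}$ and their derivatives, so all such terms are controlled by $C(N(T)+\delta_{0})(\|V_{x}\|_{3}^{2}+\|V_{t}\|_{2}^{2}+\|\Phi\|_{3}^{2})$ plus a fast-decaying $\delta_{0}(1+t)^{-\text{large}}$ piece coming from Lemma \ref{prodecay}. For the two linear couplings $aV_{x}$ and $\mu\bar\rho\Phi_{x}$ the positive-definiteness \eqref{repeatu} is used exactly as before to yield
\begin{equation*}
\int_{\R}\bigl[p'(\bar{\rho})(\partial_{t}^{\ell}V_{x})^{2}-2\mu\bar\rho\,\partial_{t}^{\ell}\Phi\,\partial_{t}^{\ell}V_{x}+\tfrac{\mu b}{a}\bar\rho(\partial_{t}^{\ell}\Phi)^{2}\bigr]\,dx\sim\|[\partial_{t}^{\ell}\Phi,\partial_{t}^{\ell}V_{x}]\|^{2}.
\end{equation*}

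The only genuinely new difficulty is the forcing term $g=-\bar{\phi}_{t}+D\bar{\phi}_{xx}$ and its time derivatives appearing in the $\Phi$-equation. As in Lemma \ref{sec.le4.1}, $\|g_t\|^{2}$ alone decays only like $(1+t)^{-7/2}$, which is not fast enough to pair with the $(1+t)^{3}$ weight directly. The remedy is the same trick used there: rewrite
\begin{equation*}
\frac{\mu}{a}\int_{\R}\bar{\rho}\,\Phi_{tt}\,g_{t}\,dx
=\frac{d}{dt}\Bigl(\frac{\mu}{a}\int_{\R}\bar{\rho}\,\Phi_{t}\,g_{t}\,dx\Bigr)
-\frac{\mu}{a}\int_{\R}\bar{\rho}_{t}\Phi_{t}g_{t}\,dx-\frac{\mu}{a}\int_{\R}\bar{\rho}\,\Phi_{t}\,g_{tt}\,dx,
\end{equation*}
transferring a $t$-derivative onto $g$ to pick up the extra power of $(1+t)^{-1}$ provided by Lemma \ref{prodecay}, and then splitting by Cauchy--Schwarz so that $\|\Phi_{t}\|^{2}$ is absorbed into the left-hand side using Lemma \ref{sec.le4.1}, while $\|g_{tt}\|^{2}+\|\bar\rho_{t}g_{t}\|^{2}\le C\delta_{0}(1+t)^{-9/2}$ is integrable against all needed weights. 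The same device applied to $g_{xt}$ and $g_{xtt}$ yields \eqref{refine2}, whose proof is obtained by applying $\partial_{x}\partial_{t}$ to \eqref{va1d1f} and repeating the scheme with test functions $V_{xtt}$, $-V_{xxxt}$ and the analogous $\Phi$-multipliers.

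The main obstacle, as in Lemma \ref{sec.le4.3}, is bookkeeping: the source term $g$ produces time-boundary contributions $(1+t)^{j}\int\bar\rho\,\partial_{t}^{\ell}\Phi\,\partial_{t}^{\ell}g\,dx$ that must be split via Cauchy--Schwarz with weight $\delta_{0}$ so the $\Phi$-part is hidden in the already-controlled dissipation $\int_{0}^{t}(1+s)^{j}\|\partial_{t}^{\ell}\Phi\|^{2}ds$ from Lemmas \ref{sec.le4.1}--\ref{sec.le4.3}, while the $g$-part is summable thanks to Lemma \ref{prodecay}. Once this is organised, $\varepsilon_{0}$- and $\delta_{0}$-smallness absorb all residual cubic and cross terms, yielding \eqref{refine1} and \eqref{refine2}.
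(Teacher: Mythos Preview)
Your overall strategy --- differentiate the system in $t$, run the two-multiplier scheme of Lemmas \ref{sec.le4.1}--\ref{sec.le4.3}, and handle the forcing $g$ by trading a time derivative onto it --- is exactly what the paper does. Your treatment of the $g$-term is also the right idea. The gap is in your choice of the \emph{second} multiplier.

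For \eqref{refine1} you propose to pair the $V_{tt}$-multiplier (correct) with $-V_{xxt}$ as the ``analogue of \eqref{t3c}''. But \eqref{t3c} in Lemma \ref{sec.le4.2} comes from testing the \emph{undifferentiated} system against $-V_{xx}$, which after one integration by parts is the same as testing $\partial_x(\text{sys})$ against $V_x$: one derivative \emph{below} the Step~1 multiplier $V_{xt}$, not one above. The correct time-analogue is therefore to test $\partial_t(\text{sys})$ against $V_t$ and $\frac{\mu\bar\rho}{a}\Phi_t$, which is precisely what the paper's outline prescribes. This choice produces the dissipation of the positive-definite form in $(V_{xt},\Phi_t)$ plus $\|\Phi_{xt}\|^2$, with bad term $\|V_{tt}\|^2$ --- absorbable by Step~1. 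Your multiplier $-V_{xxt}$ instead produces dissipation of $(V_{xxt},\Phi_{xt},\Phi_{xxt})$ and bad term $\|V_{xtt}\|^2$; this neither matches Step~1's damping $\|V_{tt}\|^2$ nor supplies the crucial dissipation $\int(1+s)^2\|\Phi_t\|^2\,ds$, which is \emph{not} available from Lemmas \ref{sec.le4.1}--\ref{sec.le4.3} and cannot be recovered from the $\Phi$-equation since $\int(1+s)^2\|V_x\|^2\,ds$ diverges. The $(1+t)^3$-weighted estimate therefore does not close.

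The same mis-identification recurs for \eqref{refine2}: the paper tests $\partial_{xt}(\text{sys})$ against $V_{xt},\Phi_{xt}$, whereas your proposed multiplier $-V_{xxxt}$ would require $V_t\in H^3$, one derivative more than the $H^2$ regularity available. Replacing your Step~2 multipliers by $V_t,\Phi_t$ (for \eqref{refine1}) and $V_{xt},\Phi_{xt}$ (for \eqref{refine2}) fixes both issues, and the rest of your sketch then goes through.
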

\begin{proof}
	In fact, taking  integration over $\R\times (0,t)$ of equations $(1+t)^{i}\times \{2 [\partial_{t}\eqref{va1d1fxk}_{1}\times V_{tt}+\partial_{t}\eqref{va1d1fxk}_{2}\times \frac{\mu \bar{\rho}}{a}\Phi_{tt}]+\frac{\alpha}{4}[\partial_{t}\eqref{va1d1fxk}_{1}\times V_{t}+\partial_{t}\eqref{va1d1fxk}_{2}\times \frac{\mu \bar{\rho}}{a}\Phi_{t}]$\}, and  $(1+t)^{i+1}\times [\partial_{t}\eqref{va1d1fxk}_{1}\times V_{tt}+\partial_{t}\eqref{va1d1fxk}_{2}\times \frac{\mu \bar{\rho}}{a}\Phi_{tt}]$ for $i=0,1,2$, we get \eqref{refine1}. Taking  integration over $\R\times (0,t)$ of equations $(1+t)^{j}\times \{2 [\partial_{xt}\eqref{va1d1fxk}_{1}\times V_{xtt}+\partial_{xt}\eqref{va1d1fxk}_{2}\times \frac{\mu \bar{\rho}}{a}\Phi_{xtt}]+\frac{\alpha}{4}[\partial_{xt}\eqref{va1d1fxk}_{1}\times V_{xt}+\partial_{t}\eqref{va1d1fxk}_{2}\times \frac{\mu \bar{\rho}}{a}\Phi_{xt}]\}$, and $(1+t)^{j+1}\times [\partial_{xt}\eqref{va1d1fxk}_{1}\times V_{xtt}+\partial_{xt}\eqref{va1d1fxk}_{2}\times \frac{\mu \bar{\rho}}{a}\Phi_{xtt}]$ for $j=0,1,2,3$, we get \eqref{refine2}.
	\end{proof}

Note Proposition \ref{decay rate} is a direct consequence of Lemmas \ref{sec.le4.1}-\ref{sec.le4.4} shown
above. Thus, we can close the {\it a priori} assumptions \eqref{asstime1} by taking  $\epsilon$ to be sufficiently small in Proposition \ref{1.1}.\\
%
%

Finally we are in a position to prove  Proposition \ref{1.1} and Theorem \ref{1-1}.\\

\noindent {\bf Proof of Proposition \ref{1.1}}. The first part of Proposition \ref{1.1} (global existence) is a consequence of Proposition \ref{local} and Proposition \ref{mainpro}. For the decay rate, we  have from Proposition \ref{decay rate} directly. \\

\noindent {\bf Proof of Theorem \ref{1-1}}. Notice that $M(x,t)=-V_t(x,t)$ and the transformation \eqref{221}, we get \eqref{infinitydecay} from \eqref{decayn} by the Sobolev inequality $\|f\|_{L^\infty(\R)}^2 \leq 2 \|f\|_{L^2(\R)}\|f_x\|_{L^2(\R)}$ and hence complete the proof.

\bigbreak \noindent \textbf{Acknowledgement}.
Q.Q. Liu was supported by the National Natural Science
Foundation of China (No. 12071153), Guangdong Basic and Applied Basic Research Foundation (No. 2021A1515012360) and the Fundamental Research Funds for the Central Universities (No. 2020ZYGXZR032). H.Y. Peng support from the National Natural Science Foundation
of China No. 11901115 and Natural Science Foundation of Guangdong Province (No.2019A1515010706). Z.A. Wang was supported in part by
the Hong Kong RGC GRF grant No. PolyU 15304720.


\end{document}